\def\arXiv{1} 
\newcommand{\notarxiv}[1]{foo}
\newcommand{\arxiv}[1]{ba}
\renewcommand{\arxiv}[1]{#1}%
\renewcommand{\notarxiv}[1]{\ignorespaces}%
\renewcommand{\arxiv}[1]{\ignorespaces}%
\renewcommand{\notarxiv}[1]{#1}%
	\declaretheorem[name=Lemma,sibling=theorem]{lem}
	\declaretheorem[name=Proposition,sibling=theorem]{prop}
	\declaretheorem[name=Corollary,sibling=theorem]{corollary}
	\declaretheorem[name=Assumption,sibling=theorem]{assumption}
	\declaretheorem[name=Definition,sibling=theorem]{definition}
	\theoremstyle{plain}
	\newtheorem{theorem}{Theorem}
	\newtheorem{lem}{Lemma}
	\newtheorem{assumption}{Assumption}
	\newtheorem{prop}{Propostion}
	\newtheorem{corollary}{Corollary}
	\theoremstyle{definition}
	\newtheorem*{example*}{Example}
\Crefname{assumption}{Assumption}{Assumptions}
\Crefname{lem}{Lemma}{Lemmas}
\Crefname{prop}{Proposition}{Propositions}
\DeclarePairedDelimiter{\abs}{\lvert}{\rvert} %
\DeclarePairedDelimiter{\brk}{[}{]}
\DeclarePairedDelimiter{\crl}{\{}{\}}
\DeclarePairedDelimiter{\prn}{(}{)}
\DeclarePairedDelimiter{\norm}{\|}{\|}
\DeclarePairedDelimiter{\ceil}{\lceil}{\rceil}
\DeclarePairedDelimiter{\floor}{\lfloor}{\rfloor}
\newcommand{\overeq}[1]{\overset{#1}{=}}
\newcommand{\overle}[1]{\overset{#1}{\le}}
\NewDocumentCommand\Ex{s O{} m }{%
	\mathbb{E}%
	\begingroup
	\IfBooleanTF{#1}
	{\ExInn*{#3}}
	{\ExInn[#2]{#3}}%
	\endgroup
}
\DeclarePairedDelimiterX\ExInn[1]{[}{]}{%
	\activatebar
	#1%
}
\RenewDocumentCommand\Pr{sO{}r()}{%
	\mathbb{P}%
	\begingroup
	\IfBooleanTF{#1}
	{\PrInn*{#3}}
	{\PrInn[#2]{#3}}%
	\endgroup
}
\DeclarePairedDelimiterX\PrInn[1](){%
	\activatebar
	#1%
}
\newcommand{\activatebar}{%
	\begingroup\lccode`~=`|
	\lowercase{\endgroup\def~}{\;\delimsize\vert\;}%
	\mathcode`|=\string"8000
}
\newcommand\numberthis{\addtocounter{equation}{1}\tag{\theequation}}
\newcommand{\sm}{S} %
\newcommand{\R}{\mathbb{R}} %
\newcommand{\N}{\mathbb{N}} %
\newcommand{\E}{\mathbb{E}} %
\renewcommand{\P}{\mathbb{P}}	%
\providecommand{\abs}{\mathop{\rm abs}}
\newcommand{\half}{\frac{1}{2}}
\newcommand{\defeq}{\coloneqq}
\newcommand{\grad}{\nabla}
\newcommand{\xset}{\mathcal{X}}
\newcommand{\inner}[2]{\left<#1,#2\right>}
\newcommand{\veps}{\varepsilon}
\newcommand{\etalo}{\eta_{\mathrm{lo}}}
\newcommand{\etainit}{\eta_\varepsilon}
\newcommand{\etahi}{\eta_{\mathrm{hi}}}
\newcommand{\etamid}{\eta_{\mathrm{mid}}}
\newcommand{\etaout}{\eta_{\mathrm{o}}}
\newcommand{\etamax}{\eta_{\max}}
\newcommand{\etalofinal}{\etalo^\star}
\newcommand{\etahifinal}{\etahi^\star}
\newcommand{\reps}{r_\varepsilon}
\newcommand{\gdiff}[1][i]{\Delta_{#1}}
\newcommand{\G}[1][T]{G_{#1}}
\newcommand{\rbar}[1][T]{\bar{r}_{#1}}
\newcommand{\dbar}[1][T]{\bar{d}_{#1}}
\renewcommand{\d}[1][0]{d_{#1}}
\renewcommand{\r}[1][0]{r_{#1}}
\newcommand{\Gcoef}{\alpha}
\newcommand{\Ccoef}{\beta}
\newcommand{\GcoefK}[1][k]{\alpha^{(#1)}}
\newcommand{\CcoefK}[1][k]{\beta^{(#1)}}
\newcommand{\ev}[1][T,\Gcoef,\Ccoef]{\mathfrak{E}_{#1}}
\newcommand{\proj}{\Pi_1}
\newcommand{\filt}{\mathcal{F}}
\NewDocumentCommand{\llft}{ O{\delta} O{t} }{\lambda_{#2}(#1)}
\newcommand{\bt}{\phi}
\newcommand{\rootfinding}{\textsc{RootFindingBisection}}
\newcommand{\opt}{_\star}
\newcommand{\xopt}{x\opt}
\title[Making SGD Parameter-Free]{Making SGD Parameter-Free}
	\title{Making SGD Parameter-Free}
	\author{Yair Carmon\\
		\href{mailto:ycarmon@tauex.tau.ac.il}{\texttt{ycarmon@cs.tau.ac.il}}\and Oliver Hinder \\
		\href{ohinder@pitt.edu}{\texttt{ohinder@pitt.edu}}
}
\date{}
\begin{document}

\maketitle

\begin{abstract}%
We develop an algorithm for parameter-free stochastic convex optimization (SCO) whose rate of convergence is only a double-logarithmic factor larger than the optimal rate for the corresponding known-parameter setting. In contrast, the best previously known rates for parameter-free SCO are based on online parameter-free regret bounds, which contain unavoidable excess logarithmic terms compared to their known-parameter counterparts. Our algorithm is conceptually simple, has high-probability guarantees, and is also partially adaptive to unknown gradient norms, smoothness, and strong convexity. At the heart of our results is a novel parameter-free certificate for SGD step size choice, and a time-uniform concentration result that assumes no a-priori bounds on SGD iterates.
\end{abstract}

\section{Introduction}

\newcommand{\comparator}{\mathring{x}}

Stochastic convex optimization (SCO) is a cornerstone of both the theory and practice of machine learning. Consequently, there is intense interest in developing SCO algorithms that require little to no prior knowledge of the problem parameters, and hence little to no tuning~\cite{orabona2014simultaneous,mcmahan2017survey,kavis2019unixgrad,asi2019stochastic,loizou2021stochastic,vaswani2019painless}.
In this work we consider the fundamental problem of non-smooth 
SCO (in a potentially unbounded domain) and seek methods that are adaptive to a key problem parameter: the initial distance to optimality.

Current approaches for tackling this problem focus on the more general online learning problem of \emph{parameter-free regret minimization}  \cite{chaudhuri2009parameter,cutkosky2019artificial,cutkosky2017online,cutkosky2018black,kempka2019adaptive,mcmahan2014unconstrained,mhammedi2020lipschitz,orabona2016coin,orabona2017training,streeter2012no}, where the goal is to to obtain regret guarantees that are valid for comparators with arbitrary norms.
Research on parameter-free regret minimization has lead to practical algorithms
for stochastic optimization \cite{chen2022better,orabona2014simultaneous,orabona2017training}, methods that are able to adapt to many problem
parameters simultaneously  \cite{streeter2012no} and methods that
can work with any norm \cite{cutkosky2018black}.
In the basic Euclidean setting with 1-Lipschitz losses where only the initial distance
to optimality is unknown, there are essentially matching upper \cite{mcmahan2014unconstrained} and lower bounds \cite{orabona2013dimension}, showing that the best achievable parameter-free \emph{average} regret scales as 
\begin{flalign}\label{eq:parameter-free-regret}
	O\left( \| \comparator \| \sqrt{\frac{1}{T} \log\left( \frac{T \| \comparator \|^2}{\varepsilon^2} + 1 \right)} + \frac{\varepsilon}{T} \right)
\end{flalign}
where $T$ is the number of steps, 
$\| \comparator \|$ is the (Euclidean) comparator norm, and $\varepsilon > 0$ represents the (user-chosen) regret we will incur even if the comparator
norm is zero. This is larger by a logarithmic factor than the optimal average-regret when the comparator norm is known in advance.

Parameter-free regret bounds immediately translate into parameter-free SCO algorithms using online-to-batch conversion  \cite{hazan2016introduction}.
The expected optimality gap bound of the resulting algorithm is identical to~\eqref{eq:parameter-free-regret} when we replace $\comparator$ by $\xopt-x_0$, i.e., the difference between the optimum and the initial point. This bound is a logarithmic factor worse than what stochastic gradient descent (SGD) can achieve when we know the distance to optimality and use it to compute step sizes. While this logarithmic factor is unavoidable for regret minimization, it is unclear if it is necessary for SCO.

In this paper we show it is possible to obtain stronger parameter-free rates for SCO by moving beyond the regret minimization abstraction. In particular, for any $\varepsilon>0$ and $\delta\in(0,1)$, we a obtain probability $1-\delta$ optimality gap bounds of
\begin{equation*}
	O\prn*{
	\prn*{\frac{\norm{x_0-\xopt}}{\sqrt{T}} + \frac{\varepsilon}{T}} \log^{2} \prn*{\frac{1}{\delta}\log_+ \frac{T \norm{x_0-\xopt}}{\varepsilon}} },
\end{equation*}
which is better than any bound achievable by online-to-batch conversion. While 
replacing the logarithmic factor by a double-logarithmic factor may appear a small improvement, we consider it important due to the fundamental nature of the problem as well as the theoretical separation it establishes between parameter-free SCO and OCO. Such separations are rare in the literature; we are only aware of one prior example~\cite{hazan2014beyond}.

Our method also provides high probability guarantees on the suboptimality
gap. This resolves an open problem in parameter-free optimization; see \cite{orabona2014simultaneous} and \cite[\S7]{onlineLearningICMLtutorial}.
We are able to form high probability bounds because, unlike other
parameter-free SCO algorithms, we prove a strong localization guarantee: our output $\bar{x}$ satisfies $\norm{\bar{x}-\xopt} = O(\norm{x_0-\xopt})$, and key intermediate points satisfy a similar bound as well.
We suspect that such localization is difficult to establish with online-to-batch conversion, since online parameter-free algorithms  may need to let their iterates fluctuate wildly in order to handle difficult adversaries.

In addition to independence of $\norm{x_0-\xopt}$, our algorithm exhibits three additional forms of adaptivity. First, our algorithm has adaptivity to gradient norms on par with the best existing parameter-free result~\cite{cutkosky2018black}: the leading term of our bounds scales with a sum of squared observed gradient norms, and an a-priori gradient norm bound only affects low-order terms. Second, as a  consequence, in the smooth and noiseless case our algorithm exhibits a $\frac{\log\log T}{T}$ rate of convergence. Finally, via a simple restart scheme we obtain the optimal rate for strongly-convex stochastic problems (up to double-logarithmic factors), without knowledge of the strong-convexity parameter.

On a technical level, our development differs significantly from prior  parameter-free optimization methods. While online methods rely on advanced tools such as coin betting \cite{orabona2016coin}, and online Newton steps \cite{cutkosky2018black}, %
our approach is essentially a careful scheme for correctly setting the step size of SGD.
Underlying our algorithm is a parameter-free certificate for SGD, which implies both localization and optimality gap bounds. The certificate takes the form of an implicit equation over the SGD step size, which we solve via bisection on the logarithm of the step size. To obtain high-probability bounds, we develop a time-uniform empirical-Berstein-type concentration bound independent of any a-priori assumptions on the iterate norms. 
Given the ubiquity of SGD in practice and in the classroom, our insights
on how choose its step size may be of independent interest.

\paragraph{Paper organization.} In the following subsections we review additional related work, as well as the problem setup and notation. Section~\ref{sec:algorithm} develops our parameter-free step size certificate. 
Section~\ref{sec:deterministic} presents our algorithm and its analysis in the  noiseless regime. Section~\ref{sec:stochastic} lifts the analysis to the stochastic setting, proving our main result on parameter-free SCO. Finally, 
Section~\ref{sec:adaptivity} shows how our method adapts to smoothness and (via restarts) to strong convexity.

\subsection{Additional related work}

\paragraph{Parameter-free methods from deterministic optimization.}
The literature on noiseless optimization also offers a rich variety of parameter-free algorithms. In the smooth setting, the Armijo rule \cite{armijo1966minimization} is a standard technique for choosing step sizes for gradient descent. 
Using variants of this idea combined with acceleration, 
achieves essentially optimal and parameter-free rates of convergence~\cite{beck2009fast}. The Polyak step size rule~\cite{polyak1987} simultaneously achieves optimal rates for smooth, non-smooth and strongly-convex optimization~\cite{hazan2019revisiting}, but requires knowledge of the optimal function value. This requirement can be relaxed, making the Polyak method parameter-free, but at the cost of a multiplicative logarithmic factor to its bound~\cite{hazan2019revisiting}. Consequently, non-smooth parameter-free deterministic optimization appears to be as hard as SCO. Multiple works generalize line-search and the Polyak method to the stochastic setting \cite{rolinek2018l4,berrada2020training,asi2019stochastic,loizou2021stochastic,vaswani2019painless,zhang2020statistical,byrd2012sample}, but do not obtain  parameter-free rates in the sense we consider here.

\paragraph{Limitations of online-to-batch conversion.}
To the best of our knowledge, the only previous example of an SCO rate that is provably unachievable by online to batch conversion of a (uniform) regret bound occurs for strongly-convex optimization. Specifically, any online strongly-convex optimization algorithm must have logarithmic regret (implying suboptimality $(\log T)/T$ via online to batch conversion)~\cite{takimoto2000minimax,hazan2014beyond}, while  \citet{hazan2014beyond} 
and others~\cite{juditsky2014primal,ghadimi2012optimal,shamir2013stochastic} have achieved the optimal $1/T$ rate for stochastic strongly-convex optimization. The variant of our algorithm in \Cref{sec:adaptivity-sc} is based on the Epoch-SGD algorithm of \cite{hazan2014beyond}, and simultantiously breaks both regret minization barriers, achieving optimality gap $(\log \log T)/T$ for parameter-free strongly-convex stochastic optimization with high 
probability.

\paragraph{Grid search.} 
In practice, the standard technique for selecting the step size of SGD  (and hyperparameters more broadly) is grid search \cite{pedregosa2011scikit,feurer2019hyperparameter}.
This typically consists of testing all step sizes on a geometrically spaced grid and choosing the one with the best performance on a held out set. Compared to our method, such grid search is computationally wasteful, as it tests exponentially more steps sizes than we do. Moreover, in the context of parameter-free SCO, proving guarantees for grid search  is surprisingly difficult, since it is unclear how to bound the objective value estimation error for points that may be arbitrarily far apart.
\subsection{Problem setup and notation}
\newcommand{\oracle}{\mathcal{O}}
\newcommand{\projX}{\Pi_{\xset}}
Let us briefly review the standard SCO setup, building up our notation along the way.
Our goal is to minimize a convex objective function $f:\xset \to \R$ defined on a closed an convex set $\xset\subseteq\R^d$ (our results hold for $\xset=\R^d$ as well). We let $\xopt$ denote a fixed minimizer of $f$, i.e., such that $f(\xopt) \le f(x)$ for all $x\in\xset$, implicitly assuming such point exists (see \Cref{sec:relaxed-xopt} for further discussion of this assumption).
We assume that our only access to $f$ is via a stochastic gradient oracle $\oracle$ that, upon receiving query point $x$, returns a vector $\oracle(x)\in\R^d$ that is a subgradient of $f$ in expectation, i.e., $\Ex{ \oracle(x) | x} \in \partial f(x)$. With slight abuse of notation, we write $\grad f(x) \defeq \Ex{ \oracle(x) | x}$, corresponding to the gradient of $f$ when it is differentiable and a particular subgradient otherwise. We interchangeably use \emph{exact gradients}, \emph{noiseless}, and \emph{deterministic} to refer to the regime where $ \oracle(x) = \grad f(x)$ with probability 1.

Our development revolves around the classical fixed step size stochastic gradient descent (SGD) algorithm. Given step size $\eta$ and initialization $x_0$, SGD iterates
\begin{equation*}
	x_{i+1}(\eta) = \projX\prn[\big]{x_{i}(\eta) - \eta g_i(\eta)}
	~~\mbox{where}~~g_i(\eta) \defeq \oracle(x_i(\eta)),
\end{equation*}
and $\projX$ is the Euclidean projection onto $\xset$; we intentionally  feature the $\eta$ dependence of $x_i(\eta)$ and $g_i(\eta)$ prominently. We define the following quantities associated with the SGD iterates. First, we write the distance to $\xopt$ and its running maximum as
\begin{equation*}
	\d[t](\eta) \defeq \norm{x_t(\eta)-\xopt}~~\mbox{and}~~
	\dbar[t](\eta) \defeq \max_{i\le t}\norm{x_i(\eta)-\xopt}.
\end{equation*} 
Replacing $\xopt$ with $x_0$ in the above definitions, we write
\begin{equation}\label{eq:rbar-def}
	\r[t](\eta) \defeq \norm{x_0 - x_t(\eta)}~~\mbox{and}~~
	\rbar[t](\eta) \defeq \max_{i \le t}\norm{x_0  - x_i(\eta)}.
\end{equation}
Finally, we denote the running sum of squared gradient norms and gradient oracle error by
\begin{equation}\label{eq:G-def}
	\G[t](\eta) \defeq \sum_{i<t} \norm{g_i(\eta)}^2
	~~\mbox{and}~~
	\gdiff[i] \defeq g_i(\eta) - \grad f(x_i(\eta)).
\end{equation}

\paragraph{Additional notational conventions.} Throughout, $\norm{\cdot}$  denotes the Euclidean norm. We use $\log$ to denote the base 2 logarithm, 
and write $\log_+(x) \defeq \max\{2,\log(x)\}$ to simplify $O(\cdot)$ notation. For any particular value of $\eta$, the quantities $x_i(\eta)$, $g_i(\eta)$, etc.\ always refer to a \emph{single} realization of the random process they represent.

\section{A parameter-free step-size selection criterion for SGD}\label{sec:algorithm}
In this section we present the key component of our development: a computable certificate for the efficiency of a candidate SGD step size. For ease of exposition, in this section we restrict some of our arguments to the exact gradient setting, but emphasize that they ultimately translate to high-probability bounds in the stochastic setting.

Consider the noiseless setting with step size $\eta$, iterates $x_0, x_1(\eta),\ldots,x_T(\eta)$ and gradients $g_0$, $g_1(\eta)$, $\ldots, g_{T-1}(\eta)$. It is well-known \cite{onlineLearningICMLtutorial} that if $\eta$ satisfies
\begin{equation*}
	\eta = \bt_{\mathrm{ideal}}(\eta) ~~\mbox{where}~~
	\bt_{\mathrm{ideal}}(\eta) \defeq \frac{\norm{x_0 - \xopt}}{\sqrt{\sum_{i<T} \norm{g_i(\eta)}^2}} = \frac{\d[0]}{\sqrt{\G(\eta)}}
\end{equation*}
then the iterate average $\bar{x}(\eta) = \frac{1}{T}\sum_{i<T} x_i(\eta)$ satisfies the optimal error bound $f(\bar{x})-f(\xopt) \le d_0 \sqrt{\G(\eta)} / T$, scaling as $O( d_0 L / \sqrt{T})$ when $\norm{g_i(\eta)} \le L$ for all $i$. However, the quantity $\bt_{\mathrm{ideal}}$, which we call the ``ideal step size'' is not computable even in hindsight (when $\G(\eta)$ is available), since the parameter $\d[0] = \norm{x_0 - \xopt}$ is unknown.

Our key proposal is to approximate the distance to the optimum $\d[0]$ with a computable proxy: the maximum distance traveled by the algorithm, $\rbar(\eta) \defeq \max_{i\le T}\norm{x_0 - x_i(\eta)}$. We consider step sizes that (approximately) satisfy
\begin{equation}\label{eq:step-size-implicit-equation}
	\eta = \bt(\eta)
	~~\mbox{where}~~
	\bt(\eta) \defeq \frac{\rbar(\eta)}{\sqrt{\Gcoef \G(\eta) + \Ccoef}}
\end{equation}
for nonnegative damping parameters $\Gcoef$ and $\Ccoef$; in the exact gradient setting we can set $\Gcoef$ to any number $>1$ and $\Ccoef=0$, while the stochastic setting requires scaling $\Gcoef$ and $\Ccoef$ roughly as $\mathsf{poly}(\log\log T)$. Intuitively, $\rbar$ approximates $\d[0]$ since the SGD iterates should converge to $\xopt$ and therefore $\norm{x_0-x_T(\eta)}$ should be similar to $\norm{x_0-\xopt}$. 
However, in non-smooth optimization, convergence to $\xopt$ can be arbitrarily slow. We nevertheless prove that, when $\eta \le \bt(\eta)$, we have $\rbar(\eta) = O(\d[0])$ (\Cref{lem:distance-bound} below). With this result  and a refined SGD error bound (\Cref{lem:gm-error-bound-r} below), we show that (with exact gradients) any $\eta$ satisfying criterion~\eqref{eq:step-size-implicit-equation} recovers the optimal error bound.
\begin{prop}\label{prop:implicit-result}
	In the noiseless setting, 
	any step size $\eta > 0$ satisfying~\eqref{eq:step-size-implicit-equation} with $\Gcoef >1 $ and $\Ccoef=0$ produces $\bar{x} \defeq \frac{1}{T} \sum_{i< T} x_i (\eta)$ such that $\| x_\star - \bar{x} \| \le \frac{2\Gcoef}{\Gcoef - 1} \| x_\star - x_0 \|$ and
	\begin{equation*}
		f(\bar{x}) - f(\xopt)
		\le  \frac{\Gcoef^{3/2}}{\Gcoef -1} \cdot \frac{\d[0] \sqrt{\G[T](\eta)}}{T} = O\prn*{\frac{\d[0] \sqrt{\G[T](\eta)}}{T}}.
	\end{equation*}
\end{prop}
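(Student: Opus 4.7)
The plan is to combine the two tools previewed in the discussion preceding the proposition: \Cref{lem:distance-bound} to control $\rbar(\eta)$, and \Cref{lem:gm-error-bound-r} to translate the step-size criterion into a suboptimality bound. Both apply under the hypothesis $\eta = \bt(\eta)$ (and hence $\eta \le \bt(\eta)$).

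For the localization claim, I would first observe that $\bar{x}$ is a convex combination of $\{x_i(\eta)\}_{i<T}$, so $\|\bar x - x_0\| \le \rbar(\eta)$; the triangle inequality then gives $\|\bar x - \xopt\| \le \rbar(\eta) + \d[0]$. For the constants to collapse to $\frac{2\Gcoef}{\Gcoef-1}\d[0]$, \Cref{lem:distance-bound} needs to certify $\rbar(\eta) \le \frac{\Gcoef+1}{\Gcoef-1}\d[0]$ under the step-size criterion, which I expect it does.

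For the suboptimality bound, I would invoke \Cref{lem:gm-error-bound-r}, which I anticipate to yield a bound of the form
\begin{equation*}
    f(\bar x) - f(\xopt) \le \frac{\d[0]\,\rbar(\eta)}{\eta T} + \frac{\eta\,\G[T](\eta)}{2T}.
\end{equation*}
The key refinement relative to the classical analysis is the numerator $\d[0]\,\rbar(\eta)$ in the first term in place of $\d[0]^2$; this makes the bound linear, rather than reciprocal, in $\rbar(\eta)$, which is essential once we substitute $\eta = \rbar(\eta)/\sqrt{\Gcoef\,\G[T](\eta)}$. Such a bound can be derived (in the unconstrained case) by splitting $\langle g_i, x_i - \xopt\rangle = \langle g_i, x_i - x_0\rangle + \langle g_i, x_0 - \xopt\rangle$, bounding the first summand via the standard descent inequality with comparator $x_0$ (using $\r[0] = 0$ to kill the quadratic term), and bounding the second using the telescoping identity $\eta \sum_{i<T} g_i = x_0 - x_T$ followed by Cauchy-Schwarz.

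Substituting the criterion $\eta = \rbar(\eta)/\sqrt{\Gcoef\,\G[T](\eta)}$ transforms the right-hand side into $\bigl(\sqrt{\Gcoef} + \rbar(\eta)/(2\sqrt{\Gcoef}\,\d[0])\bigr)\, \d[0]\sqrt{\G[T](\eta)}/T$; applying $\rbar(\eta) \le \frac{\Gcoef+1}{\Gcoef-1}\d[0]$ simplifies the parenthesized factor to $\frac{2\Gcoef^2 - \Gcoef + 1}{2\sqrt{\Gcoef}(\Gcoef-1)}$, which is at most $\frac{\Gcoef^{3/2}}{\Gcoef - 1}$ whenever $\Gcoef \ge 1$. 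The main obstacle is establishing the refined bound: the classical bound $\d[0]^2/(2\eta T) + \eta \G[T]/(2T)$ produces a $\d[0]^2/\rbar$ term after the same substitution and diverges when $\rbar(\eta)$ is much smaller than $\d[0]$, so extracting only linear dependence on $\rbar(\eta)$ is what makes the whole certificate work.
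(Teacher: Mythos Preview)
Your approach is essentially the paper's: combine \Cref{lem:gm-error-bound-r} with \Cref{lem:distance-bound} and substitute the criterion $\eta=\rbar(\eta)/\sqrt{\Gcoef\G(\eta)}$. One correction, though: \Cref{lem:distance-bound} gives $\rbar(\eta)\le\frac{2\Gcoef}{\Gcoef-1}\d[0]$, not $\frac{\Gcoef+1}{\Gcoef-1}\d[0]$; the latter bound is for $\dbar$. With the correct constant for $\rbar$, your suboptimality computation lands on $\frac{\Gcoef^{3/2}}{\Gcoef-1}$ exactly (equality rather than inequality), and the localization follows more cleanly from $\|\bar x-\xopt\|\le\dbar(\eta)\le\frac{\Gcoef+1}{\Gcoef-1}\d[0]\le\frac{2\Gcoef}{\Gcoef-1}\d[0]$, since $\bar x$ is a convex combination of the $x_i$. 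Your sketched derivation of the refined bound via splitting $\langle g_i,x_i-\xopt\rangle$ at $x_0$ is a valid alternative to the paper's route, which instead factors $\d[0]^2-\d[T]^2=(\d[0]-\d[T])(\d[0]+\d[T])$ and bounds $\d[0]-\d[T]\le\r[T]\le\rbar$; the factoring argument has the advantage of handling the projected (constrained) case without modification.
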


Before proving \Cref{prop:implicit-result}, let us briefly discuss its algorithmic implications. Since the function $\bt(\cdot)$ is computable (at the cost of $T$ gradient queries) without a-priori assumptions on $\d[0]$, we have reduced parameter-free optimization to solving the one-dimensional implicit equation~\eqref{eq:step-size-implicit-equation}. However, the function $\bt$ might be discontinuous and an exact solution to the implicit equation might not even exist. Nevertheless, in the next section we show that finding an interval $[\eta, 2\eta]$ in which  $h\mapsto\bt(h)-h$ changes  sign, produces nearly the same error certificates at an interval edge. Since such interval is readily found via bisection, this forms the basis of a working parameter-free step size tuner. We leave the details to \Cref{sec:deterministic} and for the remainder of this section prove~\Cref{prop:implicit-result}.

\subsection{Proof of \Cref{prop:implicit-result}}
The proof of \Cref{prop:implicit-result} hinges on two lemmas. The first is a variant of the standard SGD error bound (recall that $\gdiff[i](\eta) = g_i(\eta) - \grad f(x_i(\eta))$ is zero in the noiseless setting).

\begin{lem}\label{lem:gm-error-bound-r}
For $T\in \N$, $\bar{x} \defeq \frac{1}{T} \sum_{i< T} x_i (\eta)$, and $\eta>0$,  we have
\begin{equation}\label{eq:gm-error-bound-r}
f(\bar{x}) - f(\xopt) \le 
\frac{1}{T} \sum_{i<T} f(x_i(\eta)) - f(\xopt) \le 
\frac{\rbar(\eta)\d[0]}{\eta T} + \frac{\eta \G[T](\eta)}{2T} +  \frac{1}{T} \sum_{i < T}  \inner{\gdiff[i](\eta)}{x_\star - x_i(\eta)}.
\end{equation}
\end{lem}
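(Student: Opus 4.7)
The plan is to follow the classical SGD analysis template but replace the usual "$d_0^2$" term in the telescope by a tighter expression involving $\rbar(\eta)$. The first inequality in \eqref{eq:gm-error-bound-r} is immediate from Jensen's inequality applied to the convex $f$ and the average $\bar{x} = \frac{1}{T}\sum_{i<T} x_i(\eta)$, so I will focus on the second.

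For the per-step bound, I would start from the standard one-step progress identity for projected SGD: since $\xopt\in\xset$ and $\projX$ is non-expansive,
\begin{equation*}
\norm{x_{i+1}(\eta)-\xopt}^2 \;\le\; \norm{x_i(\eta)-\eta g_i(\eta)-\xopt}^2
= \norm{x_i(\eta)-\xopt}^2 - 2\eta\inner{g_i(\eta)}{x_i(\eta)-\xopt} + \eta^2\norm{g_i(\eta)}^2.
\end{equation*}
Decomposing $g_i(\eta) = \grad f(x_i(\eta)) + \gdiff[i](\eta)$ and invoking the convexity inequality $\inner{\grad f(x_i(\eta))}{x_i(\eta)-\xopt} \ge f(x_i(\eta))-f(\xopt)$ turns this into a per-iteration upper bound on $f(x_i(\eta))-f(\xopt)$ with telescoping distance-to-$\xopt$ terms, a $\frac{\eta}{2}\norm{g_i(\eta)}^2$ term, and an $\inner{\gdiff[i](\eta)}{\xopt - x_i(\eta)}$ term.

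Summing over $i<T$ and dividing by $\eta T$, the squared distances collapse into the single quantity $\frac{1}{2\eta T}\prn{\d[0]^2 - \d[T](\eta)^2}$. The main obstacle---and the key deviation from the textbook argument---is bounding this telescoped difference by $\rbar(\eta)\d[0]/(\eta T)$ rather than the usual $\d[0]^2/(2\eta T)$. I would use the polarization identity
\begin{equation*}
\d[0]^2 - \d[T](\eta)^2 = 2\inner{x_0-x_T(\eta)}{x_0-\xopt} - \norm{x_0-x_T(\eta)}^2
\end{equation*}
(obtained by writing $\norm{a}^2-\norm{b}^2 = 2\inner{a}{a-b}-\norm{a-b}^2$ with $a=x_0-\xopt$, $b=x_T(\eta)-\xopt$), discard the non-positive $-\norm{x_0-x_T(\eta)}^2$ term, apply Cauchy--Schwarz, and use $\norm{x_0-x_T(\eta)} \le \rbar(\eta)$ to conclude $\d[0]^2 - \d[T](\eta)^2 \le 2\rbar(\eta)\d[0]$.

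Plugging this bound back into the telescoped sum yields exactly the $\frac{\rbar(\eta)\d[0]}{\eta T}$ first term of \eqref{eq:gm-error-bound-r}; the $\frac{\eta\G[T](\eta)}{2T}$ and $\frac{1}{T}\sum_{i<T}\inner{\gdiff[i](\eta)}{\xopt - x_i(\eta)}$ terms carry over unchanged. A final application of Jensen's inequality $f(\bar{x}) \le \frac{1}{T}\sum_{i<T} f(x_i(\eta))$ supplies the first inequality and completes the proof. The only non-routine step is the polarization trick; everything else is the standard SGD bookkeeping.
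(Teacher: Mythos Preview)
Your proof is correct and follows the same overall structure as the paper's: standard one-step SGD inequality, convexity, telescoping, and the key replacement of $\d[0]^2$ by $2\rbar(\eta)\d[0]$ when bounding $\d[0]^2-\d[T](\eta)^2$. The only difference is in how that last bound is obtained. The paper factors $\d[0]^2-\d[T]^2=(\d[0]-\d[T])(\d[0]+\d[T])$, uses the triangle inequality $\d[0]-\d[T]\le \r[T]\le\rbar$, and then splits into the cases $\d[T]\le\d[0]$ and $\d[T]>\d[0]$. Your polarization identity $\d[0]^2-\d[T]^2=2\inner{x_0-x_T}{x_0-\xopt}-\norm{x_0-x_T}^2$ followed by Cauchy--Schwarz yields the same bound in one stroke, without the case split, which is arguably cleaner. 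Both arguments are elementary and equivalent in strength.
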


\begin{proof}
Since $\eta$ is fixed throughout this proof, we streamline notation by dropping it from $x_i, g_i, \gdiff[i]$ and $\G[i]$. 
By convexity and the definition of $\gdiff[i]$,
\begin{flalign}\label{eq:jensen-expanded}
\frac{1}{T} \sum_{i<T} f(x_i) - f(\xopt)  \le \frac{1}{T} \sum_{i < T}  \inner{\grad f(x_i)}{x_i-\xopt}   = \frac{1}{T} \sum_{i < T}  {\inner{ g_i}{x_i-\xopt} +  \inner{\gdiff[i]}{x_\star - x_i}}.
\end{flalign}
From $x_{i+1} = \projX(x_i- \eta g_i)$ we can derive the standard subgradient method inequality
\begin{equation}\label{eq:standard-sgm-inequality}
	\d[i+1]^2 \le \norm{x_i - \eta  g_i - \xopt}^2  = \d[i]^2 - 2 \eta \inner{ g_i}{x_i-\xopt}  + \eta^2 \|  g_i \|^2
\end{equation}
for all $\eta \ge 0$ and $i = 0, \dots, T-1$. Rearranging and summing over $i<T$ gives
\begin{equation*}
\sum_{i < T}  \inner{ g_i}{x_i-\xopt} \le \frac{\d[0]^2-\d[T]^2}{2\eta} + \frac{\eta \G[T]}{2} = \frac{(\d[0]-\d[T])(\d[0]+\d[T])}{2 \eta} + \frac{\eta \G[T]}{2} \overle{(\star)} \frac{\rbar[T] \cdot 2\d[0]}{2 \eta} + \frac{\eta \G[T]}{2}.
\end{equation*}
The inequality $(\star)$ is where our proof deviates from the textbook derivation \cite[Theorem 2.13.]{orabona2021modern}; 
it holds because $\d[0]-\d[T] \le \r[T]$ due to the triangle inequality, and either $\d[T] \le \d[0]$ holds or $\d[0]^2-\d[T]^2 \le 0 \le 2\d[0] \rbar[T]$. Substituting into \eqref{eq:jensen-expanded} and applying $f(\bar{x}) \le\frac{1}{T} \sum_{i<T} f(x_i)$ by Jensens's inequality gives \eqref{eq:gm-error-bound-r}.
\end{proof}

The second lemma shows that for $\eta$ satisfying $\eta\le \bt(\eta)$ is guaranteed to produce iterates that do not wander too far from $\xopt$.
This is our basic localization guarantee.

\begin{lem}\label{lem:distance-bound}
	In the noiseless setting with $\Gcoef > 1$ and $\eta > 0$, if $\eta \le \bt(\eta)$ then we have  $\dbar(\eta) \le \frac{\Gcoef +1 }{\Gcoef -1} \d[0]$
		and 
		$\rbar(\eta) \le \frac{2\Gcoef}{\Gcoef - 1} \d[0]$.
\end{lem}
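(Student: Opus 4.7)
\textbf{Proof plan for \Cref{lem:distance-bound}.} The plan is to combine a one-sided distance recursion (which holds only in the noiseless regime) with the step-size hypothesis $\eta\le\bt(\eta)$ to get a self-referential inequality that pins down $\dbar(\eta)$, and then recover the bound on $\rbar(\eta)$ from the triangle inequality.

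First I would invoke the standard subgradient inequality \eqref{eq:standard-sgm-inequality} established inside the proof of \Cref{lem:gm-error-bound-r}: for every $i<T$,
\begin{equation*}
\d[i+1](\eta)^2 \le \d[i](\eta)^2 - 2\eta\inner{g_i(\eta)}{x_i(\eta)-\xopt} + \eta^2\norm{g_i(\eta)}^2.
\end{equation*}
In the noiseless setting $g_i(\eta)=\grad f(x_i(\eta))$, so by convexity $\inner{g_i(\eta)}{x_i(\eta)-\xopt}\ge f(x_i(\eta))-f(\xopt)\ge 0$. Telescoping and using monotonicity of $t\mapsto\G[t](\eta)$, one obtains $\d[t](\eta)^2\le \d[0]^2+\eta^2\G[T](\eta)$ for every $t\le T$, hence
\begin{equation*}
\dbar(\eta)^2 \;\le\; \d[0]^2 + \eta^2 \G[T](\eta).
\end{equation*}

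Next I would plug in the hypothesis $\eta\le\bt(\eta)=\rbar(\eta)/\sqrt{\Gcoef\G(\eta)+\Ccoef}$, which yields $\eta^2\G[T](\eta)\le \rbar(\eta)^2/\Gcoef$ (the case $\Ccoef>0$ only makes this tighter). Combined with the triangle-inequality bound $\rbar(\eta)\le \d[0]+\dbar(\eta)$, this gives the self-referential inequality
\begin{equation*}
\dbar(\eta)^2 \;\le\; \d[0]^2 + \frac{(\d[0]+\dbar(\eta))^2}{\Gcoef}.
\end{equation*}

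Finally, I would rearrange this into $(\Gcoef-1)\dbar(\eta)^2 - 2\d[0]\dbar(\eta) - (\Gcoef+1)\d[0]^2 \le 0$ and solve the resulting quadratic (using $\Gcoef>1$); the positive root is $\frac{2\d[0]+2\d[0]\sqrt{1+(\Gcoef^2-1)}}{2(\Gcoef-1)}=\frac{\Gcoef+1}{\Gcoef-1}\d[0]$, yielding the first claim. Substituting this back into $\rbar(\eta)\le \d[0]+\dbar(\eta)$ gives $\rbar(\eta)\le\bigl(1+\tfrac{\Gcoef+1}{\Gcoef-1}\bigr)\d[0]=\tfrac{2\Gcoef}{\Gcoef-1}\d[0]$, which is the second claim.

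The only nontrivial step is recognising that one can close the loop by using the triangle inequality $\rbar\le \d[0]+\dbar$ to turn the $\rbar^2/\Gcoef$ term from the step-size certificate into a quadratic in $\dbar$; once that is in hand the rest is algebra. I expect no real obstacle beyond being careful that the noiseless assumption is what makes $\inner{g_i}{x_i-\xopt}$ nonnegative, which is essential for dropping the cross term from the telescoped recursion.
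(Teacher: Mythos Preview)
Your proposal is correct and follows essentially the same argument as the paper: telescope the noiseless subgradient recursion to get $\dbar^2\le \d[0]^2+\eta^2\G[T]$, substitute the hypothesis $\eta\le\bt(\eta)$ together with $\rbar\le \d[0]+\dbar$ to obtain the quadratic inequality in $\dbar$, and solve. The only cosmetic difference is that the paper resolves the quadratic by completing the square, $\bigl(\dbar-\tfrac{1}{\Gcoef-1}\d[0]\bigr)^2\le \tfrac{\Gcoef^2}{(\Gcoef-1)^2}\d[0]^2$, whereas you apply the quadratic formula directly.
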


\begin{proof}
	This proof once more drops the explicit dependence on $\eta$. 
	Summing the inequality~\eqref{eq:standard-sgm-inequality} over $i<t$ and noting that that $\inner{g_i}{x_i - \xopt} \ge f(x_i)  - f(\xopt) \ge 0$ due to convexity and the noiseless setting, we have $\d[t]^2 \le \d[0]^2 + \eta^2 \G[t]$
	for every $t$. Maximizing over $t\le T$ and substituting $\eta \le \bt(\eta) \le \rbar / \sqrt{\Gcoef \G[T]}$ yields
	\begin{equation}\label{eq:deterministic-quadratic-ineq}
		\dbar[T]^2 \le \d[0]^2 + \eta^2 \G[T] \le \d[0]^2 + \frac{1}{\Gcoef}\rbar[T]^2 \overle{(\star)}   \d[0]^2 + \frac{1}{\Gcoef}(\dbar[T] + \d[0])^2,
	\end{equation}
	where $(\star)$ follows from the triangle inequality:  $\rbar[T]=\r[t] \le \d[t] + \d[0] \le \dbar[T] + \d[0]$ for some $t\le T$. 
	Rearranging yields
$\prn*{\dbar[T] - \frac{1}{\Gcoef - 1}\d[0]}^2 \le \frac{\Gcoef^2}{(\Gcoef-1)^2} \d[0]^2,$
	and therefore $\dbar[T] \le 
	\frac{\Gcoef+1}{\Gcoef-1}\d[0]$ as required. The bound $\rbar[T] \le \frac{2\Gcoef}{\Gcoef-1} \d[0]$ follows from substituting $\dbar[T] \le \frac{\Gcoef+1}{\Gcoef-1}\d[0]$ into $\rbar[T] \le \dbar[T] + \d[0]$.
\end{proof}

\Cref{prop:implicit-result} follows from substituting $\eta = \rbar(\eta) / \sqrt{\Gcoef \G}$ into bound~\eqref{eq:gm-error-bound-r} yielding $f(\bar{x})-f(\xopt) \le \frac{\d[0]\sqrt{\Gcoef\G(\eta)}}{T} +
 \frac{\rbar(\eta)\sqrt{\G(\eta)}}{2\sqrt{\Gcoef}T}$, 
and using $\rbar(\eta) \le \frac{2\Gcoef}{\Gcoef-1} \d[0]$ from \Cref{lem:distance-bound}. \hfill$\blacksquare$

\section{Algorithm description, and analysis for exact gradients}\label{sec:deterministic}

In this section we turn the step-size selection criterion presented in the previous section into a complete algorithm (\Cref{alg:main})---valid for stochastic as well as exact gradients---and analyze it in the simpler setting of exact gradients, deferring the stochastic case to the following section. 
Our algorithm consists of a core log-scale bisection subroutine ($\rootfinding$) 
coupled with an outer loop that acts as an aggressive doubling scheme on the upper limit of the bisection. We describe and analyze the two components \Cref{ssec:deterministic-bisection,ssec:deterministic-doubling}, respectively. Then, in \Cref{ssec:deterministic-final}, we put these results together and obtain parameter-free rates in the exact gradient setting.

\begin{algorithm2e}[t]
	\setstretch{1.1}
	\caption{Parameter-free SGD step size tuning}
	\label{alg:main}
	\LinesNumbered
	\DontPrintSemicolon
	\KwInput{
		Initial step size $\etainit > 0$, total gradient budget $B \in \N$, %
		 constants $\{\GcoefK, \CcoefK\}$ 
		
		\Comment{In the deterministic case, $\GcoefK=3$ (or any constant $>1$) and $\CcoefK=0$; in the stochastic case see eq.~\eqref{eq:coef-k-setting}}
	}
	\SetKwProg{Fn}{function}{}{}
	\For{$k = 2, 4, 8, 16, \dots$ }{ 
		\lIf{$k > B/4$}{ 
			\Return $x_0 $\Comment*[f]{Only happens in the edge case $B=O\prn*{\log \log \frac{\norm{\xopt - x_0}}{\etainit \sqrt{ \GcoefK[0] \norm{g_0}^2 + \CcoefK[0]} }}$}\label{line:main-alg-edge-case-termination}
		}
		$T_k \gets \floor*{\frac{B}{2 k}}$\;
		$\etaout \gets \rootfinding(\etainit, 2^{2^{k}} \etainit; T_k, \GcoefK, \CcoefK)$

		\lIf{$\etaout < \infty$}{\Return $\frac{1}{T_k} \sum_{i< T_k} x_i(\etaout)$}\label{line:main-alg-normal-termination}
	}

	\Fn{$\rootfinding(\etalo,\etahi;T,\Gcoef,\Ccoef)$}{
		$\bt \defeq \eta \mapsto \rbar(\eta) / \sqrt{\Gcoef \G(\eta) + \Ccoef}$
		\quad\Comment*[l]{Bisection target ($\rbar$ and $\G$ defined in eqs.~\eqref{eq:rbar-def} and~\eqref{eq:G-def})}\label{line:bisection-target-def}

		\lIf{$\etahi \le  \bt(\etahi)$}{\Return $\infty$  
			\notarxiv{\hspace{24pt}}
			\arxiv{\hspace{8pt}}
			\Comment*[h]{$\etahi$ is too low and should be increased}}\label{line:bisection-etahi-check}
		
		\lIf{$\etalo >  \bt(\etalo)$}{\Return $\etalo$ 
			\notarxiv{\hspace{23.7pt}}
			\arxiv{\hspace{7.7pt}} 
			\Comment*[h]{$\etalo$ is sufficient (assuming it is very small)}}\label{line:bisection-etalo-check}

		\While(\notarxiv{\hspace{72pt}}\arxiv{\hspace{66.5pt}}\Comment*[h]{Invariant: $\etalo < \etahi$, $\etalo \le \bt(\etalo)$, $\etahi > \bt(\etahi)$}){$\etahi > 2\etalo$}{\label{line:while-loop-termination}
			$\etamid \gets \sqrt{\etalo \etahi}$\;

			\leIf{$\etamid \le \bt(\etamid)$}{$\etalo \gets \etamid$}{$\etahi \gets \etamid$}\label{line:bisection-etamid-check}

		}

		\leIf{$\rbar(\etahi) \le \rbar(\etalo) \frac{\bt(\etahi)}{\etahi}$}{\Return $\etahi$}{\Return $\etalo$}\label{line:bisection-normal-termination}
		
	}
	
\end{algorithm2e}

\subsection{Bisection subroutine}\label{ssec:deterministic-bisection}

Let us describe the  $\rootfinding$ subroutine of \Cref{alg:main}. Its input is an initial interval $[\etalo,\etahi]$, SGD iteration number $T$ and damping parameters $(\Gcoef, \Ccoef)$ for defining the bisection target $\bt(\eta) = \rbar(\eta)/\sqrt{\Gcoef \G(\eta) + \Ccoef}$. After testing that $\etalo \le \bt(\etalo)$ and $\etahi > \bt(\etahi)$ (and handling the edge cases where this does not hold), we iteratively shrink the interval $[\etalo,\etahi]$ by replacing one of its edges with $\sqrt{\etalo\etahi}$ while maintaining the invariant $\etalo \le \bt(\etalo)$ and $\etahi > \bt(\etahi)$.\footnote{
	Our choice of $\etamid = \sqrt{\etalo\etahi}$, which corresponds to  standard bisection on a \emph{log-scale}, is crucial:  the standard choice  $\etamid=\frac{1}{2}(\etalo+\etahi)$ would result in a logarithmic rather than double-logarithmic number of bisection steps.
}  
The iterations stop  when $\etahi / \etalo \le 2$. Since each iteration halves $\log \frac{\etahi}{\etalo}$, the overall iteration number is double-logarithmic in the ratio of the input $\etahi$ and $\etalo$. Specifically, if the input interval satisfies $\etahi / \etalo = 2^{2^k}$ for $k\in \N$ (which it does in  \Cref{alg:main}), then $\rootfinding$ performs exactly $k = \log\log \frac{\etahi}{\etalo}$ bisection steps. Consequently, the overall oracle complexity of the the subroutine is $O(T \log \log_+ \frac{\etahi}{\etalo})$.  

We now focus on the end of the bisection procedure and explain the choice of output in \cref{line:bisection-normal-termination}.
When the bisection loop is complete, we obtain a relatively narrow interval $[\etalofinal, \etahifinal]$ in which $\bt(\eta)-\eta$ is guaranteed to change its sign. When $\bt$ is continuous, this implies that some $\eta \in [\etalofinal, \etahifinal]$ solves $\eta=\bt(\eta)$ and therefore has a good error bound by~\Cref{prop:implicit-result}. However, $\bt$ is not necessarily continuous. To explain why the bisection still outputs a good value of $\eta$, first note that \Cref{prop:implicit-result} continues to hold (with a slightly worse constant factor) even when $\eta$ only approximately solves $\eta=\bt(\eta)$, e.g., when
\notarxiv{$
\frac{\rbar(\eta)}{2\sqrt{\Gcoef \G(\eta) + \Ccoef}} = \tfrac{1}{2} \bt(\eta) \le \eta \le \bt(\eta) = \frac{\rbar(\eta)}{\sqrt{\Gcoef \G(\eta) + \Ccoef}}.
$}
\arxiv{
\begin{equation*}
	\frac{\rbar(\eta)}{2\sqrt{\Gcoef \G(\eta) + \Ccoef}} = \tfrac{1}{2} \bt(\eta) \le \eta \le \bt(\eta) = \frac{\rbar(\eta)}{\sqrt{\Gcoef \G(\eta) + \Ccoef}}.
\end{equation*}
}
The following lemma shows that the output of $\rootfinding$  in fact satisfies a similar bound. 
See Appendix~\ref{app:proof-of:lem:output-properties} for the (easy) proof, and note the lemma also holds in the stochastic case.

\begin{lem}\label{lem:output-properties}
	Let $\etalo, \etahi, \alpha, \beta > 0$, and $T \in \N$.
	If $\rootfinding(\etalo,\etahi;T,\Gcoef,\Ccoef)$ terminates in \cref{line:bisection-normal-termination} with final interval $[\etalofinal,\etahifinal]$ and returns $\etaout$, then 
	\begin{flalign}\label{eq:output-property} 
		\frac{\rbar(\etaout)}{2 \sqrt{\Gcoef \G(\etahifinal) + \Ccoef}} \le \etaout \le \frac{\rbar(\etalofinal)}{\sqrt{\Gcoef \G(\etaout) + \Ccoef}}.
	\end{flalign}
	Moreover, we have $\rbar(\etaout) \le \rbar(\etalofinal)$ and $\sqrt{\Gcoef \G(\etaout) + \Ccoef} \le 2  \sqrt{\Gcoef \G(\etahifinal) + \Ccoef}$.
\end{lem}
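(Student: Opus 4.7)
The plan is a direct case analysis on whether the subroutine returns $\etaout=\etahifinal$ or $\etaout=\etalofinal$ in \cref{line:bisection-normal-termination}. The key inputs are the three invariants that necessarily hold when we reach that line: (i) $\etalofinal \le \bt(\etalofinal)$, (ii) $\etahifinal > \bt(\etahifinal)$, both maintained throughout the bisection loop, and (iii) $\etahifinal \le 2\etalofinal$, which is exactly the loop's termination condition. Unpacking $\bt(\eta)=\rbar(\eta)/\sqrt{\Gcoef \G(\eta)+\Ccoef}$, these rewrite as $\etalofinal \sqrt{\Gcoef \G(\etalofinal)+\Ccoef} \le \rbar(\etalofinal)$ and $\etahifinal \sqrt{\Gcoef \G(\etahifinal)+\Ccoef} > \rbar(\etahifinal)$.

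First I would dispatch the case $\etaout=\etalofinal$. The upper inequality of \eqref{eq:output-property} is immediate from invariant (i), since $\G(\etaout)=\G(\etalofinal)$. For the lower inequality, the fact that the ``else'' branch was taken means $\rbar(\etahifinal) > \rbar(\etalofinal)\,\bt(\etahifinal)/\etahifinal$; rearranging yields $\etahifinal > \rbar(\etalofinal)/\sqrt{\Gcoef \G(\etahifinal)+\Ccoef}$, and combining with invariant (iii) gives $\etalofinal \ge \etahifinal/2 > \rbar(\etalofinal)/(2\sqrt{\Gcoef \G(\etahifinal)+\Ccoef})$, which is exactly the claim since $\rbar(\etaout)=\rbar(\etalofinal)$.

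Next I would handle $\etaout=\etahifinal$. Here the branch condition $\rbar(\etahifinal)\le \rbar(\etalofinal)\,\bt(\etahifinal)/\etahifinal$ rearranges to $\etahifinal \le \rbar(\etalofinal)/\sqrt{\Gcoef \G(\etahifinal)+\Ccoef}$, which is the required upper bound. The lower bound is obtained directly from invariant (ii): $\etaout=\etahifinal > \rbar(\etahifinal)/\sqrt{\Gcoef \G(\etahifinal)+\Ccoef} \ge \rbar(\etaout)/(2\sqrt{\Gcoef \G(\etahifinal)+\Ccoef})$.

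For the ``moreover'' statements, the bound $\rbar(\etaout)\le \rbar(\etalofinal)$ is trivial when $\etaout=\etalofinal$, and when $\etaout=\etahifinal$ it follows from the branch condition combined with $\bt(\etahifinal)/\etahifinal < 1$ (by invariant (ii)). The bound $\sqrt{\Gcoef \G(\etaout)+\Ccoef} \le 2\sqrt{\Gcoef \G(\etahifinal)+\Ccoef}$ is trivial when $\etaout=\etahifinal$; for $\etaout=\etalofinal$ it drops out of chaining the two inequalities in \eqref{eq:output-property} already proved, which together give $\sqrt{\Gcoef \G(\etalofinal)+\Ccoef}\le \rbar(\etalofinal)/\etalofinal < 2\sqrt{\Gcoef \G(\etahifinal)+\Ccoef}$. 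There is no real conceptual obstacle here; the only subtlety is carefully tracking which invariant supplies which inequality, and remembering to exploit the $\etahifinal \le 2\etalofinal$ gap that is responsible for the factor of $2$ appearing in \eqref{eq:output-property}.
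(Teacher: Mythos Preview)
Your proposal is correct and follows essentially the same approach as the paper's proof: a case split on whether $\etaout=\etalofinal$ or $\etaout=\etahifinal$, using the bisection invariants $\etalofinal\le\bt(\etalofinal)$, $\etahifinal>\bt(\etahifinal)$, the termination condition $\etahifinal\le 2\etalofinal$, and the branch test in \cref{line:bisection-normal-termination}. The paper packages each case into a single chain of inequalities and then reads off \eqref{eq:output-property} and the ``moreover'' bounds from those chains, but the logical content is identical to yours.
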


We now combine Lemmas \ref{lem:gm-error-bound-r}, \ref{lem:distance-bound} and \ref{lem:output-properties} to show an error bound for GD with the $\eta$ selected by $\rootfinding$. The proof of Proposition~\ref{prop:subopt-bound}
appears in Appendix~\ref{app:proof-of:subopt-bound}.

\begin{prop}\label{prop:subopt-bound}
	In the noiseless setting,
	let $\etaout = \rootfinding(\etalo,\etahi;T,\Gcoef,\Ccoef)$ for $\Gcoef>1$, $\etalo, \etahi > 0$, $\Ccoef \ge 0$, and $T \in \N$, assume that $\etahi > \bt(\etahi)$, and let $\bar{x} = \frac{1}{T} \sum_{i < T} x_i(\etaout)$. If $\etalo \le  \bt(\etalo)$, then
	\begin{equation*}\label{eq:deterministic-subopt-main}
		\norm{\bar{x} - x_0} \le \frac{2\Gcoef}{\Gcoef-1}\d[0]
		~~\mbox{and}~~
		f(\bar{x}(\etaout)) - f(\xopt) \le  \frac{2\Gcoef }{\Gcoef-1} \cdot \frac{\d[0] \sqrt{ \Gcoef \G(\eta') + \Ccoef}}{T}
	\end{equation*} 
	for some $\eta' \in [\etaout, 2\etaout]$. If instead $\etalo > \bt(\etalo)$,  then $\etaout = \etalo$ and
	\begin{equation*}\label{eq:deterministic-subopt-edge}
		\norm{\bar{x} - x_0} \le  \etalo \sqrt{\Gcoef \G(\etalo) + \Ccoef}
		\mbox{~~and~~}
		f(\bar{x}) - f(\xopt) \le \frac{ \d[0] \sqrt{\Gcoef \G(\etalo) + \Ccoef} + \etalo\G(\etalo) }{T}.
	\end{equation*}
\end{prop}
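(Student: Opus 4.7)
The plan is to split the analysis into the two cases laid out in the statement, corresponding to whether $\rootfinding$ enters the while loop normally or exits early at \cref{line:bisection-etalo-check}. In the normal case ($\etalo \le \bt(\etalo)$), the bisection terminates with an interval $[\etalofinal, \etahifinal]$ that is narrow ($\etahifinal \le 2 \etalofinal$) and straddles the fixed point in the sense that $\etalofinal \le \bt(\etalofinal)$ and $\etahifinal > \bt(\etahifinal)$, and then returns $\etaout \in \{\etalofinal, \etahifinal\}$. In particular $\etahifinal \in [\etaout, 2\etaout]$, so this will be our $\eta'$.

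For the first bound in the normal case, I would apply \Cref{lem:distance-bound} to $\etalofinal$ (whose invariant $\etalofinal \le \bt(\etalofinal)$ is exactly the hypothesis of that lemma) to obtain $\rbar(\etalofinal) \le \tfrac{2\Gcoef}{\Gcoef-1} \d[0]$, then use $\rbar(\etaout) \le \rbar(\etalofinal)$ from \Cref{lem:output-properties} and the triangle inequality $\norm{\bar x - x_0} \le \rbar(\etaout)$. For the suboptimality bound, I would invoke \Cref{lem:gm-error-bound-r} at $\eta = \etaout$ (the noise term vanishes in the noiseless setting) and bound the two resulting terms separately using \Cref{lem:output-properties}: the upper inequality $\etaout \ge \rbar(\etaout) / (2\sqrt{\Gcoef \G(\etahifinal) + \Ccoef})$ rearranges to $\rbar(\etaout)/\etaout \le 2\sqrt{\Gcoef \G(\etahifinal)+\Ccoef}$ which controls the first term directly. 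The lower inequality $\etaout \le \rbar(\etalofinal)/\sqrt{\Gcoef \G(\etaout) + \Ccoef}$, multiplied by $\G(\etaout)$, yields $\etaout \G(\etaout) \le \rbar(\etalofinal)\sqrt{\Gcoef \G(\etaout)+\Ccoef}/\Gcoef$; then the companion inequality $\sqrt{\Gcoef \G(\etaout)+\Ccoef} \le 2\sqrt{\Gcoef \G(\etahifinal)+\Ccoef}$ and $\rbar(\etalofinal) \le \tfrac{2\Gcoef}{\Gcoef-1}\d[0]$ turn the second term into a multiple of $\d[0]\sqrt{\Gcoef \G(\etahifinal)+\Ccoef}$. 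Summing both contributions and simplifying the constant to $\tfrac{2\Gcoef}{\Gcoef-1}$ gives the claimed suboptimality bound with $\eta' = \etahifinal$.

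The edge case ($\etalo > \bt(\etalo)$) is essentially a one-line calculation: by \cref{line:bisection-etalo-check} the routine returns $\etaout = \etalo$ without any SGD run beyond the one used to define $\bt(\etalo)$. The defining strict inequality rearranges to $\rbar(\etalo) < \etalo \sqrt{\Gcoef \G(\etalo) + \Ccoef}$, which gives the stated distance bound via $\norm{\bar x - x_0} \le \rbar(\etalo)$. Substituting the same rearrangement into the first term of \Cref{lem:gm-error-bound-r} applied at $\eta = \etalo$ (again noise-free) and bounding $\tfrac12 \etalo \G(\etalo) \le \etalo \G(\etalo)$ yields the second claimed bound.

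The main conceptual obstacle is that \Cref{lem:distance-bound} requires $\eta \le \bt(\eta)$ while $\etaout$ itself may fail this (it may equal $\etahifinal$, for which the opposite inequality holds). The key device circumventing this is the monotonicity $\rbar(\etaout) \le \rbar(\etalofinal)$ guaranteed by \Cref{lem:output-properties}: it transfers the localization established for $\etalofinal$ onto the actually-returned step size. The other delicate point is keeping track of which quantity is evaluated at $\etalofinal$, $\etahifinal$, or $\etaout$ when combining the two halves of \Cref{lem:output-properties} — this is purely bookkeeping, but it is what allows the final answer to be expressed solely in terms of $\G(\eta')$ for a single $\eta' \in [\etaout, 2\etaout]$.
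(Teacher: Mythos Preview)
Your proposal is correct and follows essentially the same approach as the paper's proof: apply \Cref{lem:gm-error-bound-r} at $\etaout$, control the two terms via the sandwich inequalities of \Cref{lem:output-properties}, push the localization through via \Cref{lem:distance-bound} applied to $\etalofinal$, and take $\eta' = \etahifinal$; the edge case is handled identically by rearranging $\etalo > \bt(\etalo)$ and substituting into \Cref{lem:gm-error-bound-r}. The constant bookkeeping you describe matches the paper's chain of inequalities step for step.
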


Let us briefly summarize our findings so far. 
When \rootfinding{} returns at \cref{line:bisection-normal-termination}, \Cref{prop:subopt-bound} provide a bound similar to the best achievable  when $\d[0]$ is known, with only a factor $O(\log\log\frac{\etahi}{\etalo})$  complexity increase. If instead the lower limit of the bisection is invalid, i.e., $\etalo > \bt(\etalo)$, our bound becomes the optimal rate plus a term proportional to $\etalo$. Therefore, by picking a very small value of $\etalo$ we can ensure a good error bound in that case as well.\footnote{ 
Moreover, when $\etalo$ is very small we  expect $\etalo \le \bt(\etalo)$ to hold, since then (intuitively) $g_i(\etalo) \approx g_0$ for all $i$, which implies $\bt(\etalo) \approx \etalo \sqrt{T/\Gcoef} > \etalo$ for sufficiently large $T$.
}
However, if the upper limit of the bisection is invalid, i.e., $\etahi \le \bt(\etahi)$, then \rootfinding{} fails, returning $\etaout=\infty$. We address this issue next.

\subsection{Doubling scheme for upper bisection limit}\label{ssec:deterministic-doubling}

\Cref{alg:main} iteratively calls \rootfinding{} with upper bisection limits $\etahi$ of the form $2^{2^k} \etainit$ (for doubling values of $k$) until the bisection returns $\etaout < \infty$, i.e., until $\etahi > \bt(\etahi)$. To ensure the overall number of gradient queries never exceeds the budget $B$, for every $k$ the algorithm also adjusts the SGD complexity $T$. In the stochastic case, the parameters $\Gcoef$ and $\Ccoef$ also increase with $k$ in order to enforce a union bound over an increasing number of SGD sample paths.

Intuitively, the bisection should succeed once $\etahi > \d[0]/\norm{g_0}$ since this is always an upper bound on the ideal step size $\bt_{\mathrm{ideal}}$. Even though we do not know $\d[0]$ and therefore cannot set $\etahi$ a-priori,\footnote{
	If an upper bound  $D \ge \d[0]$ is available (e.g., the domain diameter) then we may use it instead of a doubling scheme by directly fixing $k$ to be $\log\log \frac{D}{\etainit \norm{g_0}}$. However, this can improve our error bounds by at most a constant factor.} 
 \Cref{alg:main} will reach such $\etahi$ when $k$ is roughly $\log\log \frac{\d[0]}{\etainit\norm{g_0}}$. Lemma~\ref{lem:eta-max}, whose proof appears in Appendix~\ref{app:proof-of:lem:eta-max}, provides a rigorous version of our intuitive reasoning.
	
\newcommand{\kout}{k_{\mathrm{o}}}
\begin{lem}\label{lem:eta-max}
	In the noiseless setting with $\Gcoef > 1$, $\beta\ge0$ and any $T \in \N$, if $\eta > \etamax \defeq \frac{2\Gcoef}{\Gcoef-1}\cdot \frac{ \d[0]}{ \sqrt{\Gcoef \norm{g_0}^2 + \Ccoef}}$ then 
		 $\eta > \bt(\eta)$.
	Consequently, when \Cref{alg:main} terminates, $k \le 2 \log \log_+ \frac{\etamax}{\etainit}$.
\end{lem}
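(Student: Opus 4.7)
The plan is to establish the implication ``$\eta > \etamax \Rightarrow \eta > \bt(\eta)$'' by a short contradiction argument that invokes \Cref{lem:distance-bound} in its hypothesis branch, and then to deduce the algorithmic termination bound by tracking the doubly-exponential growth of $\etahi = 2^{2^k} \etainit$ through the outer loop.

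For the main implication, I would suppose for contradiction that $\eta > \etamax$ but $\eta \le \bt(\eta)$. The latter inequality puts us in the setting of \Cref{lem:distance-bound}, which immediately yields $\rbar(\eta) \le \frac{2\Gcoef}{\Gcoef - 1}\d[0]$. For the denominator of $\bt(\eta)$, the key observation is that $x_0$ (and hence $g_0$) does not depend on $\eta$, so $\G(\eta) \ge \norm{g_0}^2$ trivially. Substituting both bounds into $\bt(\eta) = \rbar(\eta) / \sqrt{\Gcoef \G(\eta) + \Ccoef}$ gives
\[
\bt(\eta) \le \frac{\frac{2\Gcoef}{\Gcoef - 1}\d[0]}{\sqrt{\Gcoef \norm{g_0}^2 + \Ccoef}} = \etamax < \eta,
\]
contradicting $\eta \le \bt(\eta)$.

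For the algorithmic consequence, I would split by which of the two exit points of \Cref{alg:main} is reached. If the algorithm exits at \cref{line:main-alg-normal-termination}, then \rootfinding{} returned a finite value, which by \cref{line:bisection-etahi-check} requires $\etahi > \bt(\etahi)$; by the first part this is guaranteed as soon as $2^{2^k} \etainit > \etamax$, i.e.\ once $k > \log \log_+(\etamax/\etainit)$. Since the outer loop takes $k$ through $\{2, 4, 8, \ldots\}$, the first successful $k$ is at most twice this threshold. If instead the algorithm exits at \cref{line:main-alg-edge-case-termination} at some value $k > 2$, the previous iteration (at value $k/2$) must have had \rootfinding{} return $\infty$, i.e.\ $\etahi = 2^{2^{k/2}}\etainit \le \bt(\etahi)$; using the contrapositive of the first part gives $2^{2^{k/2}} \etainit \le \etamax$, so again $k \le 2 \log \log_+(\etamax/\etainit)$. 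The only subtle point to check is the boundary case $k = 2$ (where there is no previous iteration to invoke), which is resolved by noting that $\log_+ \ge 2$ always, so $2 \log \log_+(\cdot) \ge 2$ trivially contains $k=2$. This edge-case accounting is the only piece I expect to require care; the rest of the argument is a direct substitution once the two key ingredients ($\G(\eta) \ge \norm{g_0}^2$ and the localization lemma) are in hand.
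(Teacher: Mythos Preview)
Your proof is correct and follows essentially the same approach as the paper's. The first part is identical in structure: assume $\eta \le \bt(\eta)$ for contradiction, invoke \Cref{lem:distance-bound} to bound $\rbar(\eta)$, and use $\G(\eta) \ge \norm{g_0}^2$ to bound the denominator, yielding $\bt(\eta) \le \etamax < \eta$. For the algorithmic consequence, the paper uses the same ``previous iteration'' reasoning you apply in your edge-case branch (either $\kout = 2$ or $2^{2^{\kout/2}}\etainit \le \etamax$, else the loop would have stopped at $\kout/2$), without splitting by exit point; your case analysis is slightly more explicit but equivalent.
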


Note that we do not perform a similar doubling scheme to search for an $\etalo$ value satisfying $\etalo \le \bt(\etalo)$, since there is no $\etalo$ for which such bound is guaranteed to hold: when $\d[0] = 0$ it is possible to have $\bt(\eta)=0$ for all $\eta$. More broadly, parameter-free methods must suffer some non-zero error when  $\d[0]=0$~\cite{orabona2013dimension}, and for our method such term only appears under the condition $\etalo > \bt(\etalo)$ of \Cref{prop:subopt-bound}, strongly suggesting that we cannot always force $\etalo \le \bt(\etalo)$ to hold.

\subsection{Error guarantees for exact gradients}\label{ssec:deterministic-final}

With \Cref{alg:main} explained and \Cref{prop:implicit-result} and \Cref{lem:eta-max} in place, we are ready to state the parameter-free convergence guarantee in the exact gradient setting. For simplicity of exposition, we fix $\Gcoef=3$ and $\Ccoef=0$, but note that any $\Gcoef > 1$ yields a similar guarantee.

\begin{theorem}\label{thm:deterministic-main}
	In the noiseless setting \Cref{alg:main}, with parameters $\GcoefK = 3$, $\CcoefK =0$, $\etainit > 0$, $B\in \N$, and $x_0\in\R^d$, performs at most $B$ subgradient queries and returns $\bar{x} = \frac{1}{T}\sum_{i< T}x_i(\eta)\in \R^d$ for some $\eta \ge \etainit$ and integer $T$ satisfying
	\begin{equation}\label{eq:determinisitic-T-lb}
		T \ge \max\crl*{ \frac{B}{12 \log \log_+ \frac{\norm{x_0 - \xopt}}{\etainit \norm{g_0}}}, 1 }
	\end{equation}
	such that either
	\begin{equation}\label{eq:deterministic-main-bound-normal-case}
		\norm{\bar{x}-\xopt} \le 4\norm{x_0 - \xopt}
		~~\mbox{and}~~
		f(\bar{x}) - f(\xopt) \le \sqrt{27}\,\frac{ \norm{x_0 - \xopt}\sqrt{\G(\eta')}}{T}
	\end{equation}
	for some $\eta' \in [\eta, 2\eta]$, or $\eta = \etainit$ and
	\begin{equation}\label{eq:deterministic-main-bound-edge-case}
		\norm{\bar{x}-x_0} \le  \etainit \sqrt{3\G(\etainit)}
		~~\mbox{and}~~
		f(\bar{x}) - f(\xopt) \le 2\,\frac{\etainit \G(\etainit)}{T}.
	\end{equation}
\end{theorem}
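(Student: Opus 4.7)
The plan is to combine three ingredients: the per-call guarantee of \Cref{prop:subopt-bound}, the outer-loop termination bound of \Cref{lem:eta-max}, and a direct accounting of oracle queries across the doubling outer loop.

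Let $k_{\mathrm{o}}$ be the value of $k$ at which \Cref{alg:main} exits. When termination occurs at \cref{line:main-alg-normal-termination}, specializing \Cref{lem:eta-max} to $\Gcoef = 3$, $\Ccoef = 0$ gives $\etamax = \sqrt{3}\,\norm{x_0 - \xopt}/\norm{g_0}$ and yields $k_{\mathrm{o}} \le 2\log\log_+(\etamax/\etainit)$. Combining with $T = T_{k_{\mathrm{o}}} = \floor{B/(2k_{\mathrm{o}})}$ proves~\eqref{eq:determinisitic-T-lb} after absorbing the $\sqrt{3}$ into $\log_+$. For the query count, I would observe that every failed \rootfinding{} call exits at \cref{line:bisection-etahi-check} after a single SGD sweep of $T_{k'}$ queries, so $\sum_{k' \in \{2, 4, \ldots\},\, k' \le k_{\mathrm{o}}/2} T_{k'} < B/2$ (a geometric series), and adding the successful final call's cost $(k_{\mathrm{o}}+2)T_{k_{\mathrm{o}}} \le B/2 + B/k_{\mathrm{o}}$ keeps the total close to $B$. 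The safeguard $k > B/4 \Rightarrow$ return $x_0$ at \cref{line:main-alg-edge-case-termination} handles the pathological regime in which $B = O(\log\log_+(\etamax/\etainit))$ is too small to afford even a tiny $T_{k_{\mathrm{o}}}$; in that case I would take $T=1$, $\eta = \etainit$ so that $\bar{x} = x_0$ matches the edge-case template.

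Next I would apply \Cref{prop:subopt-bound} to the final \rootfinding{} call with $\etalo = \etainit$, $\etahi = 2^{2^{k_{\mathrm{o}}}}\etainit$, $T = T_{k_{\mathrm{o}}}$, $\Gcoef = 3$, $\Ccoef = 0$. The proposition's hypothesis $\etahi > \bt(\etahi)$ holds because the call returned a finite value. In the subcase $\etainit \le \bt(\etainit)$, its normal branch yields $\norm{\bar{x} - x_0} \le (2\Gcoef/(\Gcoef-1)) \norm{x_0-\xopt} = 3\norm{x_0-\xopt}$, whence $\norm{\bar{x} - \xopt} \le 4\norm{x_0-\xopt}$ by the triangle inequality, and $f(\bar{x}) - f(\xopt) \le (2\Gcoef/(\Gcoef-1))\sqrt{\Gcoef}\,\norm{x_0-\xopt}\sqrt{\G(\eta')}/T = \sqrt{27}\,\norm{x_0-\xopt}\sqrt{\G(\eta')}/T$ for some $\eta' \in [\eta, 2\eta]$, matching~\eqref{eq:deterministic-main-bound-normal-case}. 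In the complementary edge subcase $\etainit > \bt(\etainit)$, the edge branch returns $\etaout = \etainit$, giving $\norm{\bar{x} - x_0} \le \etainit\sqrt{3\G(\etainit)}$ directly; the cleaner form of~\eqref{eq:deterministic-main-bound-edge-case} would follow by feeding the strict inequality $\rbar(\etainit) < \etainit\sqrt{3\G(\etainit)}$ back into \Cref{prop:subopt-bound}'s bound to absorb the $\norm{x_0-\xopt}\sqrt{3\G(\etainit)}$ term into $\etainit\G(\etainit)$.

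The main obstacle I anticipate is the tight query bookkeeping needed to obtain exactly $B$ (rather than a constant multiple of $B$); this hinges on the distinction that failed \rootfinding{} calls cost only $T_{k'}$ queries instead of $(k'+2)T_{k'}$. A secondary subtlety is matching the stated edge-case suboptimality form $2\etainit\G(\etainit)/T$---which lacks the $\norm{x_0-\xopt}\sqrt{3\G(\etainit)}$ term present in \Cref{prop:subopt-bound}'s edge branch---since that requires exploiting $\etainit > \rbar(\etainit)/\sqrt{3\G(\etainit)}$ in a way that eliminates the $\norm{x_0-\xopt}$ dependence, presumably combined with the localization afforded by this regime.
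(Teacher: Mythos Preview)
Your overall plan is correct and closely tracks the paper's proof: you combine \Cref{prop:subopt-bound}, \Cref{lem:eta-max}, and a direct query-count argument. The query accounting and the $T$ lower bound are handled correctly.

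However, your treatment of the two ``non-normal'' cases has a genuine gap, stemming from a single misunderstanding: you assume that the disjunction \eqref{eq:deterministic-main-bound-normal-case}--\eqref{eq:deterministic-main-bound-edge-case} is partitioned by $\etainit \le \bt(\etainit)$ versus $\etainit > \bt(\etainit)$, but it is not. The paper resolves the subcase $\etainit > \bt(\etainit)$ by a further case split on whether $\etainit\sqrt{3\G(\etainit)} \le 3\d[0]$. If so, then $\|\bar{x}-x_0\| \le \etainit\sqrt{3\G(\etainit)} \le 3\d[0]$ and $\etainit\G(\etainit) \le \sqrt{3}\,\d[0]\sqrt{\G(\etainit)}$, so \Cref{prop:subopt-bound}'s edge-branch bound collapses to conclusion~\eqref{eq:deterministic-main-bound-normal-case}. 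If not, then $\d[0]\sqrt{3\G(\etainit)} < \etainit\G(\etainit)$, and the edge-branch bound collapses to conclusion~\eqref{eq:deterministic-main-bound-edge-case}. Your proposed route---using $\rbar(\etainit) < \etainit\sqrt{3\G(\etainit)}$ to absorb the $\d[0]\sqrt{3\G(\etainit)}$ term---cannot work, because $\rbar(\etainit) = \max_i\|x_0 - x_i(\etainit)\|$ controls nothing about $\d[0] = \|x_0 - \xopt\|$, and \Cref{lem:distance-bound} is unavailable precisely in this regime.

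The same misunderstanding affects your handling of termination at \cref{line:main-alg-edge-case-termination}. You route $\bar{x}=x_0$ to the edge-case template~\eqref{eq:deterministic-main-bound-edge-case}, but that would require $f(x_0)-f(\xopt) \le 2\etainit\|g_0\|^2$, i.e., $\d[0] \lesssim \etainit\|g_0\|$, which is false in general. The correct placement is conclusion~\eqref{eq:deterministic-main-bound-normal-case}: trivially $\|\bar{x}-\xopt\| = \d[0] \le 4\d[0]$, and $f(x_0)-f(\xopt) \le \|g_0\|\d[0] = \d[0]\sqrt{\G[1]}$ by convexity with $T=1$.
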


The proof of \Cref{thm:deterministic-main} appears in Section~\ref{app:proof-of:thm:deterministic-main}.
Let us briefly compare the bounds in \Cref{thm:deterministic-main} to our guarantees for a solution to $\eta=\bt(\eta)$ shown in \Cref{prop:implicit-result}. The ``typical case'' bound~\eqref{eq:deterministic-main-bound-normal-case} is similar to the error bound of \Cref{prop:implicit-result} with only two notable differences beyond a slightly larger constant factor. First, by eq.~\eqref{eq:determinisitic-T-lb}, the value of $T$ in \Cref{thm:deterministic-main} is smaller than the total complexity budget by a double-logarithmic factor; this is the cost of performing a bisection instead of
assuming we start with a solution to the implicit equation. Second, the term $\G(\eta')$ in the RHS of~\eqref{eq:deterministic-main-bound-normal-case} is computed at $\eta'$ that is possibly different than the $\eta$ for which we prove the error bound. 

While bounding the error of SGD with step size $\eta$ using the gradients observed by SGD with a different step size $\eta'$ is unconventional,  our resulting bounds appear to be as useful as their more conventional counterparts. First, note that $\eta$ and $\eta'$ are within a factor of 2 of each other, and we can bring this factor arbitrarily close to 1 by running more bisection steps. Second, despite the difference in $\eta$, we can still use our lower bounds to obtain (up to double-logarithmic factors) a $1/T$ rate of convergence for smooth problems with unknown smoothness (see \Cref{sec:adaptivity-smooth}); this is the hallmark of error bounds that scale with $\sqrt{\G}$. Moreover, the different $\eta'$ issue disappears when we assume $f$ is $L$-Lipschitz and uniformly bound $\G(\eta)$ by $L^2 T$, as we do below.

We conclude this section with a particularly useful choice of $\etainit$. Let $\reps$ be a putative lower bound on $\d[0]$ and take 
\[
\etainit = \frac{\reps}{\norm{g_0} B}.
\]
Apply \Cref{alg:main} as described in \Cref{thm:deterministic-main}, obtaining $\bar{x}$ and $\eta$. To handle the case $\eta=\etainit$, set 
\[
z \defeq \begin{cases}
x_0 & \eta = \etainit \text{ and } \|g_0 \| \le \frac{\sqrt{G_T( \eta_{\varepsilon})}}{T} \\
\bar{x} & \text{otherwise}.
\end{cases} 
\] 
When $z=\bar{x}$, \Cref{thm:deterministic-main} guarantees that either $f(z)-f(\xopt) \le \sqrt{27} \frac{\d[0] \sqrt{\G(\eta')}}{T}$ (when $\eta > \etainit$) or $f(z)-f(\xopt) \le \frac{2\etainit \G(\etainit)}{T} = \frac{2\reps \G(\etainit)}{\norm{g_0} BT} \le \frac{2\reps \sqrt{\G(\etainit)}}{T}$ (when $\eta=\etainit$ and $\norm{g_0} > \frac{\sqrt{G_T( \eta_{\varepsilon})}}{T}$). 
When $z=x_0$  we have that $f(z) - f(\xopt) \le \norm{g_0} \d[0] \le \frac{\reps \sqrt{\G(\etainit)}}{T}$ by convexity and the fact that $\norm{g_0} \le \frac{\sqrt{G_T( \eta_{\varepsilon})}}{T}$ since $z=x_0$. In conclusion, we always have
\begin{equation*}
	f(z)-f(\xopt) \le \sqrt{27}\, \frac{\d[0] \sqrt{G_T( \eta')} + \reps\sqrt{G_T( \etainit )} }{T}.
\end{equation*}
Assuming that $f$ is $L$-Lipschitz so that $\G[T](\eta) \le L^2 T$ for all $\eta$ and using $T \ge \frac{B}{12 \log \log_+ \frac{B \d[0]}{\reps}}$ by \Cref{thm:deterministic-main}, we get that
\[
	f(z)-f(\xopt) \le 18 \sqrt{\log\log_+ \prn*{\frac{B d_0}{\reps}} } \frac{L (\d[0] + \reps)}{\sqrt{B}}.
\]
\section{Analysis for stochastic gradients}\label{sec:stochastic}

In this section, we extend the analysis of \Cref{alg:main} to the stochastic setting, using the following simple strategy: we define a ``good event'' under which the noiseless analysis goes through essentially unchanged (\Cref{ssec:stochastic-good-event}), and show that this event occurs with high probability (\Cref{ssec:stochastic-concentration}), obtaining a stochastic, high-probability, analog of our exact gradient result  (\Cref{ssec:stochastic-final}).

\subsection{Analysis in a ``good event''}\label{ssec:stochastic-good-event}

A careful inspection of our development thus far reveals that we only use the exact gradient assumption by substituting $\sum_{i < T} \inner{\gdiff[i](\eta)}{x_i(\eta)-\xopt} \ge 0$ into \Cref{lem:gm-error-bound-r}. Therefore, we consider the event where this inequality is approximately true. 
In particular, for $T\in \N$, and $\Gcoef,\Ccoef, \eta>0$ define
	\begin{equation*}
		\ev(\eta) \defeq  \crl*{
			\forall t\le T: \sum_{i < t} \inner{\gdiff[i](\eta)}{ x_i(\eta) - \xopt}
			\ge - \frac{1}{4}\max\{\dbar[t](\eta), \eta\sqrt{\Ccoef}\} \sqrt{\Gcoef \G[t](\eta) + \Ccoef}
		}.
	\end{equation*}

With this definition in hand, slightly modified versions of our key lemmas from
the deterministic analysis (Lemma~\ref{lem:distance-bound}, Proposition~\ref{prop:subopt-bound}, Lemma~\ref{lem:eta-max}) continue to hold. See~\Cref{app:stochastic:lem:distance-bound,app:prove-prop:stochastic:subopt-bound,app:prove-lem:stochastic-eta-max} for proofs of these results, which follow very similarly to their exact-gradient counterparts.

\begin{lem}\label{lem:stochastic-distance-bound}
	For any $T\in\N$, $\Gcoef > 2$ and $\Ccoef,\eta>0$, if event $\ev(\eta)$ holds and $\eta \le \bt(\eta)$ then
	$\dbar(\eta) \le \frac{3\Gcoef +2 }{\Gcoef -2} \d[0]
		\mbox{~~and~~}
		\rbar(\eta) \le \frac{4\Gcoef}{\Gcoef - 2} \d[0]$.
\end{lem}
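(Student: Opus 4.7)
The plan is to mirror the noiseless proof of \Cref{lem:distance-bound}, with the good event $\ev(\eta)$ supplying the analogue of $\sum_{i<t}\inner{\gdiff[i]}{x_i-\xopt} \ge 0$ used in the deterministic case, at the price of an extra additive term that is controlled by the implicit equation $\eta \le \bt(\eta)$.

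First I would drop the $\eta$-dependence from the notation, write $g_i = \grad f(x_i) + \gdiff[i]$, and apply convexity to get $\inner{g_i}{x_i-\xopt} \ge f(x_i)-f(\xopt) + \inner{\gdiff[i]}{x_i-\xopt} \ge \inner{\gdiff[i]}{x_i-\xopt}$. Substituting into the standard subgradient inequality~\eqref{eq:standard-sgm-inequality} (which is purely deterministic and does not use noiselessness) and telescoping yields, for every $t\le T$,
\begin{equation*}
\d[t]^2 \le \d[0]^2 + \eta^2 \G[t] + 2\eta \sum_{i<t}\inner{\gdiff[i]}{\xopt-x_i}.
\end{equation*}
Under $\ev(\eta)$ the last sum is at most $\tfrac14\max\{\dbar[t],\eta\sqrt{\Ccoef}\}\sqrt{\Gcoef\G[t]+\Ccoef}$, so taking the maximum over $t\le T$ (and using monotonicity of $\G[t]$ and $\dbar[t]$) gives
\begin{equation*}
\dbar[T]^2 \le \d[0]^2 + \eta^2 \G[T] + \tfrac{\eta}{2}\max\{\dbar[T],\eta\sqrt{\Ccoef}\}\sqrt{\Gcoef\G[T]+\Ccoef}.
\end{equation*}

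Next I would exploit $\eta\le\bt(\eta)$, which is equivalent to $\eta\sqrt{\Gcoef \G[T]+\Ccoef}\le \rbar[T]$. This immediately yields $\eta^2 \G[T] \le \rbar[T]^2/\Gcoef$, $\eta\sqrt{\Ccoef}\le\rbar[T]$, and $\eta\sqrt{\Gcoef \G[T]+\Ccoef}\le \rbar[T]$, so $\max\{\dbar[T],\eta\sqrt{\Ccoef}\}\le \max\{\dbar[T],\rbar[T]\}$. Combining with the triangle inequality $\rbar[T]\le \dbar[T]+\d[0]$ (the same argument as in the noiseless proof), all occurrences of $\rbar[T]$ on the right-hand side are replaced by $\dbar[T]+\d[0]$, giving
\begin{equation*}
\dbar[T]^2 \le \d[0]^2 + \prn*{\tfrac{1}{\Gcoef} + \tfrac{1}{2}}(\dbar[T]+\d[0])^2.
\end{equation*}

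Finally I would solve this quadratic in $\dbar[T]$ (this is the one slightly fiddly calculation, but analogous to the rearrangement step in \Cref{lem:distance-bound}). Writing $c\defeq \tfrac{1}{\Gcoef}+\tfrac12 = \tfrac{\Gcoef+2}{2\Gcoef}$, the assumption $\Gcoef>2$ gives $c<1$, and completing the square yields $\dbar[T]\le \tfrac{1+c}{1-c}\d[0] = \tfrac{3\Gcoef+2}{\Gcoef-2}\d[0]$. The second bound follows immediately from $\rbar[T]\le \dbar[T]+\d[0] \le \tfrac{4\Gcoef}{\Gcoef-2}\d[0]$.

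The main obstacle is the bookkeeping around the $\max\{\dbar[t],\eta\sqrt{\Ccoef}\}$ factor inside $\ev(\eta)$: we must verify that the implicit equation $\eta\le\bt(\eta)$ simultaneously controls the $\eta^2 \G[T]$ term, the $\eta\sqrt{\Ccoef}$ factor, and the $\sqrt{\Gcoef \G[T]+\Ccoef}$ factor via the single quantity $\rbar[T]$, so that the $\Ccoef$-dependence drops out cleanly and we recover a quadratic purely in $\dbar[T]$ and $\d[0]$. Once this collapse is done the need for $\Gcoef>2$ (rather than $\Gcoef>1$ as in the noiseless case) emerges naturally from the extra $\tfrac{1}{2}(\dbar[T]+\d[0])^2$ contribution of the noise term.
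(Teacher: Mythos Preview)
Your proposal is correct and follows essentially the same approach as the paper: telescope the standard SGD inequality, use convexity and the good event $\ev(\eta)$ to control the noise term, exploit $\eta\le\bt(\eta)$ to replace all $\eta$-dependent quantities by $\rbar[T]$, apply the triangle inequality $\rbar[T]\le\dbar[T]+\d[0]$, and solve the resulting quadratic inequality $\dbar[T]^2 \le \d[0]^2 + (\tfrac{1}{\Gcoef}+\tfrac12)(\dbar[T]+\d[0])^2$. The only cosmetic difference is that the paper keeps the two cases of the $\max$ separate a bit longer (writing $\max\{\tfrac{\rbar\dbar}{2},\tfrac{\rbar^2}{2\sqrt{\Gcoef}}\}$ and then using $\Gcoef>1$), whereas you collapse $\max\{\dbar[T],\eta\sqrt{\Ccoef}\}\le\dbar[T]+\d[0]$ directly; both routes land on the identical quadratic and constants.
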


\begin{prop}\label{prop:stochastic:subopt-bound}
	Let $T\in\N$, $\Gcoef > 2$, $\Ccoef > 0$, $\etalo > 0$ and $\etahi = 2^{2^k} \etalo$ for some $k \ge 1$. Let $\etaout = \rootfinding(\etalo,\etahi;T,\Gcoef,\Ccoef)$ and $\bar{x} = \frac{1}{T} \sum_{i < T} x_i(\etaout)$ where $\etalo \le \etahi$, and $\etahi > \bt(\etahi)$.
	If the event $\bigcap_{j=0}^{2^k} \ev( 2^j \etalo)$ holds and $\etalo \le  \bt(\etalo)$ then
	\begin{equation*}
		\norm{\bar{x} - x_0} \le \frac{4\Gcoef }{\Gcoef-2} \d[0]
		\mbox{~~and~~}
		f(\bar{x}) - f(\xopt) \le \frac{9\alpha - 2}{2(\alpha - 2)}  \cdot \frac{ \d[0] \sqrt{ \Gcoef \G(\eta') + \Ccoef}}{T}
	\end{equation*} 
	for some $\eta' \in [\eta, 2\eta]$. Moreover, if $\ev(\etalo)$ holds and  $\etalo > \bt(\etalo)$ then $\etaout = \etalo$ and
	\begin{equation*}
		\norm{\bar{x} - x_0} \le 
		\etalo \sqrt{\Gcoef \G(\etalo) + \Ccoef}
		\mbox{~~and~~}
		f(\bar{x}) - f(\xopt) \le \frac{5}{4} \frac{\d[0] \sqrt{\Gcoef \G(\etalo) + \Ccoef} + \etalo(\Gcoef\G(\etalo) + \Ccoef)}{T}.
	\end{equation*}
\end{prop}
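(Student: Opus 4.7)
The plan is to adapt the deterministic proof of \Cref{prop:subopt-bound} to the stochastic setting by using the ``good event'' to control the gradient noise term in \Cref{lem:gm-error-bound-r}. A key preliminary observation is that every step size probed by \rootfinding{} lies in the dyadic set $\{2^j\etalo : j=0,1,\ldots,2^k\}$: since $\etahi=2^{2^k}\etalo$, the log-scale midpoint $\sqrt{\etalo\etahi}=2^{2^{k-1}}\etalo$ preserves this structure and induction on the bisection steps does the rest. Consequently, under the hypothesis $\bigcap_{j=0}^{2^k}\ev(2^j\etalo)$ both events $\ev(\etaout)$ and $\ev(\etalofinal)$ hold, and (since the bisection invariant gives $\etalofinal\le\bt(\etalofinal)$) \Cref{lem:stochastic-distance-bound} applies at $\etalofinal$ and yields $\rbar(\etalofinal)\le\frac{4\Gcoef}{\Gcoef-2}\d[0]$.

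\smallskip

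For the typical case $\etalo\le\bt(\etalo)$, I would apply \Cref{lem:gm-error-bound-r} at $\eta=\etaout$ and bound each of the three resulting terms using the main inequalities from \Cref{lem:output-properties}: $\etaout\sqrt{\Gcoef\G(\etaout)+\Ccoef}\le\rbar(\etalofinal)$, $\rbar(\etaout)/\etaout\le 2\sqrt{\Gcoef\G(\etahifinal)+\Ccoef}$, and $\sqrt{\Gcoef\G(\etaout)+\Ccoef}\le 2\sqrt{\Gcoef\G(\etahifinal)+\Ccoef}$. For the noise term, $\ev(\etaout)$ yields an upper bound of $\frac{1}{4T}\max\{\dbar(\etaout),\etaout\sqrt{\Ccoef}\}\sqrt{\Gcoef\G(\etaout)+\Ccoef}$; I would bound $\dbar(\etaout)\le\d[0]+\rbar(\etaout)\le\d[0]+\rbar(\etalofinal)\le\frac{5\Gcoef-2}{\Gcoef-2}\d[0]$ via the triangle inequality and $\etaout\sqrt{\Ccoef}\le\rbar(\etalofinal)$ via the output property. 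Collecting the three terms as multiples of $\frac{\d[0]\sqrt{\Gcoef\G(\etahifinal)+\Ccoef}}{T}$ produces the coefficient $2+\frac{4}{\Gcoef-2}+\frac{5\Gcoef-2}{2(\Gcoef-2)}=\frac{9\Gcoef-2}{2(\Gcoef-2)}$; setting $\eta'=\etahifinal$, which lies in $[\etaout,2\etaout]$ because $\etahifinal\le 2\etalofinal\le 2\etaout$ at loop termination, gives the claimed bound. The localization $\norm{\bar{x}-x_0}\le\rbar(\etaout)\le\rbar(\etalofinal)\le\frac{4\Gcoef}{\Gcoef-2}\d[0]$ is immediate from the moreover clause of \Cref{lem:output-properties}.

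\smallskip

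For the edge case $\etalo>\bt(\etalo)$, \rootfinding{} exits immediately at \cref{line:bisection-etalo-check} with $\etaout=\etalo$, and the defining inequality rearranges to $\rbar(\etalo)\le\etalo\sqrt{\Gcoef\G(\etalo)+\Ccoef}$, giving the localization directly. For the suboptimality I would again apply \Cref{lem:gm-error-bound-r} at $\eta=\etalo$ together with $\ev(\etalo)$, bounding $\dbar(\etalo)\le\d[0]+\rbar(\etalo)$ by the triangle inequality since \Cref{lem:stochastic-distance-bound} is unavailable here; straightforward algebra then produces the $\frac{5}{4}$ constant. The main obstacle throughout is the bookkeeping: juggling the five distinct step-size variables $\etalo,\etahi,\etalofinal,\etahifinal,\etaout$ together with the event $\ev$ evaluated at the appropriate dyadic multiples of $\etalo$, and verifying that the noise contribution is controlled well enough that the final constants remain finite, which is precisely what forces the stronger $\Gcoef>2$ hypothesis (rather than $\Gcoef>1$ in the deterministic case).
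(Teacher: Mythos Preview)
Your proposal is correct and follows essentially the same approach as the paper: observe that all probed step sizes are dyadic so that $\ev(\etaout)$ and $\ev(\etalofinal)$ hold, apply \Cref{lem:gm-error-bound-r} at $\etaout$ with the noise term controlled via the good event, and then combine \Cref{lem:output-properties} with \Cref{lem:stochastic-distance-bound} (at $\etalofinal$) to obtain the $\frac{9\Gcoef-2}{2(\Gcoef-2)}$ constant. Your algebraic organization differs only cosmetically from the paper's (you substitute the bound $\rbar(\etalofinal)\le\frac{4\Gcoef}{\Gcoef-2}\d[0]$ into the noise estimate a step earlier), and your edge-case argument matches the paper's exactly.
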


\begin{lem}\label{lem:stochastic-eta-max}
	For any $T\in\N$, $\Gcoef > 2$ and $\Ccoef,\eta>0$, if event $\ev(\eta)$ holds then $\eta > \etamax(\Gcoef,\Ccoef) \defeq \frac{4\Gcoef}{\Gcoef-2}\cdot \frac{ \d[0]}{ \sqrt{\Gcoef \norm{g_0}^2 + \Ccoef}}$ implies that  
		$\eta > \bt(\eta)$.
	Consequently, if the event $\bigcap_{k=2,4,8,\ldots} \ev[T_k, \GcoefK, \CcoefK](2^{2^k}\etainit)$ holds, then 
	\Cref{alg:main} returns with $k \le 2 \log \log_+ \frac{\etamax(\GcoefK,\CcoefK)}{\etainit} \le  2 \log \log_+ \frac{\etamax(\GcoefK[0],\CcoefK[0])}{\etainit}$. 
\end{lem}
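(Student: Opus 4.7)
\medskip

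\textbf{Proof proposal.} The plan is to mimic the noiseless proof of \Cref{lem:eta-max}, replacing the deterministic localization bound of \Cref{lem:distance-bound} with its stochastic counterpart \Cref{lem:stochastic-distance-bound}, which is exactly the reason the constants degrade from $\frac{2\Gcoef}{\Gcoef-1}$ to $\frac{4\Gcoef}{\Gcoef-2}$.

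For the first claim, I would argue by contrapositive: suppose $\ev(\eta)$ holds and $\eta \le \bt(\eta)$, and prove $\eta \le \etamax(\Gcoef,\Ccoef)$. By \Cref{lem:stochastic-distance-bound} the hypotheses yield $\rbar(\eta) \le \frac{4\Gcoef}{\Gcoef-2} \d[0]$. Combining with $\G[T](\eta) \ge \norm{g_0}^2$ (since $\G[T](\eta)$ sums nonnegative terms including the first one, and $g_0 = \oracle(x_0)$ does not depend on $\eta$), I obtain
\begin{equation*}
	\eta \;\le\; \bt(\eta) \;=\; \frac{\rbar(\eta)}{\sqrt{\Gcoef \G[T](\eta) + \Ccoef}} \;\le\; \frac{\tfrac{4\Gcoef}{\Gcoef-2} \d[0]}{\sqrt{\Gcoef \norm{g_0}^2 + \Ccoef}} \;=\; \etamax(\Gcoef,\Ccoef),
\end{equation*}
which is exactly what we want.

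For the ``consequently'' part, I would examine the loop of \Cref{alg:main}. At iteration $k\in\{2,4,8,\ldots\}$ the algorithm invokes $\rootfinding(\etainit, 2^{2^k}\etainit; T_k, \GcoefK, \CcoefK)$, which returns $\infty$ (causing the outer loop to continue) precisely when the check in \cref{line:bisection-etahi-check} fails, i.e.\ when $2^{2^k}\etainit \le \bt(2^{2^k}\etainit)$ for the target $\bt$ defined with $(\GcoefK, \CcoefK)$. By the first part of the lemma applied on the event $\ev[T_k, \GcoefK, \CcoefK](2^{2^k}\etainit)$, this can happen only if $2^{2^k}\etainit \le \etamax(\GcoefK,\CcoefK)$. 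Hence on the intersection event the algorithm must return at \cref{line:main-alg-normal-termination} as soon as $2^{2^k} > \etamax(\GcoefK,\CcoefK)/\etainit$, i.e.\ $k > \log\log_+ \tfrac{\etamax(\GcoefK,\CcoefK)}{\etainit}$. Since $k$ doubles between successive outer iterations, the first such $k$ is at most $2 \log\log_+ \tfrac{\etamax(\GcoefK,\CcoefK)}{\etainit}$, giving the claimed upper bound.

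Finally, the second inequality $\etamax(\GcoefK,\CcoefK) \le \etamax(\GcoefK[0],\CcoefK[0])$ follows by monotonicity: a direct inspection of the definition shows $\etamax(\Gcoef,\Ccoef)$ is nonincreasing in $\Gcoef>2$ (since $\Gcoef\mapsto\frac{4\Gcoef}{\Gcoef-2}$ is decreasing and $\Gcoef\mapsto\sqrt{\Gcoef\norm{g_0}^2+\Ccoef}$ is increasing) and in $\Ccoef\ge 0$, and by the choice of coefficients in~\eqref{eq:coef-k-setting} the sequences $\GcoefK$ and $\CcoefK$ are nondecreasing in $k$, so plugging $k=0$ gives the largest value. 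The main obstacle is essentially bookkeeping: making sure $\G[T](\eta) \ge \norm{g_0}^2$ holds for $T\ge 1$ (trivial from the definition) and that the monotonicity argument matches the precise form of $\GcoefK, \CcoefK$ used in the stochastic setting.
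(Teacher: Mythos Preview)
Your proposal is correct and follows essentially the same approach as the paper: the paper argues the first part by contradiction (assuming $\eta>\etamax$ and $\eta\le\bt(\eta)$ and deriving $\rbar(\eta)>\frac{4\Gcoef}{\Gcoef-2}\d[0]$ to contradict \Cref{lem:stochastic-distance-bound}), while you argue the logically equivalent contrapositive; the ``consequently'' part and the monotonicity argument for replacing $\GcoefK,\CcoefK$ by $\GcoefK[0],\CcoefK[0]$ are the same in both.
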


\subsection{The good event is likely}\label{ssec:stochastic-concentration}

We now arrive at the challenging part of the stochastic analysis: showing that the good event we defined occurs with high probability. For this, we require the following standard assumption.

\begin{assumption}\label{ass:lipschitz}
	The stochastic gradient oracle satisfies $\norm{\oracle(\eta)} \le L$ with probability 1.
\end{assumption}
\noindent
In online parameter-free optimization such assumption is unavoidable if one seeks regret scaling linearly in the comparator norm~\cite{cutkosky2017online}. However, similarly to the best prior results, our bounds depend on $L$ only via a low-order term. 

\newcommand{\themg}{X_i^{(k)}}

The following result shows that, for appropriate choices of $\Gcoef$ and $\Ccoef$ and any fixed $\eta \ge 0$ the event $\ev(\eta)$ has high probability.
\begin{prop}\label{prop:concentration-single-eta}
	Under \Cref{ass:lipschitz}, for any $T\in \N$, $\eta >0$ and $\delta\in(0,1)$, if
	$\Gcoef \ge 32^2 C
		~\mbox{and}~
		\Ccoef \ge (32CL)^2
		~\mbox{for}~
		C=  \log\prn*{\frac{60}{\delta} \log^2(6T)}$
	then $\Pr*( \ev(\eta ) ) \ge 1-\delta$.
\end{prop}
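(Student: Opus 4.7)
The target is a time-uniform lower-tail bound on the martingale $M_t := \sum_{i<t} X_i$ with $X_i := \inner{\gdiff[i](\eta)}{x_i(\eta)-\xopt}$, since $\ev(\eta)$ is the event that $M_t \ge -\tfrac14 \max\{\dbar[t](\eta),\eta\sqrt{\Ccoef}\}\sqrt{\Gcoef \G[t](\eta)+\Ccoef}$ for all $t\le T$. The plan is: (i) show $\{X_i\}$ is a martingale difference sequence and bound its pointwise magnitude and empirical quadratic variation in terms of $\dbar[t]$ and $\G[t]$; (ii) apply a Freedman/empirical-Bernstein inequality; (iii) handle the data-dependence of both $\dbar[t]$ and $\G[t]$ via two-dimensional peeling.

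For step (i), using $\E[\gdiff[i]\mid\filt_i]=0$, $\|g_i\|,\|\grad f(x_i)\|\le L$, and Cauchy--Schwarz,
\begin{equation*}
|X_i| \le 2L\,\dbar[t](\eta), \qquad \sum_{i<t} X_i^2 \le \dbar[t](\eta)^2 \sum_{i<t} \|\gdiff[i]\|^2 \le 4\,\dbar[t](\eta)^2\prn*{\G[t](\eta)+L^2 t},
\end{equation*}
with the $L^2 t$ slack eventually absorbed into $\Ccoef$. For step (ii), the standard Freedman bound gives $\Pr(\exists t\le T: -M_t \ge \sqrt{2V\log(1/\delta)} + \tfrac23 B\log(1/\delta)) \le \delta$ when $B,V$ are deterministic bounds on the increments and quadratic variation; substituting the bounds from (i) formally reproduces the shape of $\ev(\eta)$, with $\dbar[t]$ outside and $\sqrt{\G[t]}$ inside the square root.

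The principal obstacle is step (iii): $\dbar[t]$ and $\G[t]$ are random and have no a priori bounds --- indeed, not bounding $\dbar[t]$ a priori is the whole point of parameter-free optimization. I would resolve this with two-dimensional peeling: partition the joint $(\dbar[t], \G[t])$ plane into geometric strata $\{2^k u_0 \le \dbar[t] < 2^{k+1} u_0\}\cap\{2^\ell v_0 \le \G[t]+L^2 t < 2^{\ell+1} v_0\}$, with $u_0 \propto \eta\sqrt{\Ccoef}/L$ and $v_0 \propto \Ccoef/\Gcoef$ (so that the floors $\eta\sqrt{\Ccoef}$ and $\Ccoef$ in $\ev(\eta)$ render scales below them harmless). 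On each stratum, let $\tau_{k,\ell}$ be the first entry time, apply Freedman at $\tau_{k,\ell}$ with the stratum-top $B,V$, and assign failure probability $\delta_{k,\ell} := c\delta/((k+1)^2(\ell+1)^2)$. The stopping-time formulation simultaneously handles time-uniformity (one application per stratum) and data-dependence (the $B,V$ are deterministic once the stratum is fixed), while the $1/(k+1)^2(\ell+1)^2$ weighting makes the union bound $\sum_{k,\ell}\delta_{k,\ell} \le \delta$ telescope.

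Finally, since $\dbar[t]\le \d[0]+\eta L T$ and $\G[t]\le L^2 T$ force $k,\ell = O(\log T)$, each per-stratum penalty $\log(1/\delta_{k,\ell}) = \log(1/\delta) + O(\log\log T)$ matches the $\log^2(6T)$ factor inside $C = \log(60 \log^2(6T)/\delta)$: the two $\log\log T$ terms correspond to the two peeling dimensions and the constant $60$ absorbs summability constants. Straightforward algebra then verifies that $\Gcoef \ge 32^2 C$ and $\Ccoef \ge (32CL)^2$ suffice to make the variance-type and bounded-increment contributions each at most $\tfrac18\max\{\dbar[t],\eta\sqrt{\Ccoef}\}\sqrt{\Gcoef\G[t]+\Ccoef}$, totalling the required $\tfrac14$. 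The chief difficulty is engineering the peeling so its cost appears only as $\log\log T$ inside $C$: a naive time-uniform union bound would leak a full $\log T$ factor into the main term, destroying the parameter-free double-logarithmic improvement that is the paper's central contribution.
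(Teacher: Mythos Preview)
Your high-level structure (martingale concentration plus peeling over the random scale $\dbar[t]$) is right, but there is a real gap in step~(i): the claim that ``the $L^2 t$ slack [is] eventually absorbed into $\Ccoef$'' is false. You bound $\sum_{i<t}X_i^2\le 4\dbar[t]^2(\G[t]+L^2 t)$, where the $L^2 t$ term arises because $\norm{\gdiff[i]}^2\le 2\norm{g_i}^2+2\norm{\grad f(x_i)}^2$ and the $\norm{\grad f(x_i)}^2$ piece contributes up to $L^2$ per step regardless of $\norm{g_i}$. But $\Ccoef=(32CL)^2$ with $C=O(\log(1/\delta)+\log\log T)$, so it cannot swallow a term that grows like $L^2 T$; your final algebra would force $\Ccoef\ge\Omega(L^2 T)$. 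Switching to Freedman's predictable quadratic variation does not rescue the argument either: you would then control $\sum_{i<t}\E[X_i^2\mid\filt_{i-1}]\le\dbar[t]^2\sum_{i<t}\E[\norm{g_i}^2\mid\filt_{i-1}]$, and this predictable sum is \emph{not} the empirical $\G[t]$ that appears in $\ev$. Relating the two would require an additional time-uniform concentration step you have not supplied.

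The paper sidesteps both issues with a small but decisive change in what process it concentrates. Rather than $X_i=\inner{\gdiff[i]}{x_i-\xopt}$, it works with $X_i^{(k)}=\inner{g_i/L}{\proj((x_i-\xopt)/s_k)}$, where $\proj$ is projection to the Euclidean unit ball and $s_k=2^k\max\{\d[0],\eta\sqrt{\Ccoef}\}$. Two things follow. First, $|X_i^{(k)}|\le 1$ almost surely, and since the Howard--Ramdas--McAuliffe--Sekhon time-uniform empirical-Bernstein inequality permits any predictable center $\hat X$, taking $\hat X=0$ makes the variance proxy $\sum_i(X_i^{(k)})^2\le\G[t]/L^2$ \emph{exactly}, with no $L^2 t$ leftover. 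Second, $X_i^{(k)}-\E[X_i^{(k)}\mid\filt_{i-1}]=\inner{\gdiff[i]/L}{\proj((x_i-\xopt)/s_k)}$, and whenever $\dbar[t]\le s_k$ the projection is the identity, so the bound transfers back to the original sum after multiplying by $Ls_k\le 2L\max\{\dbar[t],\eta\sqrt{\Ccoef}\}$. A bonus is that only \emph{one}-dimensional peeling over $k$ (with $O(\log T)$ scales, since $\dbar[t]\le\d[0]+t\eta L$) is needed: the empirical $\G[t]$ already sits inside the concentration inequality, so your second peeling dimension becomes unnecessary once you use the right tool. The $\log^2(6T)$ inside $C$ thus comes from one $\log(6T)$ for the union bound over $k$ and one from the intrinsic time-uniformity of the Howard et~al.\ bound, not from two peeling axes.
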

\Cref{prop:concentration-single-eta} makes no a-priori assumption on the size of $x_i(\eta)-\xopt$, instead controlling it empirically via $\dbar[t](\eta)$; this is unusual in the literature and crucial for our purposes. 
Our proof (given in \Cref{app:stochastic:signle-eta-proof}) relies on a time-uniform empirical-Bernstein-type martingale concentration bound~\cite{howard2021time}. However, since this result requires martingale differences that are bounded with probability 1, we cannot apply it on $\inner{\gdiff[i](\eta)}{x_i(\eta)-\xopt}$ (which is not bounded), nor can we apply it on $\inner{\gdiff[i](\eta)}{x_i(\eta)-\xopt}/\dbar[t](\eta)$ (which is bounded but is not adapted to any filtration). Instead, we consider processes of the form $\inner{\gdiff[i](\eta)}{\proj([x_i(\eta)-\xopt]/s)}$, where $\proj(\cdot)$ is the projection to the unit ball and $s$ is a fixed scalar. By carefully union bounding over a set of $O(\log T)$ values of $s$, we are able to control the probability of $\ev(\eta)$.

Having shown that the good event occurs with high probably for any fixed $\eta$, our next step is to show that, for proper choices of $\GcoefK$ and $\CcoefK$, good events hold with high probability for each and every single value of $\eta$ \Cref{alg:main} might try.
Noting that for, each value of $k$, \Cref{alg:main} only tests step size values of the form $2^j \etainit$ for $j\in \{0, \ldots, 2^k\}$, the following lemma (which is a direct application of union bounds) provides the required guarantee; see proof in \Cref{app:stochastic-concentration-master-proof}.

\begin{lem}\label{lem:concentration-master}
	For budget $B\in \N$, initial step size $\etainit > 0$, and failure probability $\delta\in(0,1)$, let
	\begin{equation}\label{eq:coef-k-setting}
		\GcoefK \defeq 32^2 C_k
		~~\mbox{and}~~
		\CcoefK \defeq (32C_k L)^2,~~\mbox{where}~~
		C_k = {2k + \log \prn*{\frac{60 \log^2(6B)}{\delta}}}.
	\end{equation}
	Then, under \Cref{ass:lipschitz}, we have
	$\Pr*(  \bigcap_{k=2, 4, 8, \ldots} \bigcap_{j=0, 1, \ldots, 2^k} \ev[B, \GcoefK, \CcoefK]( 2^j \etainit ) ) \ge 1-\delta$.
\end{lem}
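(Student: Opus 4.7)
The plan is a straightforward union bound argument. I would apply Proposition~\ref{prop:concentration-single-eta} separately to each candidate step size $2^j \etainit$ with a carefully shrunken per-step failure probability, and then sum the failure probabilities over the doubly-indexed collection of step sizes.

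First, fix any $k \in \{2, 4, 8, \ldots\}$ and $j \in \{0, 1, \ldots, 2^k\}$, and set $\delta_{k,j} \defeq \delta / 4^k$. I would invoke Proposition~\ref{prop:concentration-single-eta} on $\eta = 2^j \etainit$ with $T = B$ and failure probability $\delta_{k,j}$. Its hypotheses require $\Gcoef \ge 32^2 C$ and $\Ccoef \ge (32 C L)^2$ for $C = \log(60 \log^2(6B) / \delta_{k,j})$. Using the base-$2$ identity $\log(1/\delta_{k,j}) = \log(1/\delta) + 2k$, this $C$ equals exactly the $C_k$ defined in~\eqref{eq:coef-k-setting}, so the choices $\GcoefK = 32^2 C_k$ and $\CcoefK = (32 C_k L)^2$ satisfy the hypotheses with equality. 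Therefore
\[
\P\bigl( \ev[B, \GcoefK, \CcoefK](2^j \etainit)^c \bigr) \le \delta_{k,j} = \delta / 4^k.
\]

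A union bound over all $(k,j)$ pairs then gives
\[
\P\left(\bigcup_{k \in \{2,4,8,\ldots\}} \bigcup_{j=0}^{2^k} \ev[B, \GcoefK, \CcoefK](2^j \etainit)^c \right)
\le \delta \sum_{k \in \{2,4,8,\ldots\}} \frac{2^k + 1}{4^k} \le 2\delta \sum_{k \in \{2,4,8,\ldots\}} 2^{-k},
\]
and the remaining series is $1/4 + 1/16 + 1/256 + \cdots < 3/8$, giving a total failure probability below $3\delta/4 \le \delta$, as required.

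The only delicate point is calibrating $\delta_{k,j}$: it must be small enough (proportional to $4^{-k}$) that the $2k$ offset hard-coded into $C_k$ absorbs its logarithmic dependence on $1/\delta_{k,j}$ in Proposition~\ref{prop:concentration-single-eta}, yet large enough that, after the inner sum over $j \in \{0,\ldots,2^k\}$ contributes a factor $2^k + 1$, the double series in $k$ still converges. The choice $\delta / 4^k$ achieves both simultaneously, and everything else is bookkeeping; there is no probabilistic content beyond Proposition~\ref{prop:concentration-single-eta} itself.
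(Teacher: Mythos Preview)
Your proof is correct and essentially identical to the paper's own argument: both choose per-event failure probability $2^{-2k}\delta = \delta/4^k$ so that Proposition~\ref{prop:concentration-single-eta} applies with exactly $C=C_k$, then union-bound over $j\in\{0,\ldots,2^k\}$ to get $(2^k+1)4^{-k}\delta$ per $k$, and finally sum over $k\in\{2,4,8,\ldots\}$. The only difference is cosmetic bookkeeping in the final summation.
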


\subsection{Parameter-free rates for stochastic convex optimization}\label{ssec:stochastic-final}

We are ready to state our main result; see proof in \Cref{app:stochastic:main-proof}.
\begin{theorem}\label{thm:stochastic-main}
	Under \Cref{ass:lipschitz}, for any $\delta\in (0,1)$ consider \Cref{alg:main}  with parameters $\GcoefK$, $\CcoefK$ given by \cref{eq:coef-k-setting}, $\etainit > 0$, $B\in \N$, and $x_0\in\R^d$. The algorithm makes at most $B$ gradient queries and returns $\bar{x} = \frac{1}{T}\sum_{i< T}x_i(\eta)\in \R^d$ for  $\eta \ge \etainit$ such that, with probability at least $1-\delta$,
	\begin{equation}\label{eq:stochastic-T-lb}
		T \ge \max\crl*{ \frac{B}{8 \log \log_+ \frac{\norm{x_0 - \xopt}}{\etainit L}}, 1 }
	\end{equation}
	and for $C = \log \frac{1}{\delta} +  \log\log_+  \frac{B\norm{\xopt-x_0}}{\etainit L}$, either 
	\begin{equation}\label{eq:stochastic-main-bound-normal-case}
		\norm{\bar{x}-\xopt} \le 6\norm{x_0 - \xopt}
		~~\mbox{and}~~
		f(\bar{x}) - f(\xopt) \le O\prn*{ \frac{ \norm{x_0 - \xopt}\sqrt{C\G(\eta')+C^2L^2}}{T}}
	\end{equation}
	for some $\eta' \in [\eta, 2\eta]$, or
	\begin{equation}\label{eq:stochastic-main-bound-edge-case}
		\norm{\bar{x}-\xopt} = O\prn*{ \etainit \sqrt{C\G(\etainit)+C^2L^2}}
		~~\mbox{and}~~
		f(\bar{x}) - f(\xopt) \le O\prn*{\frac{\etainit \prn*{C\G(\etainit)+C^2L^2}}{T}}.
	\end{equation}
\end{theorem}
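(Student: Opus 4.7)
The plan is to chain three earlier results deterministically inside a high-probability good event. Specifically: apply \Cref{lem:concentration-master} once to spend the probability budget via a union bound, then use \Cref{lem:stochastic-eta-max} to control the outer doubling-loop index $k$ at termination (which in turn pins down the horizon $T$ and the damping constants $\Gcoef_k, \Ccoef_k$), and finally invoke \Cref{prop:stochastic:subopt-bound} to read off the localization and suboptimality guarantees. All randomness is discharged at the beginning; the rest is pathwise, mirroring the exact-gradient argument used to prove \Cref{thm:deterministic-main}.

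Concretely, first I would let $\mathcal{E} := \bigcap_{k=2,4,8,\ldots}\bigcap_{j=0}^{2^k} \ev[B,\GcoefK,\CcoefK](2^j \etainit)$, which holds with probability at least $1-\delta$ by \Cref{lem:concentration-master} under the setting~\eqref{eq:coef-k-setting}. Conditioning on $\mathcal{E}$, \Cref{lem:stochastic-eta-max} yields $k \le 2 \log\log_+ \frac{\etamax(\GcoefK[0],\CcoefK[0])}{\etainit}$ at termination; plugging $\CcoefK[0] \ge (32 C_0 L)^2$ into the definition of $\etamax$ gives $\etamax = O(\d[0]/L)$, so $k = O\bigl(\log\log_+ \tfrac{\d[0]}{\etainit L}\bigr)$, which together with $T_k = \lfloor B/(2k)\rfloor$ produces~\eqref{eq:stochastic-T-lb}. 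The budget accounting (each \rootfinding{} call at outer iteration $k$ uses at most $k \cdot T_k \le B/2$ queries, and the geometric doubling of $k$ ensures the cumulative total stays below $B$) follows from the same argument as in the deterministic case. The two suboptimality bounds then come directly from the two cases of \Cref{prop:stochastic:subopt-bound}: the distance bound $\norm{\bar x - \xopt} \le (1 + \tfrac{4\Gcoef_k}{\Gcoef_k-2})\d[0] \le 6\d[0]$ uses that $\Gcoef_k \ge 32^2 C_k$ is large, and the error bound $O\bigl(\d[0]\sqrt{\Gcoef_k G(\eta') + \Ccoef_k}/T\bigr)$ matches~\eqref{eq:stochastic-main-bound-normal-case} after substituting $\Gcoef_k = O(C_k)$ and $\Ccoef_k = O(C_k^2 L^2)$; the edge case~\eqref{eq:stochastic-main-bound-edge-case} is analogous. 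The degenerate branch at line~\ref{line:main-alg-edge-case-termination} returning $x_0$ corresponds to $T=1$, $\bar x = x_0$, which is easily seen to satisfy the edge bound.

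The main obstacle is the arithmetic bookkeeping that converts $C_k$ (which itself depends on $k$) into the single $C$ appearing in the theorem statement. Since $C_k = 2k + \log(60\log^2(6B)/\delta)$ and $k = O\bigl(\log\log_+ \tfrac{\d[0]}{\etainit L}\bigr)$ at termination, one obtains $C_k = O\bigl(\log\tfrac{1}{\delta} + \log\log_+ \tfrac{B\d[0]}{\etainit L}\bigr) = O(C)$. The apparent circularity---$k$ depends on $\etamax$ which depends on $C_0$ which depends on $\log\log B$---collapses because all dependencies are doubly logarithmic. The remaining effort is a straightforward but finicky substitution of the specific $\GcoefK$, $\CcoefK$ values into the bounds of \Cref{prop:stochastic:subopt-bound} and verifying the constants line up with the stated $6\norm{x_0-\xopt}$ localization factor.
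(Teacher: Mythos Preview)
Your plan is essentially the paper's own proof: condition on the master concentration event via \Cref{lem:concentration-master}, bound the outer index $k$ via \Cref{lem:stochastic-eta-max}, and read off localization and error from \Cref{prop:stochastic:subopt-bound}. Two small points where your write-up deviates from what actually works:

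First, the two cases of \Cref{prop:stochastic:subopt-bound} do \emph{not} map one-to-one onto conclusions~\eqref{eq:stochastic-main-bound-normal-case} and~\eqref{eq:stochastic-main-bound-edge-case}. The second case of the proposition bounds $\norm{\bar x - x_0}$, not $\norm{\bar x - \xopt}$; after adding $\d[0]$ via the triangle inequality you must split on whether $\etainit\sqrt{\Gcoef\G(\etainit)+\Ccoef}$ is small or large relative to $\d[0]$. When it is small you recover~\eqref{eq:stochastic-main-bound-normal-case} (with $\eta'=\etainit$), and only when it is large do you get~\eqref{eq:stochastic-main-bound-edge-case}. Second, the degenerate branch returning $\bar x = x_0$ satisfies~\eqref{eq:stochastic-main-bound-normal-case}, not the edge bound: $\norm{x_0-\xopt}\le 6\norm{x_0-\xopt}$ trivially and $f(x_0)-f(\xopt)\le L\d[0]$ by convexity, whereas~\eqref{eq:stochastic-main-bound-edge-case} would require $\d[0]=O(\etainit\sqrt{C\G+\Ccoef})$, which need not hold.
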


Let us compare our bounds to the best known prior bounds, which follow from  from online to batch conversion of parameter-free regret bounds.
\citet{mcmahan2014unconstrained} achieve an optimal parameter-free regret bound for algorithms that are not adaptive to gradient norms: 
For any user-specified $\varepsilon$ and gradient budget $B$, their result guarantees an \emph{expected} optimality gap of $O\prn[\big]{\frac{\varepsilon + {\d[0] L}\sqrt{\Lambda}}{\sqrt{B}}}$ where $\Lambda = \log \prn[\big]{ 1+ \frac{\d[0] L}{ \varepsilon} }$ is logarithmic in $\frac{1}{\varepsilon}$. In comparison, by taking $\etainit = O(\frac{\varepsilon}{L^2 B})$ we guarantee a probability $1-\delta$ optimality gap of $O\prn[\big]{\frac{(\varepsilon + {\d[0] L})\lambda^2 }{\sqrt{B}}}$,
where $\lambda = \log \prn*{\frac{1}{\delta} \log_+ \frac{B \d[0] L}{\varepsilon}}$ is only double-logarithmic in $\frac{1}{\varepsilon}$; see \Cref{app:stochastic:main-coro-proof} for a slightly tighter bound in this  setting.

\citet{cutkosky2018black} provide the best known parameter-free regret bound for algorithms adaptive to gradient norms. Letting $G=\sum_{i<B} \norm{\oracle(x_i)}^2$ denote the sum of all squared gradient norms observed during optimization, they guarantee expected optimality gap  $O\prn[\big]{\frac{\varepsilon + {\d[0]}\E \sqrt{\Lambda G + {\Lambda}^2 L^2}}{B}}$ for $\Lambda = \log \prn[\big]{ 1+ \frac{\d[0] \sqrt{G}}{ \varepsilon}}$. In comparison, 
substituting the somewhat crude bound $\G(\eta') \le G$, we guarantee a probability $1-\delta$ optimality gap of $O\prn[\big]{\frac{\varepsilon \lambda^2 + {\d[0] \sqrt{\lambda^2 G  +\lambda^3 L^2 }} }{B}}$ for the double-logarithmic $\lambda$ defined above.

For small values of $\varepsilon$, our bounds show a clear asymptotic improvement over prior art. However, we note that for a hypothetical optimally-tuned $\varepsilon$ (which depends on the unknown problem parameter $\d[0]$), the logarithmic factor $\Lambda$ of prior work becomes $O(1)$, while our double-logarithmic factor $\lambda$ remains $O(\log(\frac{1}{\delta}\log B))$. This occurs because Lemma~\ref{lem:stochastic-eta-max} only provides a somewhat loose bound on $\eta_{\max}$, and because of the union bound in the proof of \Cref{prop:concentration-single-eta}. We can mitigate this issue at a cost of adaptivity to gradient norm; see \Cref{sec:alternative-bisection} for further discussion.

Our results give the the first high-probability parameter-free rates. However,
while high-probability bounds are generally considered stronger than expectation bounds, it is not clear how to deduce an expectation bound from our results without increasing the error by a $\mathsf{poly}(\log B)$ factor, due to the need to set $\delta = \mathsf{poly}(1/B)$. 
Finally, we note that \Cref{thm:stochastic-main} also guarantees that the output of the algorithm is at most a multiplicative factor further from $\xopt$ than $x_0$ was; we believe that this type of guarantee is new in the parameter-free setting.\footnote{\citet[Lemma 25]{orabona2021parameter} bound the distance moved by Follow the Regularized Leader iterates, but not by a multiple of $\norm{\xopt-x_0}$.}

\section{Adaptivity to problem structure}\label{sec:adaptivity}

\arxiv{
In this section we showcase our algorithm's adaptivity by proving stronger rates of convergence under smoothness and strong-convexity assumptions, without introducing any new parameters.
}

\subsection{Adaptivity to smoothness with exact gradients}\label{sec:adaptivity-smooth}

Let us assume that $f$ is $S$-smooth (i.e., has $S$-Lipschitz gradient), and consider for simplicity the exact gradient setting; we believe that similar results extend to the stochastic setting as well. Under these assumptions, we show that \Cref{alg:main}, \emph{without any changes}, achieves (up to double-logarithmic factors) the  $S\d[0]^2 /T$ suboptimality bound of optimally-tuned GD, as long as $\etainit < \frac{1}{2S}$. See \Cref{app:proof-of:thm:adaptivity-to-smoothness} for proof.

\begin{theorem}\label{thm:adaptivity-to-smoothness}
Consider the noiseless regime and assume $f$ is $\sm$-smooth. Then \Cref{alg:main}, with gradient budget $B\in\N$, parameters $\GcoefK=3$, $\CcoefK=0$ (as in \Cref{thm:deterministic-main}), and $\etainit \le \frac{1}{2 \sm}$,  returns $\bar{x}\in\R^d$ that satisfies%
\notarxiv{$f(\bar{x}) - f(\xopt) \le  O\prn*{ \frac{S\norm{x_0 - \xopt}^2 }{B}   \log \log_+ \frac{\norm{\xopt - x_0}}{\etainit \norm{g_0}}  }$.}%
\arxiv{\begin{equation*}
	f(\bar{x}) - f(\xopt) \le  O\prn*{ \frac{S\norm{x_0 - \xopt}^2 }{B}   \log \log_+ \frac{\norm{\xopt - x_0}}{\etainit \norm{g_0}}  }.
\end{equation*}}
\end{theorem}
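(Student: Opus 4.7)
My plan is to show $f(\bar{x}) - f(\xopt) = O(\sm \d[0]^2/T)$ in both output cases of \Cref{thm:deterministic-main} and then combine with its lower bound on $T$ to obtain the stated rate. In the edge case $\eta = \etainit$, the assumption $\etainit \le 1/(2\sm)$ lets me invoke the standard smooth descent inequality $f(x_{i+1}) \le f(x_i) - (\eta/2)\norm{g_i}^2$; telescoping and using $f(x_0) - f(\xopt) \le (\sm/2)\d[0]^2$ yields $\G(\etainit) \le \sm \d[0]^2/\etainit$, so plugging into \eqref{eq:deterministic-main-bound-edge-case} gives $f(\bar{x}) - f(\xopt) \le 2\sm \d[0]^2/T$ as desired.

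The normal case requires more work. Tracing the proof of \Cref{prop:subopt-bound}, I may take $\eta' = \etahifinal \in [\eta, 2\eta]$ in \eqref{eq:deterministic-main-bound-normal-case}, reducing the task to bounding $\sqrt{\G(\etahifinal)} = O(\sm \d[0])$. I would then split on whether $\etahifinal \le 1/(2\sm)$. When it does, I apply co-coercivity $\norm{g_i(\etahifinal)}^2 \le 2\sm(f(x_i(\etahifinal)) - f(\xopt))$ along the $\etahifinal$-trajectory, sum, and use \Cref{lem:gm-error-bound-r} (noiseless form) to bound $\sum_i (f(x_i(\etahifinal)) - f(\xopt)) \le \rbar(\etahifinal)\d[0]/\etahifinal + \etahifinal \G(\etahifinal)/2$. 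Combining yields $\G(\etahifinal)(1 - \sm \etahifinal) \le 2\sm \rbar(\etahifinal)\d[0]/\etahifinal$, which for $\etahifinal \le 1/(2\sm)$ simplifies to $\G(\etahifinal) \le 4\sm \rbar(\etahifinal)\d[0]/\etahifinal$. Substituting into the bisection exit condition $\rbar(\etahifinal) < \etahifinal\sqrt{3\G(\etahifinal)}$ produces the self-referential inequality $\rbar(\etahifinal)^2 < 12\sm \etahifinal \rbar(\etahifinal)\d[0]$, whose solution is $\rbar(\etahifinal) < 12\sm \etahifinal \d[0]$ and hence $\G(\etahifinal) \le 48 \sm^2 \d[0]^2$; plugging into \eqref{eq:deterministic-main-bound-normal-case} gives the target bound. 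When instead $\etahifinal > 1/(2\sm)$, the invariant $\etahifinal \le 2\etalofinal$ at bisection termination forces $\etaout \ge \etalofinal > 1/(4\sm)$, and I use \Cref{lem:gm-error-bound-r} directly at $\etaout$: the first term $\rbar(\etaout)\d[0]/(\etaout T) \le 3\d[0]^2/(\etaout T) \le 12\sm \d[0]^2/T$ uses $\rbar(\etaout) \le 3\d[0]$ from \Cref{lem:distance-bound,lem:output-properties}, and the second term $\etaout \G(\etaout)/(2T)$ is controlled via \Cref{lem:output-properties}'s inequality $\etaout\sqrt{3\G(\etaout)} \le \rbar(\etalofinal) \le 3\d[0]$, giving $\etaout \G(\etaout) \le 3\d[0]^2/\etaout \le 12\sm \d[0]^2$.

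The hard part will be the small-$\etahifinal$ branch: the descent bound alone gives only $\G(\etahifinal) \le \sm\d[0]^2/\etahifinal$, which blows up as $\etahifinal \to 0$ and so cannot yield the desired $O(\sm\d[0])$ control on $\sqrt{\G(\etahifinal)}$. The workaround is the co-coercivity bootstrapping step outlined above, which leverages the smoothness-based control on $\G(\etahifinal)$ to squeeze $\rbar(\etahifinal)$ below its naive $O(\d[0])$ bound down to $O(\sm \etahifinal \d[0])$, in turn yielding a tight $O(\sm^2 \d[0]^2)$ bound on $\G(\etahifinal)$ that is independent of $\etahifinal$. Combining the edge and normal case bounds with $T \ge B/(12 \log\log_+(\norm{\xopt-x_0}/(\etainit\norm{g_0})))$ from \Cref{thm:deterministic-main} then completes the proof.
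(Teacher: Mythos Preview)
Your argument is correct and reaches the same $O(\sm\d[0]^2/T)$ bound, but the route differs from the paper's in both the case split and the key inequalities. The paper splits on $\etaout$ versus $1/(2\sm)$ and in each branch runs a self-bounding argument on $\sum_{i<T}(f(x_i)-f(\xopt))$ using co-coercivity together with the descent bound $\eta\G(\eta)\le \sm\d[0]^2$. You instead split on $\etahifinal$ versus $1/(2\sm)$: in the large branch you avoid co-coercivity altogether, bounding both terms of \Cref{lem:gm-error-bound-r} directly from $\etaout>1/(4\sm)$ and \Cref{lem:output-properties,lem:distance-bound}; in the small branch you combine co-coercivity with the exit condition $\etahifinal>\bt(\etahifinal)$ to derive the sharper localization $\rbar(\etahifinal)<12\sm\etahifinal\d[0]$, from which $\G(\etahifinal)=O(\sm^2\d[0]^2)$ follows directly and plugs into \eqref{eq:deterministic-main-bound-normal-case} without any self-bounding. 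One minor omission: your ``normal case'' presupposes the bisection reached \cref{line:bisection-normal-termination} (so that $\etahifinal$ exists), but \eqref{eq:deterministic-main-bound-normal-case} can also hold when $\etainit>\bt(\etainit)$ and the subroutine returns at \cref{line:bisection-etalo-check}, or when the algorithm exits at \cref{line:main-alg-edge-case-termination} with $\bar{x}=x_0$; the former is covered verbatim by your small-$\etahifinal$ argument with $\etahifinal$ replaced by $\etainit$, and the latter by $f(x_0)-f(\xopt)\le\tfrac{\sm}{2}\d[0]^2$.
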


We remark that UniXGrad~\cite{kavis2019unixgrad} features optimal (accelerated) rates for smooth problems without dependence on the parameter $S$. However, unlike our method, it requires knowledge of $\d[0]$.

\subsection{Adaptivity strong convexity using restarts}\label{sec:adaptivity-sc}
\newcommand{\pind}[1]{^{(#1)}}
\newcommand{\pftuner}{\textsc{ParameterFreeTuner}}

We now consider a standard strongly-convex stochastic setup~\cite[e.g.,][]{hazan2014beyond,shamir2013stochastic} in which we assume $f$ to be $\mu$-strongly-convex in $\xset$ and admit a stochastic gradient oracle bounded by $L$. (Note that this implies a bound of $L/\mu$ on the diameter of $\xset$). \citet{hazan2014beyond} propose to run SGD for epochs of  doubling length and halving step sizes. For a total gradient budget of $B$, they obtain the optimal bound $O(L^2/(\mu B))$ on the expected optimality gap. However, their scheme requires the initial step size to be proportional to $1/\mu$, and hence requires knowledge of $\mu$. 

We show that restarting \Cref{alg:main} with doubling gradient budgets (and no step size to tune) recovers (up to double-logarithmic factors) the optimal $1/B$ rate of convergence. To describe the procedure formally, let $\pftuner(x_0, B, \delta, \etainit)$ denote the output of \Cref{alg:main} with initial point $x_0$, gradient budget $B$, failure probability $\delta$, minimal step size $\etainit$ and $\GcoefK, \CcoefK$ as in~\cref{eq:coef-k-setting}. For user-specified $\varepsilon>0$ and $\delta\in(0,1)$ and $x\pind{0}=x_0$, our doubling procedure is 
\begin{equation*}
	x\pind{m} \gets \pftuner\prn*{ x\pind{m-1}, B\pind{m} \defeq 2^m, \delta\pind{m} \defeq \tfrac{1}{m(m+1)} \delta, \etainit\pind{m} \defeq \frac{\varepsilon}{L^2 B\pind{m} }}.
\end{equation*}

\newcommand{\levelset}{\mathcal{L}}
\renewcommand{\sb}{b}
\newcommand{\Bscv}{B}
\newcommand{\Cscv}{C}

\begin{theorem}\label{thm:strong-convexity-result}
For any $\varepsilon > 0$, $\delta\in(0,1)$, and $M\in\N$, computing $x\pind{M}$ in the procedure described above requires at most $B=2^{M+1}$ gradient queries. 
If $f$ is $\mu$-strongly convex and has a stochastic gradient oracle bounded by $L$, then with probability at least $1-\delta$ we have
\notarxiv{$f(x^{(M)}) - f(x_\star)  \le O\prn*{
 \frac{L^2 / \mu + \varepsilon}{B} \log^3 \prn*{\frac{1}{\delta}\log_+ \frac{B \norm{x_0-\xopt}L}{\varepsilon}}}$.}
\arxiv{
\begin{equation*}
	f(x^{(M)}) - f(x_\star)  \le O\prn*{
		\frac{L^2 / \mu + \varepsilon}{B} \log^3 \prn*{\frac{1}{\delta}\log_+ \frac{B \norm{x_0-\xopt}L}{\varepsilon}}}.
\end{equation*}
}
\end{theorem}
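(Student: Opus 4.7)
The plan is to run an epoch-based induction: each epoch invokes \Cref{thm:stochastic-main}, strong convexity converts the per-epoch suboptimality into a distance bound for the next epoch, and the geometric doubling of $B\pind{m}$ propagates an $(L^2/\mu+\varepsilon)/B\pind{m}$-type bound with polylog overhead, mirroring the epoch-SGD analysis of \citet{hazan2014beyond} but with $\pftuner$ in place of a $\mu$-dependent tuned SGD.

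I would begin with two bookkeeping observations. The total budget is $\sum_{m=1}^{M} B\pind{m} = 2^{M+1}-2 \le 2^{M+1} = B$, as claimed. Since Theorem~\ref{thm:stochastic-main} applied to epoch $m$ succeeds with probability $1-\delta\pind{m}$ and $\sum_m \delta\pind{m} = \delta \sum_m \tfrac{1}{m(m+1)} \le \delta$, a union bound ensures that every epoch's guarantee holds simultaneously on an event of probability at least $1-\delta$, on which I condition throughout the rest of the argument.

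Next, I would set up the inductive claim $\Delta_m := f(x\pind{m})-f(\xopt) \le K (L^2/\mu + \varepsilon)/B\pind{m}$ with $K$ a power of some $C_\star = O(\log(\tfrac{1}{\delta}\log_+ \tfrac{BL\|x_0-\xopt\|}{\varepsilon}))$ that uniformly bounds the per-epoch log constant $C$ of \Cref{thm:stochastic-main}. The base case $m=0$ uses that $\mu$-strong convexity together with $L$-bounded stochastic gradients forces $\|x\pind{0}-\xopt\| \le L/\mu$, so $\Delta_0 \le L\cdot L/\mu = L^2/\mu$. For the inductive step, I would invoke \Cref{thm:stochastic-main} on epoch $m+1$ with starting point $x\pind{m}$, and use strong convexity to translate the previous suboptimality into an initial distance $d_0\pind{m+1} \le \sqrt{2\Delta_m/\mu}$. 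Exploiting $\G(\eta')\le L^2 T$ together with the lower bound $T\pind{m+1} \gtrsim B\pind{m+1}/\log\log_+(\cdot)$, the ``normal'' case of \Cref{thm:stochastic-main} yields $\Delta_{m+1} \lesssim L\sqrt{C_\star \Delta_m/(\mu B\pind{m+1})}$; substituting the inductive hypothesis and using $B\pind{m+1} = 2 B\pind{m}$ shows that $K = O(C_\star)$ (up to possibly a small higher power arising from lower-order terms) closes the recursion. The ``edge'' case (where the tuner returns in the $\eta=\etainit\pind{m+1}$ branch), combined with the prescribed choice $\etainit\pind{m+1} = \varepsilon/(L^2 B\pind{m+1})$, collapses Theorem~\ref{thm:stochastic-main}'s bound to $\Delta_{m+1}\lesssim \varepsilon \cdot \mathrm{poly}(C_\star)/B\pind{m+1}$, again matching the hypothesis via the $\varepsilon$ component.

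The main obstacle is to track the polylog factors cleanly. The per-epoch constant $C\pind{m}$ in \Cref{thm:stochastic-main} depends on $\delta\pind{m}$, $B\pind{m}$, $\etainit\pind{m}$, and $d_0\pind{m}$, so its uniform bound by $C_\star$ requires combining (i) $m \le M = O(\log B)$, which absorbs $\log(m^2/\delta)$ into $O(\log(1/\delta) + \log\log B)$; (ii) the domain-diameter bound $d_0\pind{m}\le L/\mu$; and (iii) $\etainit\pind{m} \ge \varepsilon/(L^2 B)$. A secondary subtlety is that \Cref{thm:stochastic-main} has two output cases; the inductive hypothesis must be consistent with both, which is precisely why the bound carries the sum $L^2/\mu + \varepsilon$---the $L^2/\mu$ summand absorbs the strong-convexity-driven normal case and the $\varepsilon$ summand absorbs the edge case. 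Finally, one must verify the early epochs (very small $B\pind{m}$, where \Cref{alg:main} may return $x_0$ via line~\ref{line:main-alg-edge-case-termination}) are harmless: they cost at most a constant total budget and the trivial bound $\Delta_m \le L^2/\mu$ satisfies the inductive claim until $B\pind{m}$ is large enough for the non-trivial case to kick in.
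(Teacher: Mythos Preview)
Your proposal is correct and follows essentially the same route as the paper: the same budget and union-bound bookkeeping, the same per-epoch invocation of \Cref{thm:stochastic-main}, the same use of strong convexity to convert suboptimality into distance, and the same uniform polylog constant over all epochs. The only noteworthy difference is in how the recursion is resolved: the paper writes $\Delta_m \le \max\{\tileps/B\pind{m},\sqrt{(2\tilL^2/\mu B\pind{m})\Delta_{m-1}}\}$ and applies $\sqrt{ab}\le\max\{2a,b/2\}$ to obtain a halving-or-floor iteration that unrolls directly, whereas you posit the inductive invariant $\Delta_m\le K(L^2/\mu+\varepsilon)/B\pind{m}$ and verify closure; both are standard and yield the same $\lambda^3$ overhead once the lower-order $C^2L^2$ and $1/T$ terms are accounted for.
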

\noindent
See proof in \Cref{app:proof-of:thm:strong-convexity-result}. Compared to results obtained via parameter-free strongly-convex regret bounds~\cite[Thm.~7]{cutkosky2018black}, we remove a squared logarithmic factor, breaking two  regret minimization barriers at once.

\arxiv{\newcommand{\acks}[1]{\section*{Acknowledgment} #1}}
\acks{
We thank Shira Baneth and Mikey Shechter for pointing out typos in an earlier version of this paper.
YC was partially supported by 
the Len Blavatnik and the Blavatnik Family Foundation and an Alon Fellowship.
OH was partially supported by the Pitt Momentum Funds.
}

\newpage

\arxiv{\bibliographystyle{abbrvnat}}

\newpage

\appendix

\section{Proofs for Section~\ref{sec:deterministic}}\label{app:proof-of:deterministic}

\subsection{Proof of Lemma~\ref{lem:output-properties}}\label{app:proof-of:lem:output-properties}

\begin{proof} 
	Consider the case when $\rootfinding$ returns $\etaout=\etalofinal$, i.e., when 
	\begin{equation*}
		\frac{\rbar(\etalofinal)}{2\rbar(\etahifinal)}\bt(\etahifinal) \overle{(i)} 
		\half \etahifinal 
		\overle{(ii)} 
		\etalofinal
		\overle{(iii)}
		\bt(\etalofinal)
	\end{equation*}
	due to $(i)$ the condition for returning $\etaout=\etalofinal$, $(ii)$ the bisection termination condition $\etahifinal \le 2\etalofinal$, and $(iii)$ the bisection invariant. Substituting $\bt(\etaout) = \rbar(\etaout)/\sqrt{\Gcoef \G(\etaout) + \Ccoef}$
	into the above display yields
	\begin{equation}\label{eq:eta-sandwitch-lo}
		\frac{\rbar(\etalofinal)}{2\sqrt{\Gcoef \G(\etahifinal) + \Ccoef}}
		\le 
		\etalofinal
		\le 
		\frac{\rbar(\etalofinal)}{\sqrt{\Gcoef \G(\etalofinal) + \Ccoef}}.
	\end{equation}
	Next, consider the case when $\rootfinding$ returns $\etaout=\etahifinal$, i.e., when 
	\begin{equation}\label{eq:eta-sandwitch-hi}
		\frac{\rbar(\etahifinal)}{\sqrt{\Gcoef \G(\etahifinal) + \Ccoef}}
		=
		\bt(\etahifinal) \overle{(i)} \etahifinal \overle{(ii)} \frac{\rbar(\etalofinal)}{\rbar(\etahifinal)}\bt(\etahifinal)
		=
		\frac{\rbar(\etalofinal)}{\sqrt{\Gcoef \G(\etahifinal) + \Ccoef}},
	\end{equation}
	due to $(i)$ the bisection invariant, and $(ii)$  the condition for returning $\etaout=\etahifinal$. 
	
	Finally, $\rbar(\etaout) \le \rbar(\etalofinal)$ holds trivially if $\etaout = \etalofinal$, and if $\etaout = \etahifinal$ then it holds
	by \eqref{eq:eta-sandwitch-hi}; similarly $\sqrt{\Gcoef \G(\etaout) + \Ccoef} \le 2  \sqrt{\Gcoef \G(\etahifinal) + \Ccoef}$ holds either trivially or from~\eqref{eq:eta-sandwitch-lo}.
\end{proof}

\subsection{Proof of Proposition~\ref{prop:subopt-bound}}\label{app:proof-of:subopt-bound}

\begin{proof}
We begin with the first case of the proposition, assuming that the initial $\etalo,\etahi$  satisfy $\etalo \le  \bt(\etalo)$ and $\etahi > \bt(\etahi)$ so that $\rootfinding$ terminates in  \cref{line:bisection-normal-termination} with final interval $[\etalofinal,\etahifinal]$. In this case, we have the following error bound
\begin{flalign*}
	f(\bar{x}) - f(x_\star) & \overle{(i)} \frac{\rbar(\etaout)\d[0]}{\etaout T} + \frac{\etaout \G[T](\etaout)}{2T} \numberthis\label{eq:gm-error-bound-r-restate}\\
	&\overle{(ii)} \frac{ 2 \d[0]\sqrt{\Gcoef \G(\etahifinal) + \Ccoef} }{T} +  \frac{ \rbar(\etalofinal)}{2 \Gcoef T} \sqrt{\Gcoef \G[T](\etaout) + \Ccoef} \\
	&\overle{(iii)} \frac{ 2 \d[0]\sqrt{\Gcoef \G(\etahifinal) + \Ccoef} }{T} +  \frac{\d[0]}{T (\Gcoef - 1)} \sqrt{\Gcoef \G[T](\etaout) + \Ccoef} \\
	&\overle{(iv)} \frac{ 2 \d[0]\sqrt{\Gcoef \G(\etahifinal) + \Ccoef} }{T} + \frac{2 \d[0]}{T (\Gcoef - 1)}  \sqrt{\Gcoef \G[T](\etahifinal) + \Ccoef} \\
	&\overeq{\phantom{(iv)}} \frac{2 \alpha}{\alpha - 1} \cdot \frac{\d[0] \sqrt{\Gcoef \G[T](\etahifinal) + \Ccoef} }{T},
\end{flalign*}
due to $(i)$ \Cref{lem:gm-error-bound-r}, 
$(ii)$ \Cref{lem:output-properties}, 
$(iii)$ \Cref{lem:distance-bound} and 
$(iv)$ \Cref{lem:output-properties} again.	 Moreover, we have $\norm{\bar{x}-x_0} \le \rbar(\etaout) \le \rbar(\etalofinal) \le  \frac{2\Gcoef}{\Gcoef-1}\d[0]$ from \Cref{lem:output-properties,lem:distance-bound}.

Next, consider that case where $\rootfinding$ returns the initial $\etalo$, i.e., where $\etalo > \bt(\etalo) = \rbar(\etalo) / \sqrt{\Gcoef \G(\etalo) + \Ccoef}$. Rearranging this condition immediately yields the claimed bound on $\norm{\bar{x}(\etalo) - x_0} \le \rbar(\etalo)$. Substituting the lower bound on $\etaout$ into the error bound~\eqref{eq:gm-error-bound-r-restate} completes the proof.
\end{proof}

\subsection{Proof of Lemma~\ref{lem:eta-max}}\label{app:proof-of:lem:eta-max}

\begin{proof}
	Assume by contradiction that $\eta > \etamax$ but $\eta \le \bt(\eta)$. On the one hand, 
	\begin{equation*}
		\frac{\rbar(\eta)}{\sqrt{\Gcoef \norm{g_0}^2 + \Ccoef}} \ge 
		\frac{\rbar(\eta)}{\sqrt{\Gcoef \G(\eta) + \Ccoef}} = \bt(\eta) \ge \eta > \etamax =  \frac{2\Gcoef}{\Gcoef-1}\cdot \frac{ \d[0]}{ \sqrt{\Gcoef \norm{g_0}^2 + \Ccoef}}.
	\end{equation*}
	which implies $\rbar(\eta) > \frac{2\Gcoef}{\Gcoef -1}\d[0]$.
	On the other hand, by \Cref{lem:distance-bound}, we have $\rbar(\eta) \le \frac{2\Gcoef}{\Gcoef -1}\d[0]$ which yields a contradiction.
	
	By the discussion so far, the doubling scheme in \Cref{alg:main} must stop when $2^{2^k}\etainit > \etamax$. 
	Therefore, if it stops with $k=\kout$ we may conclude that either $\kout = 2$ or $2^{2^{\kout/2}}\etainit \le \etamax$, as otherwise it would have stopped with $\kout/2$ instead. Rearranging yields $\kout \le 2 \log \log_+ \frac{\etamax}{\etainit}$ as claimed.
\end{proof}

\subsection{Proof of Theorem~\ref{thm:deterministic-main}}\label{app:proof-of:thm:deterministic-main}

\begin{proof}
	Let us first show that \Cref{alg:main} never makes more than $B$ gradient queries. The algorithm repeatedly calls $\rootfinding$, with parameters $T=T_k = \floor{\frac{B}{2k}}\le\frac{B}{2k}$ and $\etahi/\etalo = 2^{2^k}$ for $k=2,4,8,\ldots$, until it returns $\etaout < \infty$ (or $k$ passes $B/8$). In the iterations where $\rootfinding$ returns $\etaout = \infty$ it uses only $T_k$ subgradient queries (a single evaluation of $\bt$). In the final iteration where $\rootfinding$ returns $\etaout < \infty$, it evaluates $\bt$ at most $k+2$ times: once at \cref{line:bisection-etahi-check}, once at \cref{line:bisection-etalo-check}, and $\log\log(\etahi/\etalo) = k$ times at \cref{line:bisection-etamid-check} during the bisection. Therefore, letting $k'=2^{j'}$ denote the index of the final iteration, the total query complexity of the final bisection call is 
	\begin{equation*}
		(k'+2) T_{k'}\le (2+k') \frac{B}{2k'} \le \frac{B}{2} + \frac{B}{2^{j'}},
	\end{equation*}
	 and the complexity of all preceding bisection calls is at most \begin{equation*}
	 	\sum_{j=1}^{j'-1} T_{2^j} \le \frac{B}{2}\sum_{j=1}^{j'-1} 2^{-j} = \frac{B}{2} - \frac{B}{2^{j'}},
	 \end{equation*}
	 giving a total complexity bound of $B$.

	Next, we establish \eqref{eq:determinisitic-T-lb}. Note that \Cref{alg:main} indeed always returns a point of the form $\bar{x} = \frac{1}{T}\sum_{i< T}x_i(\eta)$ with some $T\ge 1$; the edge case of returning in \cref{line:main-alg-edge-case-termination} corresponds to $T=1$. Moreover, in the typical case of returning in \cref{line:main-alg-normal-termination}, we have $T=\floor*{\frac{B}{2k}} \ge \frac{B}{4k}$ for some $k\le B/4$. By \Cref{lem:eta-max}, we have 
	\begin{equation*}
		k \le 2 \log \log_+ \frac{\etamax}{\etainit} = 2 \log \log_+ \frac{\sqrt{3} \d[0]}{\etainit  \norm{g_0}} \le 3\log \log_+ \frac{\d[0]}{\etainit  \norm{g_0}}, 
	\end{equation*}
	giving the claimed lower bound~\eqref{eq:determinisitic-T-lb} on $T$. 
	
	It remains to show that one of the  conclusions~\eqref{eq:deterministic-main-bound-normal-case} and~\eqref{eq:deterministic-main-bound-edge-case} must hold. When \Cref{alg:main} returns in \cref{line:main-alg-normal-termination}, this  follows immediately from~\Cref{prop:subopt-bound} (if 
	$\etainit \le \bt(\etainit)$ then conclusion~\eqref{eq:deterministic-main-bound-normal-case} holds; if  
	$\etainit > \bt(\etainit)$ 
	then either 
	$\etainit \sqrt{3 \G(\etainit)} \le 3\d[0]$
	and
	conclusion~\eqref{eq:deterministic-main-bound-normal-case} holds, or  $\d[0]\sqrt{3 \G(\etainit)} \le \etainit \G(\etainit)$ and conclusion~\eqref{eq:deterministic-main-bound-edge-case} holds).
	In the edge case of returning $\bar{x}=x_0$ in \cref{line:main-alg-edge-case-termination}, corresponding to $T=1$, conclusion~\eqref{eq:deterministic-main-bound-normal-case} clearly holds, as $\norm{x_0-\xopt}\le 4\norm{x_0-\xopt}$ trivially and $f(x_0)-f(\xopt) \le \inner{g_0}{x_0-\xopt} \le \norm{x_0-\xopt}\norm{g_0}$ due to convexity of $f$. We remark that due to inequality~\eqref{eq:determinisitic-T-lb},  the $T=1$ edge case is only possible for a very small iteration budget $B = O(\log\log_+ \frac{\norm{x_0-\xopt}}{\etainit\norm{g_0}})$.
\end{proof}

\section{Proofs for \Cref{sec:stochastic}}

\subsection{Proof of \Cref{lem:stochastic-distance-bound}}\label{app:stochastic:lem:distance-bound}

\begin{proof}
	The proof proceeds similarly to the proof of \Cref{lem:distance-bound}, except that instead of assuming exact subgradients we make use of the definition of $\ev$.
	As usual in proofs where $\eta$ is fixed, we drop the explicit dependence on it from  $x_t, g_t, \d[t], \dbar[t], \r[t], \rbar[t]$ and $\G[t]$.

	We start with the following inequality, which holds for any $t\in\N$ by summing Eq.~\eqref{eq:standard-sgm-inequality} and recalling that $\gdiff[i] = g_i - \grad f(x_i)$,
	\begin{equation*}
		\d[t]^2 
				\le \d[0]^2 + \eta^2 \G[t] 
				- 2\eta \sum_{i< t} \inner{g_i}{x_i-\xopt}
		= \d[0]^2 + \eta^2 \G[t] - 2\eta \sum_{i< t} \inner{\grad f(x_i)}{x_i-\xopt} -2 \eta \sum_{i< t} \inner{\gdiff[i]}{x_i-\xopt}.
	\end{equation*}
	Noting that $\inner{\grad f(x_i)}{x_i - \xopt} \ge f(x_i)  - f(\xopt) \ge 0$ due to convexity and that $\sum_{i< t} \inner{\gdiff[i]}{x_i-\xopt} \ge -\frac{1}{4}\max\{\dbar[t],\eta\sqrt{\Ccoef}\}\sqrt{\Gcoef \G[t] + \Ccoef}$ for all $t\le T$ due to $\ev$ holding, we have
	\begin{equation*}
		\d[t]^2 \le \d[0]^2 + \eta^2 \G[t] 
		+ \frac{1}{2} \max\{\eta\dbar[t],\eta^2\sqrt{\Ccoef}\}\sqrt{\Gcoef \G[t] + \Ccoef}
	\end{equation*}
	for all $t \le T$. 
	
	Maximizing both sides of the inequality over $t\le T$ and recalling that $\eta \le \bt(\eta) = \rbar / \sqrt{\Gcoef \G + \Ccoef}$, we get
	\begin{equation*}
		\dbar^2 \le \d[0]^2 + \frac{1}{\Gcoef}\rbar^2 +  \max\crl*{ \frac{\rbar \dbar}{2}, \frac{\rbar^2}{2\sqrt{\Gcoef }} }.
	\end{equation*}
	Substituting $\rbar \le \dbar + \d[0]$ (which holds due to the triangle inequality), we get
	\begin{equation*}
		\dbar^2 \le \d[0]^2 + \frac{1}{\Gcoef} (\dbar + \d[0])^2 +  \max\crl*{ \frac{(\dbar + \d[0])^2}{2}, \frac{(\dbar + \d[0])^2}{2\sqrt{\Gcoef}} }
		= \d[0]^2 + \underset{1/\alpha'}{\underbrace{\prn*{\frac{1}{\Gcoef} + \frac{1}{2}}}} (\dbar + \d[0])^2,
	\end{equation*} 
	where the final equality is due to $\Gcoef > 1$. Thus, we arrive again at inequality~\eqref{eq:deterministic-quadratic-ineq} from the proof of \Cref{lem:distance-bound}, but with $\Gcoef$ replaced by $\Gcoef' = 2\Gcoef/(\Gcoef+2)$. We consequently find that
	\begin{equation*}
		\dbar \le \frac{\Gcoef' + 1}{\Gcoef' - 1}\d[0] = 
		\frac{3\Gcoef + 2}{\Gcoef - 2}\d[0]
		~~\mbox{and}~~
		\rbar \le \dbar + \d[0] \le \frac{4\Gcoef}{\Gcoef - 2}\d[0].
	\end{equation*}
\end{proof}

\subsection{Proof of \Cref{prop:stochastic:subopt-bound}}\label{app:prove-prop:stochastic:subopt-bound}

\begin{proof}		
	In both cases of the proposition, 
	the event $\ev(\etaout)$ holds
	(in the first case because $2^j \etalo$ for $j = 1, \dots, 2^k$ represents all possible values for $\etaout$; in the second case because $\etaout = \etalo$ by Line~\ref{line:bisection-etalo-check} of $\rootfinding$) and therefore,
	\begin{flalign}\label{eq:bound-delta-i-sum}
		\sum_{i < T} \inner{\gdiff[i](\etaout)}{ x_\star  - x_i(\etaout)} \le \frac{1}{4} \max\left\{ \dbar(\etaout) , \etaout \sqrt{\beta} \right\} \sqrt{\Gcoef \G[T](\etaout) + \Ccoef}.
	\end{flalign}

	We begin with the first case of the lemma (assuming $\etalo \le \etahi$ satisfy $\etalo \le  \bt(\etalo)$ and $\etahi > \bt(\etahi)$).
	In this case, \rootfinding{} terminates at \cref{line:bisection-normal-termination}.
	Let $\etalofinal$ and $\etahifinal$ denote the values of $\etalo$ and $\etahi$ at \cref{line:bisection-normal-termination} of $\rootfinding$, respectively.
	First, note that, by \eqref{eq:bound-delta-i-sum} and \Cref{lem:output-properties} we have
	\begin{flalign}
		\sum_{i < T} \inner{\gdiff[i](\etaout)}{ x_\star  - x_i(\etaout)} &\le  \frac{1}{4}
		\max\left\{ \dbar(\etaout) , \rbar(\etalofinal) \right\} \sqrt{\Gcoef \G[T](\etaout) + \Ccoef} \notag \\
		&\le \frac{\d[0] + \rbar[T](\etalofinal)}{4} \sqrt{\Gcoef \G[T](\etaout) + \Ccoef} .
		\label{eq:delta-i-sum-upperbound-in-main-case}
	\end{flalign}
	where the first inequality uses \eqref{eq:bound-delta-i-sum}, $\etaout \le \frac{\rbar(\etalofinal)}{\sqrt{\Gcoef \G(\etaout) + \Ccoef}}$ (\Cref{lem:output-properties}) and $\sqrt{\beta} \le \sqrt{\Gcoef \G(\etaout) + \Ccoef}$, the second inequality is by $\dbar(\etaout) \le \d[0] + \rbar[T](\etaout)$ and $\rbar(\etaout) \le \rbar(\etalofinal)$ (\Cref{lem:output-properties}).
	
	The remainder of the proof is very similar to the proof of \Cref{prop:subopt-bound}.
	Combining \eqref{eq:gm-error-bound-r}, \eqref{eq:output-property} and \eqref{eq:delta-i-sum-upperbound-in-main-case}  yields:
	\begin{flalign*}
		f(\bar{x}) - f(x_\star) %
		&\le \frac{ 2 \d[0]\sqrt{\Gcoef \G(\etahifinal) + \Ccoef} }{T} +  \frac{2 \alpha^{-1} \rbar(\etalofinal) + \d[0] + \rbar[T](\etalofinal)}{4 T} \sqrt{\Gcoef \G[T](\etaout) + \Ccoef} \\
		&\le \frac{ 2 \d[0]\sqrt{\Gcoef \G(\etahifinal) + \Ccoef} }{T} +  \frac{ (2 \alpha^{-1} + 1) \frac{4 \alpha}{\alpha - 2} + 1}{4 T} \d[0] \sqrt{\Gcoef \G[T](\etaout) + \Ccoef} \\
		&\le \frac{ 2 \d[0]\sqrt{\Gcoef \G(\etahifinal) + \Ccoef} }{T} +  \frac{ (2 \alpha^{-1} + 1) \frac{4 \alpha}{\alpha - 2} + 1}{2 T} \d[0] \sqrt{\Gcoef \G[T](\etahifinal) + \Ccoef} \\
		&= \frac{9\alpha - 2}{2(\alpha - 2)} \cdot \frac{\d[0] \sqrt{\Gcoef \G[T](\etahifinal) + \Ccoef} }{T} 
	\end{flalign*}
	where the first inequality substitutes \eqref{eq:output-property} and \eqref{eq:delta-i-sum-upperbound-in-main-case} into
	\eqref{eq:gm-error-bound-r}, the second inequality uses that $\rbar[T](\etalofinal) \le \frac{4 \alpha}{\alpha - 2} \d[0]$ by $\etalofinal \le \phi(\etalofinal)$ and \Cref{lem:stochastic-distance-bound}, the third inequality uses  
	$\sqrt{\Gcoef \G[T](\etaout) + \Ccoef} \le 2 \sqrt{\Gcoef \G[T](\etahifinal) + \Ccoef}$ by \Cref{lem:output-properties}, and the
	final equality is algebra.
	
	Finally, consider that case where $\rootfinding$ returns the initial $\etalo$, i.e., where $\etalo > \bt(\etalo) = \rbar(\etalo) / \sqrt{\Gcoef \G(\etalo) + \Ccoef}$. Rearranging this condition immediately yields the claimed bound on $\norm{\bar{x}(\etalo) - x_0} \le \rbar(\etalo)$. Substituting the lower bound on $\etalo$ into the error bound~\eqref{eq:gm-error-bound-r} and applying \eqref{eq:bound-delta-i-sum} using $\dbar(\etalo) \le \rbar(\etalo) + \d[0]$ completes the proof.
\end{proof}

\subsection{Proof of \Cref{lem:stochastic-eta-max}}\label{app:prove-lem:stochastic-eta-max}

\begin{proof}
	The proof is essentially identical to the proof of \Cref{lem:eta-max}, with \Cref{lem:stochastic-distance-bound} used instead of \Cref{lem:distance-bound}; we give it here for completeness.
	
	To show the first part of the lemma, assume that $\ev(\eta)$ holds and assume by contradiction that $\eta > \etamax(\Gcoef,\Ccoef)$ but $\eta \le \bt(\eta)$. On the one hand, 
	\begin{equation*}
		\frac{\rbar(\eta)}{\sqrt{\Gcoef \norm{g_0}^2 + \Ccoef}} \ge 
		\frac{\rbar(\eta)}{\sqrt{\Gcoef \G(\eta) + \Ccoef}} = \bt(\eta) \ge \eta > \etamax(\Gcoef,\Ccoef) =  \frac{4\Gcoef}{\Gcoef-2}\cdot \frac{ \d[0]}{ \sqrt{\Gcoef \norm{g_0}^2 + \Ccoef}}.
	\end{equation*}
	which implies $\rbar(\eta) > \frac{4\Gcoef}{\Gcoef -2}\d[0]$.
	On the other hand, by \Cref{lem:stochastic-distance-bound}, we have $\rbar(\eta) \le \frac{4\Gcoef}{\Gcoef -2}\d[0]$ which yields a contradiction.
	
	\Cref{alg:main} repeatedly invokes $\rootfinding$ with $\etahi$ values of the form $2^{2^k} \etainit$, $\Gcoef=\GcoefK$, and $\Ccoef=\CcoefK$ (for $k=2,4,\ldots$) until the bisection returns $\etaout < \infty$. This happens as soon as $\etahi >  \bt(\etahi)$, which holds (by the discussion above and since we assume $\ev[T_k, \CcoefK, \CcoefK](2^{2^k}\etainit)$ holds for all $k=2,4,\ldots$)  whenever $\etahi > \etamax(\GcoefK,\CcoefK)$.
	Therefore, if the algorithm returns with $k=\kout$ we may conclude that either $\kout = 2$ or $2^{2^{\kout/2}}\etainit \le \etamax(\GcoefK[\kout],\CcoefK[\kout])$, as otherwise we would have returned with $\kout/2$ instead. Rearranging yields $\kout \le 2 \log \log_+ \frac{\etamax(\GcoefK[\kout],\CcoefK[\kout])}{\etainit}$ as claimed. Finally, we note that $\etamax(\Gcoef,\Ccoef)$ is decreasing in both $\Gcoef$ and $\Ccoef$, and we may therefore replace $\GcoefK,\CcoefK$ by the larger values $\GcoefK[0],\CcoefK[0]$ as defined in eq.~\eqref{eq:coef-k-setting}.
\end{proof}

\subsection{A martingale concentration bound}

The following corollary is a translation of \cite[][Theorem 4]{howard2021time} which simplifies notation at the cost of looser constants. We remark that it  holds even when $\log$ denotes the natural logarithm (as is the convention in~\cite{howard2021time}).

\begin{corollary}[of {\citet[][Theorem 4]{howard2021time}}]\label{cor:kickass-mg-concentration}
Let $X_{t}$ be adapted to $\filt_{t}$ such that $\left|X_{t}\right|\le1$
with probability 1 for all $t$. Then, for every $\delta\in\left(0,1\right)$
and any $\hat{X}_{t}\in\filt_{t-1}$ such that $\abs{\hat{X}_{t}}\le1$
with probability 1,
\begin{equation*}
	\P\prn*{
		\exists t<\infty:\abs[\Bigg]{\sum_{s=1}^t \prn*{X_{s}-\Ex{X_{s}|\filt_{s-1}}}
		}
		\ge
		4 \sqrt{ A_t(\delta) \sum_{s= 1}^t \left(X_{s}-\hat{X}_{s}\right)^{2} + A_t^2(\delta)}
		\,}
	\le \delta,
\end{equation*}
where $A_t(\delta) = \log \prn*{
	\frac{60 \log(6t)}{\delta}}$. %
\end{corollary}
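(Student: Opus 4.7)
The strategy is to apply the empirical-Bernstein confidence sequence of \citet[Theorem 4]{howard2021time} directly to the centered process $Y_s \defeq X_s - \Ex{X_s \mid \filt_{s-1}}$ and then simplify the resulting boundary into the clean closed form stated above. Since $|X_s| \le 1$ a.s., we have $|Y_s| \le 2$, so $Y_s$ is a bounded martingale difference with respect to $\filt_s$. Howard et al.'s theorem gives a time-uniform bound of the form $|\sum_{s\le t} Y_s| \le \psi\bigl(\sum_{s\le t} V_s; \delta, t\bigr)$ where $V_s$ is any predictable upper bound on the conditional variance. The key observation is that $\mathrm{Var}(X_s \mid \filt_{s-1}) \le \Ex{(X_s - \hat{X}_s)^2 \mid \filt_{s-1}}$ for any $\hat{X}_s \in \filt_{s-1}$, and moreover $(X_s - \hat{X}_s)^2 \le 4$ a.s., so we may use $\hat{V}_s = (X_s - \hat{X}_s)^2$ as an adapted (not predictable) empirical proxy, which is exactly the setting that the empirical-Bernstein variant of Howard et al.\ is designed to handle.

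Next I would invoke the stitched/mixture boundary that Howard et al.\ provide for sub-$\psi_P$ processes (their Corollary stitching sub-exponential segments). Their bound has the shape $\sqrt{c_1 V_t \log\log(c_2 V_t) + c_3 \log(c_4/\delta)} + c_5 \log(c_4/\delta)$ with universal constants that depend on the choice of stitching geometric rate $\eta$ and the polynomial degree $s$ in the stitching polynomial. I would choose a convenient $\eta$ (say $\eta=2$) and $s$ so that the constants collapse, and replace $V_t$ by the looser running-time bound $t \cdot \sup_s |Y_s-\hat{Y}_s|^2 \le 4t$ inside the inner $\log\log$ term so that the boundary depends on $t$ only through the clean $\log(6t)$ factor and on $V_t = \sum_{s\le t}(X_s - \hat{X}_s)^2$ only through the outer square root.

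With those simplifications, the boundary becomes (up to a constant absorbed into the factor $4$ in front) $\sqrt{A_t(\delta) V_t + A_t^2(\delta)}$ where $A_t(\delta)$ is the log-log stitched rate $\log\!\bigl(\tfrac{c \log(c't)}{\delta}\bigr)$. The only remaining work is to pad the universal constants so that the final form matches $A_t(\delta) = \log(60 \log(6t)/\delta)$ and the pre-factor is $4$; this is a mechanical verification that the constants $(60,6,4)$ are large enough to dominate the product of constants arising from the stitching. I expect the main technical friction to be exactly this constant-tracking: Howard et al.'s bound is stated in substantial generality and the simplification has to carefully loosen the variance-dependent term while preserving the time-uniform guarantee across all $t < \infty$, and in particular across the regime $t=1$ where $\log\log(6t)$ is small. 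No further probabilistic argument beyond their theorem is needed; the corollary is purely a notational repackaging.
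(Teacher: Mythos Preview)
Your proposal is correct and follows essentially the same route as the paper: invoke \citet[Theorem 4]{howard2021time} with the polynomial stitched boundary, upper-bound the empirical variance $\sum_{s\le t}(X_s-\hat X_s)^2$ by $4t$ inside the logarithm to produce the $\log(6t)$ factor, and then pad constants until the prefactor $4$ and the $60$ inside $A_t(\delta)$ suffice. The only cosmetic differences are that the paper applies Theorem~4 directly to $X_s$ (the theorem already outputs a bound on the centered sum, so your detour through $Y_s$ and a separate conditional-variance inequality is unnecessary) and fixes the stitching parameters explicitly as $\eta=m=1.8$, $s=1.05$ rather than your suggested $\eta=2$.
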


\begin{proof}
	Throughout we proof we use the binary maximization notation $a\vee b \defeq \max\{a,b\}$.
	
	We apply \cite[][Theorem 4]{howard2021time} with $a=-b=1$ and the polynomial stitched boundary \cite[][Eq.~(10)]{howard2021time} with parameters $m,\eta,s\ge 1$ to be specified below. This yields
	\begin{equation*}
		\Pr*(
			\exists t < \infty : \abs[\Bigg]{\sum_{s\le t}\prn*{X_{s}-\Ex{X_{s}|\filt_{s-1}}}
			}
			\ge
			S_{\delta/2}\prn*{ m \vee \sum_{s\le t}\prn*{X_s -\hat{X}_s}^2 }
		) \le \delta,
	\end{equation*}
	where, for every $v'>0$
	\begin{equation*}
		S_{\delta/2}\left(v'\right):=k_{1}\sqrt{ v'\left(s\log\log\left(\frac{\eta v'}{m}\right)
			+\log\frac{2\zeta\left(s\right)}{\delta\log^{s}\eta}
			\right)
			}
		+2k_{2}\left(s\log\log\left(\frac{\eta v'}{m}\right)+\log\frac{2\zeta\left(s\right)}{\delta\log^{s}\eta}\right)
	\end{equation*}
	with $\zeta$ denoting the Riemann zeta function,
	\begin{equation*}
		k_{1}=\left(\eta^{1/4}+\eta^{-1/4}\right)/\sqrt{2}
		~~\mbox{and}~~
		k_{2}=\left(\sqrt{\eta}+1\right)/2.
	\end{equation*}

	Let us first simplify $S_{\delta/2}\left(m\vee v\right)$ and then choose
	the parameters $m,\eta,s$ to yield decent constants. Writing $Z = s\log\log\left(\frac{\eta (m \vee v)}{m}\right)+\log\frac{2\zeta\left(s\right)}{\delta\log^{s}\eta}$, we have
	\begin{equation*}
		S_{\delta/2}\left(m\vee v\right) \le (k_1 + 2k_2) \sqrt{(m\vee v \vee Z)Z}.
	\end{equation*}
	Note that $\log\log\left(\frac{\eta (m \vee v)}{m}\right) \ge \log \log \eta$ and therefore $Z\ge \log\frac{2\zeta\left(s\right)}{\delta} \ge \log (2\zeta(s))$. Therefore, if $m \le \log (2\zeta(s))$, we have the slightly simplified bound
	\begin{equation*}
		S_{\delta/2}\left(m\vee v\right) \le (k_1 + 2k_2)\sqrt{(v+Z)Z}.
	\end{equation*}
	Moreover, for $m\ge \eta$ we may upper bound $Z$ by
	\begin{equation*}
		Z \le s \log\left(\frac{\left[2\zeta\left(s\right)\right]^{1/s}}{\delta\log\eta}\log\left(m+v\right)\right)
	\end{equation*}
	Taking $\eta=m=1.8$ and $s=1.05$, one easily confirms that $m\le 3 \le \log (2\zeta(s))$ and, substituting back $k_1$ and $k_2$ as defined above, we have
	\begin{equation*}
		S_{\delta/2}\left(m\vee v\right) \le \sqrt{ 16 \log \prn*{
		\frac{60 \log(m+v)}{\delta}}	v + 16	\log^2 \prn*{
			\frac{60 \log(m+v)}{\delta}	}}
	\end{equation*}

	Finally, noting that $\prn[\big]{X_s -\hat{X}_s}^2 \le 4$ for all $s$, we may substitute $m+v\le 6t$ in the bound above, concluding the proof.
\end{proof}

\subsection{Proof of \Cref{prop:concentration-single-eta}}\label{app:stochastic:signle-eta-proof}

\begin{proof}
	Since $\eta$ is fixed throughout this proof, we drop the explicit dependence on it to simplify notation. Furthermore, we define the normalized/shorthand quantities:
	\begin{equation*}
		\gdiff' \defeq \gdiff / L~,~
		\dbar[t]' \defeq \max\{\dbar[t],\eta\sqrt{\beta}\}~,~
		\Gcoef' \defeq \Gcoef / 64~,~
		\G[t]' \defeq \G[t] / L^2~~\mbox{and}~
		\Ccoef' \defeq \Ccoef/(64L^2).
	\end{equation*}
	With these definitions, the failure probability we wish to bound is
	\begin{equation*}
		\Pr*(\ev^c) = \Pr*(\exists t\le T: \sum_{i < t} \inner{\gdiff'}{x_i - \xopt} < -2\dbar[t]'\sqrt{\Gcoef' \G[t]' + \Ccoef'}).
	\end{equation*}
	Now, define for any $k\ge 0$:
	\begin{equation*}
		s_k \defeq 2^k \dbar[0]' = 2^k \max\{\d[0], \eta \sqrt{\Ccoef} \}
	\end{equation*}
	and in addition define
	\begin{equation*}
		k_t \defeq \ceil*{ \log \frac{\dbar[t]'}{\dbar[0]'} }.
	\end{equation*}
	Note that $k_t$ satisfies the following
	\begin{equation*}
		\dbar[t]\le \dbar[t]' \le s_{k_t} \le 2\dbar[t]'
		~~\mbox{and}~~
		0 \le k_t \le \ceil*{\log\prn*{\frac{t}{4}+1}} \le \log(6t)-1.
	\end{equation*}
	The first set of inequalities follows from the definition of $\dbar[t]'$, $k_t$ and $s_k$, while the latter inequality is due to the fact that $\dbar[t] \le \d[0] + t \eta L\le \d[0] + \frac{t}{4} \eta\sqrt{\beta}$, which follows from the definition of SGD, the triangle inequality, the  assumption $\norm{g_i} \le L$ w.p.~1, and $\beta \ge 16 L^2$.
	
	Writing $\proj(x) = x / \max\{1,\norm{x}\}$ for the projection to the Euclidean unit ball, we now bound the failure probability as follows
	\begin{flalign*}
		\Pr*(\ev^c) & \overeq{(i)} \Pr*(\exists t\le T: \sum_{i < t} \inner{\gdiff'}{
			\proj\prn*{\frac{x_i - \xopt}{s_{k_t}}}} < -\frac{2\dbar[t]'}{s_{k_t}}\sqrt{\Gcoef' \G[t]' + \Ccoef'})
		\\ &
		\overle{(ii)}
		\Pr*(\exists t\le T: \sum_{i < t} \inner{\gdiff'}{
			\proj\prn*{\frac{x_i - \xopt}{s_{k_t}}}} < -\sqrt{\Gcoef' \G[t]' + \Ccoef'})
		\\ &
		\overle{(iii)}
		\sum_{k=0}^{\floor{\log(6T)-1}} 
		\Pr*(\exists t\le T: \sum_{i < t} \inner{\gdiff'}{
			\proj\prn*{\frac{x_i - \xopt}{s_{k}}}} < -\sqrt{\Gcoef' \G[t]' + \Ccoef'}), \label{eq:ollies-awesome-chaining}\numberthis
	\end{flalign*}
	where $(i)$ follows from $\norm{x_i - \xopt} = \d[i] \le \dbar[t] \le s_{k_t}$ (which means that the projection does nothing), $(ii)$ follows from $s_{k_t} \le 2\dbar[t]'$, and $(iii)$ follows from  $0 \le k_t \le \log(6t) \le \log(6 T)$ and a union bound.
	We can now define a nicely behaved stochastic process: for every $i$ and $k$ let
	\begin{equation*}
		\themg \defeq  \inner{\frac{g_i}{L}}{
			\proj\prn*{\frac{x_i - \xopt}{s_{k}}}}
	\end{equation*}
	and note that $\themg$ is adapted to the filtration $\filt_t = \sigma(g_0, g_1, \ldots, g_t)$ (i.e., $\themg \in \filt_{t}$) and satisfies $\abs*{\themg} \le \frac{\norm{g_i}}{L}\le 1$ by Cauchy-Schwarz and $\norm{g_i}\le L$.
	Applying~\Cref{cor:kickass-mg-concentration} with $\hat{X} = 0$ as the predictable sequence, we obtain, for any $k$ and $\delta' \in (0,1)$,
	\begin{equation}
		\Pr*( \exists t \le T: 
		\abs*{\sum_{i < t} \prn*{\themg - \Ex*{ \themg | \filt_{i-1} }}}
		\ge 4 \sqrt{A_{t}(\delta') \sum_{i< t} (\themg)^2 + A_t^2(\delta')}
		) \le \delta',\label{eq:kickass-concentration-conclusion}
	\end{equation}
	where $A_t(\delta') = \log \prn*{
		\frac{60 \log(6t)}{\delta'}}$. Note that 
	\begin{equation*}
		\themg 
	- \Ex*{ \themg | \filt_{i-1} } = \inner{\gdiff'}{
		\proj\prn*{\frac{x_i - \xopt}{s_{k}}}}
	\end{equation*}
	 and that 
	 \begin{equation*}
	 	\sum_{i< t} (\themg)^2 \le \sum_{i< t} \norm{g_i}^2 / L^2 = \G[t]'.
	 \end{equation*} 
 Furthermore, note that, for $\delta' = \frac{\delta}{\log(6 T)}$ we have that $A_t(\delta') \le A_T(\delta') = C = \Gcoef / 32^2 \le \Gcoef' / 16$ and that $A_t^2(\delta') \le A_t^2(\delta') = C^2 \le \Ccoef' / 16$. Substituting to inequality~\eqref{eq:kickass-concentration-conclusion} we conclude that
	\begin{equation*}
		\Pr*(\exists t\le T: \sum_{i < t} \inner{\gdiff'}{
			\proj\prn*{\frac{x_i - \xopt}{s_{k}}}} < -\sqrt{\Gcoef' \G[t]' + \Ccoef'}) \le \frac{\delta}{\log(6 T)},
	\end{equation*}
	for all $k$, and therefore $\Pr*(\ev^c) \le \delta$ by the bound~\eqref{eq:ollies-awesome-chaining}.
\end{proof}

\subsection{Proof of \Cref{lem:concentration-master}}\label{app:stochastic-concentration-master-proof}

\begin{proof}
	Fixing some $k\in\{2,4,8,\ldots\}$ and noting that $C_k =\log \prn*{\frac{60 \log^2(6B)}{2^{-2k}\delta}}$, we may apply~\Cref{prop:concentration-single-eta} with $T=B$, $\Gcoef=\GcoefK$, $\Ccoef=\CcoefK$ and failure probability $2^{-2k}\delta$, giving $1-\Pr*(\ev[B, \GcoefK, \CcoefK]( \eta )) \le 2^{-2k} \delta$ for any $\eta$. Therefore, by the union bound
	\begin{equation*}
		1 - \Pr*(  \bigcap_{j=0, 1, \ldots, 2^k} \ev[B, \GcoefK, \CcoefK]( 2^j \etainit ) ) \le (2^k + 1)2^{-2k} \delta \le 2^{-(k-1)} \delta.
	\end{equation*}
	Applying the union bound once more, we have
	\begin{equation*}
		1 - \Pr*(  \bigcap_{k=2, 4, 8, \ldots} \bigcap_{j=0, 1, \ldots, 2^k} \ev[B, \GcoefK, \CcoefK]( 2^j \etainit ) ) \le \sum_{k=2,4,8,\ldots} 2^{-(k-1)}\delta \le \sum_{k\ge 1} 2^{-k}\delta = \delta.
	\end{equation*}
\end{proof}

\subsection{Proof of \Cref{thm:stochastic-main}}\label{app:stochastic:main-proof}

\begin{proof}
	The bound $B$ on the algorithm's query number is deterministic and therefore follows exactly as in the proof of~\Cref{thm:deterministic-main}. 
	For the remainder of the analysis we assume the event \linebreak $\bigcap_{k=2, 4, 8, \ldots} \bigcap_{j=0, 1, \ldots, 2^k} \ev[B, \GcoefK, \CcoefK]( 2^j \etainit )$ holds, which by \Cref{lem:concentration-master} happens with probability at least $1-\delta$; we will show that, conditional on this event holding, the conclusions of the theorem hold deterministically. (Note that $\ev[B, \GcoefK, \CcoefK]$ implies $\ev[T_k, \GcoefK, \CcoefK]$ for all $T_k \le B$).
	
	Next, we establish the lower bound \eqref{eq:stochastic-T-lb} on $T$. Note that \Cref{alg:main} indeed always returns a point of the form $\bar{x} = \frac{1}{T}\sum_{i< T}x_i(\eta)$ with some $T\ge 1$; the edge case of returning in \cref{line:main-alg-edge-case-termination} corresponds to $T=1$. Moreover, in the typical case of returning in \cref{line:main-alg-normal-termination}, we have $T=T_{\kout}=\floor*{\frac{B}{2\kout}} \ge \frac{B}{4\kout}$ for some $\kout\le B/4$. By \Cref{lem:stochastic-eta-max}, we have 
	\begin{equation}\label{eq:stochastic-kout-bound}
		\kout \le 2 \log \log_+ \frac{\etamax(\GcoefK[0],\CcoefK[0])}{\etainit} \overle{(\star)} 2 \log \log_+ \prn*{\frac{4\cdot 32}{32^2 -2}\cdot \frac{\d[0]}{\etainit L}} \le 2\log \log_+ \frac{\d[0]}{\etainit  L},
	\end{equation}
	where $(\star)$ follows the facts that $\etamax(\Gcoef,\Ccoef)$ is decreasing in $\Gcoef, \Ccoef$, and $\GcoefK[0] \ge 32^2$ and $\CcoefK[0]\ge 32^2 L^2$ by the setting~\eqref{eq:coef-k-setting}. Thus we obtain 
	the claimed lower bound~\eqref{eq:stochastic-T-lb} on $T=T_{\kout}$. 
	
	It remains to show that one of the  conclusions~\eqref{eq:stochastic-main-bound-normal-case} and~\eqref{eq:stochastic-main-bound-edge-case} must hold. When \Cref{alg:main} returns in \cref{line:main-alg-normal-termination}, we apply \Cref{prop:stochastic:subopt-bound} on the final bisection performed by the algorithm, with parameters $T=T_{\kout}$, $\Gcoef=\GcoefK[\kout]$ and $\Ccoef=\CcoefK[\kout]$. In the typical case that $\etainit \le \bt(\etainit)$, the proposition gives
	\begin{equation*}
		\norm{\bar{x}-\xopt} \le \d[0] + \norm{\bar{x}-x_0} \le \frac{5\Gcoef-2}{\Gcoef-2} \d[0] \le \frac{5\cdot 32^2-2}{32^2-2} \d[0] \le 6\d[0]
	\end{equation*}
	(where we have used $\Gcoef \ge \GcoefK[0] \ge 32^2$), and, for some $\eta'\in[\eta,2\eta]$,
	\begin{equation*}
		f(\bar{x}) - f(\xopt) \le \frac{9\alpha - 2}{2(\alpha - 2)}  \cdot \frac{ \d[0] \sqrt{ \Gcoef \G(\eta') + \Ccoef}}{T} 
		= O\prn*{\frac{ \d[0] \sqrt{ \Gcoef \G(\eta') + \Ccoef}}{T} }.
	\end{equation*}
	Letting $C=\log \frac{\log_+ B}{\delta} +  \log\log_+  \frac{\norm{\xopt-x_0}}{\etainit L}$, we note that  $\Gcoef = O(\kout + \log\frac{\log_+ B}{\delta}) = O(C)$ and similarly $\Ccoef = O(C^2 L^2)$ due to the setting~\eqref{eq:coef-k-setting} and upper bound~\eqref{eq:stochastic-kout-bound} on $\kout$. Therefore, $f(\bar{x}) - f(\xopt) \le O\prn[\Big]{ \frac{ \d[0] \sqrt{C\G(\eta')+C^2L^2}}{T}}$ and conclusion~\eqref{eq:stochastic-main-bound-normal-case} holds. 
	
	In the edge case that $\etainit > \bt(\etainit)$ in the final bisection, we separately consider the cases $\etainit \sqrt{\Gcoef \G(\etainit) + \Ccoef} \le 5\d[0]$ and $\etainit \sqrt{\Gcoef \G(\etainit) + \Ccoef} > 5\d[0]$. In the former case, \Cref{prop:stochastic:subopt-bound} gives
	\begin{equation*}
		\norm{\bar{x}-\xopt} \le \d[0] + \norm{\bar{x}-x_0} \le
		\d[0]+\etainit \sqrt{\Gcoef \G(\etainit) + \Ccoef}\le 6\d[0]
	\end{equation*}
	and
	\begin{equation*}
		f(\bar{x}) - f(\xopt) = O\prn*{\frac{ (\d[0] +\etainit \sqrt{\Gcoef \G(\etainit) + \Ccoef})  \sqrt{\Gcoef \G(\etainit) + \Ccoef}}{T}}
		= O\prn*{\frac{ \d[0] \sqrt{\Gcoef \G(\etainit) + \Ccoef}}{T}},
	\end{equation*}
	so conclusion~\eqref{eq:stochastic-main-bound-normal-case} holds as before. In the second case, where  $5\d[0] < \etainit \sqrt{\Gcoef \G(\etainit) + \Ccoef}$, we have  
	\begin{equation*}
		\norm{\bar{x}-\xopt} \le \d[0] + \norm{\bar{x}-x_0} \le
		\d[0]+\etainit \sqrt{\Gcoef \G(\etainit) + \Ccoef} = O\prn*{\etainit \sqrt{\Gcoef \G(\etainit) + \Ccoef}}
	\end{equation*}
	and
	\begin{equation*}
		f(\bar{x}) - f(\xopt) = O\prn*{\frac{ (\d[0] +\etainit \sqrt{\Gcoef \G(\etainit) + \Ccoef})  \sqrt{\Gcoef \G(\etainit) + \Ccoef}}{T}}
		= O\prn*{\frac{ \etainit \prn{\Gcoef \G(\etainit) + \Ccoef}}{T}}.
	\end{equation*}
	Recalling that $\Gcoef=O(C)$ and $\Ccoef=O(C^2L^2)$, we see that conclusion~\eqref{eq:stochastic-main-bound-edge-case} holds.
	
	Finally, if \Cref{alg:main} returns in \cref{line:main-alg-edge-case-termination} instead of \cref{line:main-alg-normal-termination}, we have $\bar{x}=x_0$ and $T=1$, and so conclusion~\eqref{eq:stochastic-main-bound-normal-case} holds trivially, since  $\norm{x_0-\xopt}\le 6\norm{x_0-\xopt}$ and  $f(x_0)-f(\xopt) \le \inner{\grad f(x_0)}{x_0-\xopt} \le \norm{x_0-\xopt}L = O(\d[0]\sqrt{C^2 L^2})$ due to convexity of $f$ and \Cref{ass:lipschitz}.
\end{proof}

\subsection{A corollary for uniform gradient bounds}\label{app:stochastic:main-coro-proof}

The following corollary translates~\Cref{thm:stochastic-main} to the setting where we replace all observed gradient norms by $L$. In it $\lambda$ represents a double-logarithmic factor and we use $\iota < 1$ to indicate low order terms which can be ignored as soon as $B=\Omega(\lambda^2)$.

\begin{corollary}\label{coro:main-stochastic-bound}
	Under \Cref{ass:lipschitz}, for any $\delta\in (0,1)$ and $\veps>0$,
	\Cref{alg:main} with parameters $\GcoefK, \CcoefK$ given by
	\Cref{eq:coef-k-setting}, $B\in \N$ and $\etainit = \frac{\veps}{L^2}$ 
	makes at most $B$ gradient queries and returns $\bar{x}$ such that, with probability at least $1-\delta$,
	\begin{equation*}
		\norm{\bar{x}-\xopt} \le O\prn*{\max\crl*{\norm{x_0-\xopt}, \frac{\sqrt{B(\lambda + \lambda^2 \iota^2)}}{L} \veps}}
	\end{equation*}
	and
	\begin{equation*}
		f(\bar{x}) - f(\xopt) \le O\prn*{
			\frac{\norm{x_0-\xopt} L (\lambda + \lambda^{3/2} \iota)}{\sqrt{B}} 
			+ (\lambda + \lambda^2 \iota^2) \veps 
		}.
	\end{equation*}
	where
	\begin{equation*}
		\lambda \defeq	\log \frac{1}{\delta}\log_+ \frac{B \norm{x_0-\xopt}L }{\veps}~~\mbox{and}~~\iota = \sqrt{\frac{\lambda}{B+\lambda}}.
	\end{equation*}
\end{corollary}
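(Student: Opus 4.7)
The plan is to invoke \Cref{thm:stochastic-main} with $\etainit = \veps/L^2$ and then translate its bounds into the form claimed in the corollary via algebraic simplification. Three basic observations drive the reduction: first, \Cref{ass:lipschitz} gives $\G[t](\eta) \le L^2 t \le L^2 B$ for every $\eta$ and every $t\le B$, so all gradient-sum terms can be replaced by expressions in $L$ and $B$. Second, plugging $\etainit = \veps/L^2$ into the theorem's constant yields $C = \log(1/\delta) + \log\log_+\!\frac{B\norm{x_0-\xopt}L}{\veps}$, which matches the corollary's $\lambda$ up to absolute constants. Third, the theorem's lower bound on $T$ becomes $T \ge \Omega(B / \log\log_+\!\frac{\norm{x_0-\xopt}L}{\veps}) \ge \Omega(B/\lambda)$, so $1/T = O(\lambda/B)$ whenever the normal case of the theorem fires.

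Starting from the normal case \eqref{eq:stochastic-main-bound-normal-case}, the iterate bound $\norm{\bar{x}-\xopt} \le 6\d[0]$ transfers directly. Substituting $\G(\eta') \le L^2 B$ and $C=O(\lambda)$ into the suboptimality estimate gives $f(\bar{x})-f(\xopt) = O\prn*{\d[0] L(\sqrt{\lambda B}+\lambda)/T}$, and combining with the $T$-lower bound and a small case split on whether $B \ge \lambda$ converts this into the $\d[0] L(\lambda + \lambda^{3/2}\iota)/\sqrt{B}$ form using $\iota^2 = \lambda/(B+\lambda)$. In the edge case \eqref{eq:stochastic-main-bound-edge-case}, substituting $\etainit=\veps/L^2$ and $\G(\etainit)\le L^2 B$ gives $\norm{\bar{x}-\xopt} = O\prn*{(\veps/L)\sqrt{\lambda B + \lambda^2}}$ and $f(\bar{x})-f(\xopt) = O\prn*{\veps(\lambda B + \lambda^2)/(L^2 T) \cdot L^2}$; checking that $\sqrt{\lambda B + \lambda^2}$ and $\sqrt{B(\lambda + \lambda^2\iota^2)}$ agree up to absolute constants, and that $\lambda B + \lambda^2 = O(B(\lambda + \lambda^2\iota^2))$, then recovers the two bounds claimed in the corollary.

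Finally, the iterate bound stated in the corollary takes the maximum of the normal-case bound ($6\d[0]$) and the edge-case bound ($O((\veps/L)\sqrt{B(\lambda+\lambda^2\iota^2)})$), while the suboptimality bound is the sum of the normal-case contribution ($\d[0] L(\lambda + \lambda^{3/2}\iota)/\sqrt{B}$) and the edge-case contribution ($(\lambda + \lambda^2 \iota^2)\veps$); since exactly one of the two cases occurs, taking max/sum over both is a valid relaxation and gives a single bound that covers whichever case actually holds. The query-count and probability guarantees transfer verbatim from \Cref{thm:stochastic-main}.

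The only real obstacle is bookkeeping: the corollary packages everything in terms of the single auxiliary quantity $\iota$, so translating $\sqrt{\lambda B}+\lambda$ and $\log\log_+ \frac{\d[0] L}{\veps}$ into clean multiples of $\lambda, \iota$ requires a short case analysis on whether $B \ge \lambda$ (where $\iota \asymp \sqrt{\lambda/B}$ and the low-order $\iota$-dependent terms become negligible) or $B < \lambda$ (where $\iota \asymp 1$ and both terms are comparable). No new probabilistic argument is needed---the corollary is purely a user-friendly reformulation of \Cref{thm:stochastic-main} specialized to the Lipschitz setting.
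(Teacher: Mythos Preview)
Your overall strategy matches the paper's exactly: invoke \Cref{thm:stochastic-main} with $\etainit=\veps/L^2$, identify $C$ with $\lambda$, and simplify each case. However, there is a concrete execution error in the suboptimality bounds. You substitute $\G(\eta') \le L^2 B$, but this is too loose; you need the sharper $\G(\eta') \le L^2 T$ (which you correctly state at the outset but then abandon). With the loose bound, the normal-case suboptimality becomes
\[
\frac{\d[0]L(\sqrt{\lambda B}+\lambda)}{T} = O\prn*{\frac{\d[0]L\,\lambda^{3/2}}{\sqrt{B}}}
\quad\text{when }B\ge\lambda\text{ and }T=\Theta(B/\lambda),
\]
a factor $\sqrt{\lambda}$ larger than the corollary's leading term $\d[0]L\lambda/\sqrt{B}$. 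No case split on $B\gtrless\lambda$ closes this gap. The same slip makes your edge-case suboptimality $O(\veps\lambda^2)$ rather than the claimed $O(\veps(\lambda+\lambda^2\iota^2))$, off by a factor $\lambda$ in the regime $B\ge\lambda$.

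The paper instead uses $\G \le L^2 T$, so the normal-case term is $\d[0]L\sqrt{\lambda T+\lambda^2}/T$. Then the single observation $1/T = O(\lambda/(B+\lambda)) = O(\iota^2)$ (from $T=\Omega(B/\lambda+1)$) gives $\sqrt{\lambda/T}=O(\lambda/\sqrt{B+\lambda})=O(\lambda/\sqrt{B})$ and $\lambda/T = O(\lambda^2/(B+\lambda)) = O(\lambda^{3/2}\iota/\sqrt{B})$, recovering the claimed form directly without a case split. The edge-case suboptimality similarly becomes $\veps(\lambda+\lambda^2/T) = O(\veps(\lambda+\lambda^2\iota^2))$. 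Your distance bounds and the max/sum packaging are fine.
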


\begin{proof}
	The corollary follows by substitution of $\etainit = \frac{\veps}{L^2}$ 
	into \Cref{thm:stochastic-main}. In particular, the bound \eqref{eq:stochastic-T-lb} becomes
	\begin{equation*}
		T = \Omega\prn*{\frac{B}{\lambda}+1},
	\end{equation*}
	the quantity $C$ in \Cref{thm:stochastic-main} satisfies
	\begin{equation*}
		C = \log \frac{1}{\delta} +  
		\log\log_+  \frac{B\d[0]}{\etainit L} 
		= \lambda,
	\end{equation*}
	and the upper bound on $\norm{\bar{x}-\xopt}$ in~\eqref{eq:stochastic-main-bound-edge-case} is 
	\begin{equation*}
		O\prn*{ \etainit \sqrt{C\G(\etainit)+C^2L^2}}
		=
		O\prn*{ \frac{\veps}{L} \sqrt{\lambda T + \lambda^2 } } = O\prn*{ \frac{\sqrt{T}\sqrt{\lambda + \lambda^2/(1+B/\lambda)}}{L} \veps},
	\end{equation*}
	which, when substituting $T\le B$ and $\iota^2 = \lambda(\lambda+B)$ and  combining with the bound on $\norm{\bar{x}-\xopt}$ in~\eqref{eq:stochastic-main-bound-normal-case} yields the claimed distance bound.
	
	Finally, recalling that $T\ge1$ and $T=\Omega(B/m)$, the suboptimality bound in~\eqref{eq:stochastic-main-bound-normal-case} reads
	\begin{equation*}
		f(\bar{x}) - f(\xopt) \le O\prn*{ \frac{ \d[0]\sqrt{C\G(\eta')+C^2L^2}}{T}} = 
		O\prn*{ \frac{\d[0]L}{\sqrt{B}}\sqrt{\lambda^2 + \frac{\lambda^3}{T}}} 
		= O\prn*{ \frac{\d[0]L (\lambda + \lambda^{3/2}\iota)}{\sqrt{B}} }
	\end{equation*}
	and the suboptimality bound in~\eqref{eq:stochastic-main-bound-edge-case} reads
	\begin{equation*}
		f(\bar{x}) - f(\xopt) \le O\prn*{\frac{\etainit \prn*{C\G(\etainit)+C^2L^2}}{T}}
		=
		O\prn*{ \veps \prn*{\lambda + \frac{\lambda^2}{T}} } = O((\lambda + \lambda^2\iota^2)\veps);
	\end{equation*}
	combined, these yield the claimed suboptimality bound.
\end{proof}

\section{Proofs for \Cref{sec:adaptivity}}

\subsection{Proof of \Cref{thm:adaptivity-to-smoothness}}\label{app:proof-of:thm:adaptivity-to-smoothness}

\begin{proof}
	Recall the following basic property of any $S$-smooth functions \cite[Lemma 3.4]{bubeck2014convex},
	\begin{flalign}\label{eq:basic-smoothness-fact}
		f(u) - f(v) - \inner{\grad f(v)}{u - v} \le \frac{S}{2} \| u - v \|^2
		~~\mbox{for all $u,v\in\R^d$. }
	\end{flalign}
	Using this fact we establish two useful inequalities for our proof. First, for any $\eta\le \frac{1}{S}$ substituting $u=x_{i+1}(\eta)$ and $v=x_i(\eta)$ into~\eqref{eq:basic-smoothness-fact} gives $\frac{\eta}{2} \| g_i(\eta) \|^2 \le f(x_i(\eta)) - f(x_{i+1}(\eta))$. Summing over $i<T$ we obtain
	\begin{flalign}\label{eq:eta-small-G-T-bound} 
		\eta \le \frac{1}{S}
		 \implies \frac{\eta}{2} \G[T](\eta) 
		 \le f(x_0) - f(x_T(\eta))
	 	\le f(x_0) - f(x_\star) \overle{(\star)} \frac{1}{2}S\d[0]^2,
	\end{flalign}
	where $(\star)$ follows from~\eqref{eq:basic-smoothness-fact} with $u=x_0$ and $v=\xopt$. 
	Second, for any $\eta\ge 0$, substituting $x_i(\eta) - \frac{1}{S} g_i(\eta)$ and $v = x_t(\eta)$ into~\eqref{eq:basic-smoothness-fact} yields
	$\norm{g_i(\eta)}^2 \le 2S \brk*{ f(x_i(\eta)) - f\prn*{ x_i(\eta) - \tfrac{1}{S} g_i(\eta)}} \le 2S \brk*{ f(x_i(\eta)) - f\prn*{\xopt}}$. Summing for $i<T$ yields
	\begin{flalign} \label{eq:G-T-bound-by-sum-of-gaps}
		G_T(\eta) \le 2 S \sum_{i < T}\brk*{ f(x_i(\eta)) - f(x_\star)}
		~~\mbox{for all}~~\eta\ge 0.
	\end{flalign}
	
	We now split the proof into three cases based on the value of $\etaout$. In each case, we show that $f(\bar{x}) - f(\xopt)  =  O\left( \frac{S \d[0]^2}{T} \right)$
	which, when combined with the lower bound~\eqref{eq:determinisitic-T-lb} on $T$, gives
	the result.
	
	First, consider the case that $\etaout \ge \frac{1}{2S}$ (and hence also $\etaout \ne \etainit$, so that final the call to $\rootfinding$ returns at \cref{line:bisection-normal-termination}). Then, by \Cref{lem:gm-error-bound-r},
	\begin{flalign*} 
		\frac{1}{T} \sum_{i<T} f(x_i(\etaout)) - f(\xopt) 
		& \le \frac{\rbar(\etaout)\d[0]}{\etaout T} + \frac{\etaout \G(\etaout)}{ 2T}
		\\ &\overle{(i)} \frac{2S \rbar[T](\etalofinal) \d[0]}{T} + \frac{\rbar(\etalofinal) \sqrt{\G[T](\etaout)}}{2 T \sqrt{\Gcoef}} 
		\overeq{(ii)} 
		O\left( \d[0] \frac{S \d[0] + \sqrt{\G[T](\etaout)}}{T} \right) \\
		&\overeq{(iii)} O\left(  \d[0] \frac{ S \d[0] + \sqrt{S \sum_{i<T} \brk*{f(x_i(\etaout)) - f(x_\star)} } }{T} \right).
	\end{flalign*} 
	Above, transition $(i)$ follows from substituting $\etaout \ge \frac{1}{2 S}$, $\etaout \le \frac{\rbar(\etalofinal)}{\sqrt{\Gcoef \G(\etaout) }}$ and $\rbar(\etaout) \le \rbar(\etalofinal)$ (the latter two bounds due to \Cref{lem:output-properties}). 
	Transition $(ii)$ is from \Cref{lem:distance-bound}, where we recall that $\etalofinal \le \bt(\etalofinal)$ by the bisection invariant. Transition $(iii)$ follows from substituting the bound~\eqref{eq:G-T-bound-by-sum-of-gaps}.
	If 
	$\sqrt{ \sum_{i=1}^T f(x_i(\eta)) - f(x_\star) } \le \sqrt{S} \d[0]$ then the desired
	bound holds by squaring this inequality and recalling that $f(\bar{x})\le \frac{1}{T}\sum_{i<T} f(x_i(\etaout))$ by convexity. Conversely, if $S \d[0] \le \sqrt{S \sum_{i=1}^T f(x_i(\eta)) - f(x_\star) }$ then, due to the previous display,
	$$
	\frac{1}{T} \sum_{i=1}^T f(x_i(\eta)) - f(x_\star) = O\left(  \d[0] \frac{\sqrt{S \sum_{i=1}^T f(x_i(\eta)) - f(x_\star) } }{T} \right)
	$$
	and rearranging gives $\sqrt{\sum_{i=1}^T f(x_i(\eta)) - f(x_\star) } = O(\sqrt{S} \d[0])$ again. 
	
	Second, consider the case that $\etainit < \etaout \le \frac{1}{2 S}$ (so that the final call to $\rootfinding$ still returns at \cref{line:bisection-normal-termination}). Let $\etalofinal$ and $\etahifinal$ denote the final values of $\etalo$ and $\etahi$, respectively, so that $\etahifinal \le 2\etaout = \frac{1}{S}$. Let us bound the suboptimality of the $\etahifinal$ iterates: beginning with \Cref{lem:gm-error-bound-r}, we have
	\begin{flalign*}
		\frac{1}{T}\sum_{i< T} f(x_i(\etahifinal)) - f(\xopt) & \le 
		\frac{\rbar(\etahifinal)\d[0]}{\etahifinal T} + \frac{\etahifinal \G(\etahifinal)}{ 2T}
		\overeq{(i)}
		O\prn*{ \frac{\d[0]\sqrt{\G(\etahifinal)} + S\d[0]^2 }{T}
		}
		\\ &
		\overeq{(ii)}
		O\left(  \d[0] \frac{ S \d[0] + \sqrt{S \sum_{i<T} \brk*{f(x_i(\etahifinal)) - f(x_\star)} } }{T} \right)
	\end{flalign*}
	where $(i)$ follows from the bisection invariant $\etahifinal \ge \bt(\etahifinal) = \frac{\rbar(\etahifinal)}{\sqrt{\Gcoef \G(\etahifinal) }}$ and the bound~\eqref{eq:eta-small-G-T-bound}, while $(ii)$ follows from the bound~\eqref{eq:G-T-bound-by-sum-of-gaps}. Therefore, by that same considerations as in the $\etaout \ge \frac{1}{2S}$ case, we conclude that 
	\begin{equation}\label{eq:smooth-etahi-small-bound}
		\sum_{i< T} \brk*{f(x_i(\etahifinal)) - f(\xopt)} = O( S\d[0]^2 ).
	\end{equation}
	If $\etaout = \etahifinal$ then we are done since $f(\bar{x}) \le 	\frac{1}{T}\sum_{i< T} f(x_i(\etahifinal))$ by convexity. Otherwise, applying \Cref{lem:gm-error-bound-r} a final time gives
	\begin{flalign*}
		\sum_{i< T} f(\bar{x}) - f(\xopt)  & \le 
		\frac{\rbar(\etaout)\d[0]}{\etaout T} + \frac{\etaout \G(\etaout)}{2T}
		\overeq{(i)}
		O\prn*{ 
			\frac{\d[0]\sqrt{\G(\etahifinal)} + S\d[0]^2 }{T}
		} 
		\\ &
		\overeq{(ii)}
		O\left(  \d[0] \frac{ S \d[0] + \sqrt{S \sum_{i<T} \brk*{f(x_i(\etahifinal)) - f(x_\star)} } }{T} \right)
		\overeq{(iii)}
		O\prn*{\frac{S\d[0]^2}{T}},
	\end{flalign*}
	where $(i)$ follows form $\etaout \ge \frac{\rbar(\etaout)}{\sqrt{\Gcoef \G(\etahifinal)}}$ (via \Cref{lem:output-properties}) and the bound~\eqref{eq:eta-small-G-T-bound}, $(ii)$ is due to~\eqref{eq:G-T-bound-by-sum-of-gaps}, and for $(iii)$ we substituted our bound~\eqref{eq:smooth-etahi-small-bound}	on the error of the $\etahifinal$ iterates.
	
	Finally, the case where $\etaout =  \etainit < \frac{1}{2S}$ follows identically to the bound \eqref{eq:smooth-etahi-small-bound} since we have $\etaout \ge \bt(\etaout)$ and $\etaout\le \frac{1}{S}$ in that case as well. 
\end{proof}

\subsection{Proof of \Cref{thm:strong-convexity-result}}\label{app:proof-of:thm:strong-convexity-result}

\newcommand{\tileps}{\widetilde{\varepsilon}}
\newcommand{\tilL}{\widetilde{L}}

\begin{proof}
	First, note that by \Cref{thm:stochastic-main}, computing $x\pind{M}$ requires $\sum_{m=1}^M B\pind{m} = 2+4+\cdots +2^M = 2^{M+1} - 2 \le B$ gradients queries as claimed.  Next, note that
	\begin{equation*}
		\sum_{m=1}^M \delta\pind{m} = \delta \sum_{m=1}^M \frac{1}{m(m+1)} = \delta \prn*{1-\frac{1}{M+1}} \le \delta,
	\end{equation*}
	and therefore by the union bound, with probability at least $1-\delta$ the conclusions of \Cref{thm:stochastic-main} hold for all applications of $\pftuner$; we proceed with our analysis conditional on that event.
	
	Note that $\delta\pind{m}  \ge \delta\pind{M} = \Omega(\delta / \log^2 B)$ and $\etainit\pind{m} \ge \etainit\pind{M} = \Omega( \varepsilon / (L^2 B))$. Consequently, we have
	\begin{equation*}
		\lambda \defeq \log \prn*{
			\frac{1}{\delta\pind{M}} \log_+ \frac{B\d[0] }{\etainit\pind{M} L}} = O\prn*{ \log \prn*{\frac{1}{\delta} \log_+ \frac{B\d[0] }{\varepsilon}} }.
	\end{equation*}
	With this, we apply \Cref{thm:stochastic-main} on to bound the suboptimality of $x\pind{m}$ for $m\le M$. Let $T\pind{m}$ be the corresponding $T$ from \Cref{thm:stochastic-main}, and note that $T\pind{m} \ge \max\{1,B\pind{m}/\lambda\}$. Noting also that $\G[T\pind{m}](\eta') / T\pind{m} \le L^2$ for all $\eta'$, we have
	\begin{flalign*}
		f(x\pind{m}) - f(\xopt) &= O\prn*{ \max\crl*{ \frac{\varepsilon(\lambda + \lambda^2/T\pind{m})}{B\pind{m}} , 
		\frac{\norm{x\pind{m-1}-\xopt}L \sqrt{\lambda^2 + \lambda^3 / T\pind{m}}}{\sqrt{B\pind{m}}}}}
		\\ &\le  %
			\max\crl*{ \frac{\tileps}{B\pind{m}} , 
				\frac{\tilL\norm{x\pind{m-1}-\xopt}}{\sqrt{B\pind{m}}}},
	\end{flalign*}
	for some $\tileps = O(\varepsilon \lambda^2)$ and $\tilL = O(L \lambda^{3/2})$, and all $m\le M$. Applying strong convexity, we have that  $\frac{\mu}{2} \| x^{(m-1)} - x_\star \|^2 \le f(x^{(m-1)}) - f(x_\star)$ which implies
	\begin{flalign*}
		f(x\pind{m}) - f(\xopt) & \le \max\crl*{ \frac{\tileps}{B\pind{m}} , 
			\sqrt{\frac{2\tilL^2}{\mu B\pind{m}} ( f(x\pind{m-1}) - f(\xopt) ) }
		} \\&
		\overle{(\star)}
		 \max\crl*{ \frac{\tileps}{B\pind{m}} , \frac{4\tilL^2}{\mu B\pind{m}}, 
		 	\frac{f(x\pind{m-1}) - f(\xopt)}{2}
			 }
	\end{flalign*}
	where $(\star)$ follows from $\sqrt{ab} \le \max\{2a,b/2\}$. Iterating this bound and noting that $2B\pind{m-1} = B\pind{m}=2^m$  we conclude that
	\begin{equation}\label{eq:strong-convex-max-bound}
			f(x\pind{m}) - f(\xopt) \le  \max\crl*{ \frac{\tileps}{B\pind{m}} , \frac{4\tilL^2}{\mu B\pind{m}}, 
				\frac{f(x\pind{0}) - f(\xopt)}{B\pind{m}}
			}.
	\end{equation}
	Finally, the strong convexity and Lipschitz continuity assumptions imply that
	\begin{equation*}
		\frac{\mu}{2} \| x^{(0)} - x_\star \|^2 \le f(x^{(0)}) - f(x_\star) \le L \| x^{(0)} - x_\star \| 
	\end{equation*}
	and therefore $\| x^{(0)} - x_\star \| \le \frac{2L}{\mu}$ and, using Lipschitz continuity again, $f(x^{(0)}) - f(x_\star) \le \frac{2L^2}{\mu} \le \frac{4\tilL^2}{\mu}$. Substituting back into~\eqref{eq:strong-convex-max-bound}, the second and third terms merge.
	Recalling that $B\pind{M} = 2^M = B/2$, and that  $\tileps = O(\varepsilon \lambda^2)$ and $\tilL = O(L \lambda^{3/2})$, we have
	\begin{equation*}
			f(x\pind{M}) - f(\xopt) \le \max\crl*{ \frac{2\tileps}{B} , \frac{8\tilL^2}{\mu B} } = O\prn*{\frac{(L^2 / \mu + \varepsilon)\lambda^3}{B}}
	\end{equation*}
	as required.
\end{proof} 
\section{Additional discussion}\label{sec:discussion}

\subsection{Relaxing the assumption that $\xopt$ is optimal}\label{sec:relaxed-xopt}
Optimality gap bounds obtained via online-to-batch conversion have the appealing property of holding for any comparator $\comparator\in\xset$ and not necessarily a minimizer of $f$~\cite{hazan2016introduction}. Consequently, the parameter-free regret minimization algorithm of \citet{mcmahan2014unconstrained} outputs a point $\bar{x}$ with an error bound of the form
\begin{equation*}
	\E f(\bar{x}) \le %
	f(x') + \varepsilon + O\prn*{\frac{L\norm{x_0 - x'}}{\sqrt{T}}\sqrt{\log\prn*{1+\frac{L\norm{x_0 - x'}}{\varepsilon\sqrt{T}}}}}  
	~\mbox{for all }x'\in\xset.
\end{equation*}
In contrast, we only provide guarantees for $x'=\xopt$, a minimizer of $f$. This can be restrictive in settings where $\norm{x_0-\xopt}$ is very large or possibly infinite, i.e., when the minimum of $f$ is not attained, as is the case in logistic regression on separable data. 

However, the assumption that $\xopt$ is optimal can be relaxed. In particular, our only real requirement from $\xopt$ is that, for every SGD iterate $x_t(\eta)$ evaluated in \Cref{alg:main}, we have $f(x_t(\eta))-f(\xopt) \ge 0$. In the noiseless setting, we may modify \Cref{alg:main} to return the GD iterate with lowest objective value (from all the GD executions combined). Such modified algorithm would satisfy the error bounds in \Cref{thm:deterministic-main} with respect to  an \emph{arbitrary} point $\xopt$: if the algorithm's output has function value smaller than $f(\xopt)$, we are done; otherwise, we have $f(x_t(\eta))-f(\xopt) \ge 0$ for every GD iterate, and our analysis goes through. (Note, however, that we lose the guarantee on the distance between $\xopt$ and the algorithm's output). 
Extension to the stochastic case is more involved since we do not have the privilege of choosing the best SGD iterate; we leave it to future work.

\subsection{An alternative bisection target without gradient norm adaptivity}\label{sec:alternative-bisection}
\Cref{alg:main} is fairly adaptive to stochastic gradient norms, with performance guarantees that depend mainly on observed norms, featuring an a-priori gradient norm bound in low-order terms. Moreover, in the noiseless setting our method requires no a-priori bound on gradient norms and our bounds depend solely on observed norms. 

It is possible, however, to slightly simplify our method and sharpen some of our bounds by forgoing adaptivity to gradient norms. Specifically, if we only seek guarantees that depend on an a-priori gradient norm bound $L$, then it is possible to replace the bisection target $\phi$ defined in~\cref{line:bisection-target-def} of \Cref{alg:main} with
\begin{equation*}
	\phi(\eta) = \frac{\rbar[T](\eta)}{\sqrt{\Gcoef L^2 T}}.
\end{equation*}
Our analysis applies to this modified bisection target as well, but  with $\G(\eta)$ replaced by $L^2T$ throughout. Moreover, this modification allows us to slightly improve two parts of the analysis.

First, we may sharpen the bound on $\etamax$ in \Cref{lem:eta-max,lem:stochastic-eta-max} to $\etamax = O\prn*{\frac{\d[0]}{L\sqrt{T}}  }$, improving our bound on the maximum value of $k$ used in \Cref{alg:main}. 
In the deterministic case, this allows us to establish optimality gap bounds scaling as $\varepsilon + \lambda' \frac{d_0 L}{\sqrt{B}}$ for $\lambda' = O\prn[\Big]{\sqrt{\log\log_+ \frac{d_0 L}{\varepsilon \sqrt{B}}}}$ which satisfies $\lambda'=O(1)$ for $\varepsilon = \frac{d_0 L}{\sqrt{B}}$, similarly to the bounds of previous works, as discussed at the end of \Cref{sec:stochastic}.
	
Second, in the stochastic setting, we may use Blackwell's inequality \cite{blackwell1997large} instead of the time uniform empirical Bernstein. This allows us to take $\GcoefK = 2k +
 O\prn*{\log\prn*{\frac{1}{\delta}\log B}}$, eliminating the additive square logarithmic term stemming from $\CcoefK$ in~\eqref{eq:coef-k-setting}. Consequently, in the stochastic setting we obtain a probability $1-\delta$ optimality gap bound of $\varepsilon + \lambda'' \frac{d_0 L}{\sqrt{B}}$ for $\lambda'' = O\prn[\Big]{{\lambda'\brk[\big]{\lambda' + \sqrt{\log\prn*{\frac{1}{\delta}\log B}} } } }$, with $\lambda'$ defined above. Therefore, in the stochastic case we do not remove $\log\log B$ term entirely, even for the optimal $\varepsilon$. The source of the remaining $\log\log B$ is the union bound we use in the proof of \Cref{prop:concentration-single-eta}, which might be removable via a more careful probabilistic argument.

\end{document}